\documentclass{article}

\usepackage{amsmath}
\usepackage{amsfonts}

\usepackage{stmaryrd} 

\usepackage{bussproofs}

\usepackage{tikz-cd}

\usepackage{hyperref}

\usepackage{comment}

\newtheorem{Theorem}{Theorem}

\newtheorem{theorem}[Theorem]{Theorem}
\newtheorem{definition}[Theorem]{Definition}
\newtheorem{lemma}[Theorem]{Lemma}

\newtheorem{corollary}[Theorem]{Corollary}

\newenvironment{proof}{\noindent{\bf Proof:}}{\medskip}

\def\squareforqed{\hbox{\rlap{$\sqcap$}$\sqcup$}}
\def\qed{\ifmmode\squareforqed\else{\unskip\nobreak\hfil
\penalty50\hskip1em\null\nobreak\hfil\squareforqed
\parfillskip=0pt\finalhyphendemerits=0\endgraf}\fi}

\def\cal#1{{\mathcal #1}}

\def\bpi#1{\mathbf{\Pi}^0_{#1}}

\def\lpi#1{{\Pi}^0_{#1}}

\def\sierp{{\mathbb S}}

\def\IF{{\mathbb F}}
\def\IN{{\mathbb N}}

\def\PL{{\mathrm{\bold A}}}
\def\PU{{\mathrm{\bold K}}}
\def\PO{{\mathrm{\bold O}}}

\def\calPfin{{\mathcal P}_{\mathrm{fin}}}

\def\calC{{\cal C}}
\def\calD{{\cal D}}
\def\calE{{\cal E}}
\def\calF{{\cal F}}
\def\calG{{\cal G}}
\def\calH{{\cal H}}
\def\calI{{\cal I}}

\def\I#1{{{\mathrm{\mathbf I}}(#1)}}

\newcommand{\ACA}{\mathsf{ACA}_0}

\newcommand{\ODQPol}{\mathsf{ODS}}
\newcommand{\CHQPol}{\mathsf{CHS}}
\newcommand{\ZCHQPol}{\mathsf{ZCHS}}
\newcommand{\QPol}{\mathsf{QPol}}
\def\QCB{{{\bf QCB}_0}}

\newcommand{\Alg}{\mathsf{Alg}}
\newcommand{\BA}{\mathsf{BA}}

\newcommand{\pt}{\mathsf{pt}}

\newcommand{\Sort}{\mathsf{S}}
\newcommand{\dualSort}{{\mathsf{S}^*}}

\newcommand{\Mor}{\mathsf{Mor}}
\newcommand{\Obj}{\mathsf{Obj}}

\def\src{{\sigma}}
\def\tar{{\tau}}
\def\id{{\iota}}

\newcommand{\uparw}{{\uparrow}}
\newcommand{\dnarw}{{\downarrow}}

\def\int{{\mathrm{\mathbf{int}}}}

\def\wayabovearrow{\rlap{\raise-.25ex\hbox{$\shortuparrow$}}\raise.25ex\hbox{$\shortuparrow$}}
\def\waybelowarrow{\rlap{\raise.25ex\hbox{$\shortdownarrow$}}\raise-.25ex\hbox{$\shortdownarrow$}}

\begin{document}


\title{Effective quasi-Polish categories of overt discrete spaces and compact Hausdorff spaces}

\author{Matthew de Brecht}

\date{\small{Graduate School of Human and Environmental Studies\\ Kyoto University, Japan\\\texttt{matthew@i.h.kyoto-u.ac.jp}}}

\maketitle

\begin{abstract}
We construct the category of overt discrete quasi-Polish spaces and the category of compact Hausdorff quasi-Polish spaces (and some of their subcategories) as internal categories of the category of effective quasi-Polish spaces and computable maps.  

To demonstrate that the constructions are natural, we show that Stone duality is computable, in the sense that the dual contravariant functors and the natural transformations demonstrating their adjointness are computable.
\end{abstract}

\setcounter{tocdepth}{2}
\tableofcontents


\section{Introduction}

In this paper, we construct the category $\ODQPol$ of overt discrete quasi-Polish spaces and the category $\CHQPol$ of compact Hausdorff quasi-Polish spaces\footnote{Classically, compact Hausdorff quasi-Polish spaces are Polish, but the computable points of $\CHQPol$ are not necessarily computable Polish spaces. See the discussion at the end of Section~\ref{sec:CHS}.} as effective quasi-Polish categories (i.e, computable internal categories of $\QPol$, the category of quasi-Polish spaces \cite{dbr}). To demonstrate that the constructions are natural, we show that Stone duality is computable, in the sense that the dual contravariant functors and the natural transformations demonstrating their adjointness are computable. Computability is defined according to the Type Two Theory of Effectivity (TTE) \cite{W00}.

In \cite{dbr21b} we constructed $\QPol$ as a represented space \cite{P16} and showed that products, equalizers, and several powerspace endofunctors on $\QPol$ were computable. Our construction of $\QPol$ itself was clearly not quasi-Polish, in particular the represented space of morphisms was co-analytic complete. $\ODQPol$, $\CHQPol$, and several other categories in this paper provide concrete examples of important subcategories of $\QPol$ that can be represented as internal categories of $\QPol$. 

To formalize Stone duality, we construct the effective quasi-Polish category $\ZCHQPol$ of zero-dimensional compact Hausdorff quasi-Polish spaces as a subcategory of $\CHQPol$, and the effective quasi-Polish categories $\BA(\ODQPol)$ and $\BA(\ZCHQPol)$ of Boolean algebra objects in $\ODQPol$ and $\ZCHQPol$, respectively. The computable Stone duality result is that $\BA(\ODQPol)$ is computably dually equivalent to $\ZCHQPol$, but we also prove the dual Stone duality result that $\BA(\ZCHQPol)$ is computably dually equivalent to $\ODQPol$.

The monads induced by the dual adjunction between $\ODQPol$ and $\ZCHQPol$ determine effective quasi-Polish Eilenberg-Moore categories $\Alg(\ODQPol)$ and $\Alg(\ZCHQPol)$, which we show are computably isomorphic to $\BA(\ODQPol)$ and $\BA(\ZCHQPol)$, respectively. The characterizations as Eilenberg-Moore categories are more suitable for a high level treatment of the duality (see Section~VI.4 of \cite{J82}), but the characterizations as Boolean algebra objects are more suitable for proving the existence of certain morphisms.

Computable versions of Stone duality have been investigated by M.~Harrison-Trainor, A.~Melnikov, and K.M.~Ng in \cite{HMK20} and by N.~Bazhenov, M.~Harrison-Trainor, and A.~Melnikov in \cite{BHM23} (see also the much earlier work by S.~Odintsov and V.~Selivanov \cite{OS89} that precisely characterizes effectively presented Boolean algebras in terms of algebras of clopen subsets of paths through computable trees). Since there are multiple ways of effectivizing the definitions of Boolean algebras and Stone spaces, there are multiple versions of computable Stone duality. The duality in \cite{HMK20} is for computable Boolean algebras (which have decideable equality) and Stone spaces presented by a computable metric and a dense sequence of points. The duality in \cite{BHM23} is for c.e. presented Boolean algebras  (which only have semidecideable equality) and Stone spaces presented by a right-c.e. metric and a dense sequence of points. 

The computable objects of $\BA(\ODQPol)$ are the c.e. presented Boolean algebras (essentially by definition), and the computable objects of $\ZCHQPol$ are computably isomorphic to $\lpi 1$-subspaces of Cantor space (see Lemma~\ref{lem:zdim} in this paper, and compare with Lemma~3 in \cite{OS89}). Therefore, the computable duality between $\BA(\ODQPol)$ and $\ZCHQPol$ in this paper can be viewed as a uniform version of the duality in \cite{BHM23}, although we leave it as an open problem whether it is possible to computably assign (relativized) right-c.e. metrics to all of the objects in $\ZCHQPol$. 

To the best of our knowledge, the computable duality between $\BA(\ZCHQPol)$ and $\ODQPol$ is new. The classical result, however, is well known (see Example (b) of VI-4.6 in \cite{J82}).

The overt-compact and discrete-Hausdorff dualities are studied in detail in P.~Taylor's Abstract Stone Duality (ASD) \cite{Taylor00,Taylor02} and M.~Escard\'{o}'s synthetic topology \cite{ME04}. Our high level treatment of Stone duality closely follows \cite{Taylor00}, in particular our definitions of sobriety and spatiality and our use of a restricted $\lambda$-calculus (see Section~\ref{sec:lambdacalculus}). There are technical differences between the $\lambda$-calculus we introduce and the one in ASD, however, because our motivation for introducing the $\lambda$-calculus is to define computable morphisms in $\ODQPol$ and $\CHQPol$, and so we distinguish between the two categories in our calculus. The $\lambda$-calculus in ASD, on the other hand, assumes a single category of locally compact (or more general) ``spaces''. We leave it as an open problem whether there is an effective quasi-Polish category of locally compact spaces that is a model of ASD. 

Our construction of the quasi-Polish categories $\ODQPol$ and $\BA(\ODQPol)$ are essentially special cases of R.~Chen's construction of quasi-Polish groupoids of countable models for $\sigma$-coherent theories (\cite{Chen19,Chen25}), applied to the empty theory and the theory of Boolean algebras, respectively\footnote{R.~Chen's original construction in \cite{Chen19} included the stronger assumption that equality is decideable (i.e. they are overt discrete \emph{Hausdorff} spaces), but this was relaxed in \cite{Chen25} (see Example~6.9 of that paper). }. The main difference is that our space of morphisms includes all homomorphisms (a category), whereas R.~Chen only includes the isomorphisms (a groupoid). This difference is further investigated in \cite{dbr26b}, where it is shown that the category of models is the ``soberification'' of the groupoid of models in a precise sense. See \cite{dbr26} and \cite{dbr26b} for more applications of $\ODQPol$ in connection to R.~Chen's strong conceptual completeness result for $\sigma$-coherent logic.

The computable objects of $\ODQPol$ are the computably overt computably discrete effective quasi-Polish spaces. These spaces are studied by E.~Neumann, A.~Pauly, C.~Pradic, and M.~Valenti in \cite{NPPV25}, where among other results they characterize them as the spaces of equivalence classes of computably enumerable equivalence relations (ceers; see \cite{GG01}). This is closely related to our construction of $\ODQPol$ using partial equivalence relations (Section~\ref{sec:ODS}).

In \cite{FNPPV25}, J.~Franklin, E.~Neumann, A.~Pauly, C.~Pradic, and M.~Valenti create represented spaces of Polish spaces and compact Polish spaces, where they parameterize spaces using bundles over hyperspaces. They use these constructions to define and study computable categoricity and genericity of compact Polish spaces (in particular, they characterize Cantor space as the unique $\Pi^0_2$-generic compact Polish space). Their representation of compact Polish spaces uses the hyperspace of overt-compact subsets of the Hilbert cube, which has similarities (minus the overtness requirement) to our construction of the space of objects of $\CHQPol$. We leave it as an open problem to clarify the relationship between the two approaches.

It is possible to formalize a large part of the theory of quasi-Polish spaces within second order arithmetic by encoding quasi-Polish spaces as spaces of ideals of countable transitive relations \cite{DPS,dbr20}, and then applying techniques from \cite{mummert:phd} for the reverse mathematics of general topology. We expect that $\ACA$ \cite{Simpson09} should be sufficient for most (if not all) of the results in this paper. In some places we will give hints for formalizing results in $\ACA$, but we will leave a more careful analysis for future work.

The structure of the paper is as follows. After some preliminaries in Section~\ref{sec:preliminaries}, we construct $\ODQPol$ in Section~\ref{sec:ODS}, $\CHQPol$ in Section~\ref{sec:CHS}, and $\ZCHQPol$ in Section~\ref{sec:ZCHS}. We show how to compute finite products and equalizers (uniformly) for these categories in Section~\ref{sec:finlim}. Computable contravariant functors between $\ODQPol$ and $\CHQPol$ are defined in Section~\ref{sec:dualfunctors}, and shown to be computably adjoint. Intuitively, these functors map a space $X$ to the space $2^X$ of all continuous functions from $X$ to the two point discrete space $2$, and they restrict to adjoint functors between $\ODQPol$ and $\ZCHQPol$. Section~\ref{sec:monad} constructs the Eilenberg-Moore categories $\Alg(\ODQPol)$ and $\Alg(\ZCHQPol)$ determined by the induced monads. In Section~\ref{sec:boolalgebra}, we take a different starting point and construct the subcategories of Boolean algebra objects $\BA(\ODQPol)$ and $\BA(\ZCHQPol)$, and prove the existence of enough Boolean algebra homomorphisms to separate points. In Section~\ref{sec:lambdacalculus} we introduce a restricted $\lambda$-calculus for defining and reasoning with morphisms in each category. With the help of the restricted $\lambda$-calculus, we show in Section~\ref{sec:algebras_equiv} that the Eilenberg-Moore categories are computably isomorphic to the categories of Boolean algebra objects. In Section~\ref{sec:StoneDuality}, we use the machinery developed to give relatively simple high-level proofs of the two computable Stone dualities (the usual duality between $\BA(\ODQPol)$ and $\ZCHQPol$, and the second duality between $\BA(\ZCHQPol)$ and $\ODQPol$).

\section{Preliminaries}\label{sec:preliminaries}

We will mainly work within the category of quasi-Polish spaces \cite{dbr}, since it allows straightforward definitions of computability and an easy translation into second order arithmetic \cite{dbr20}. The notion of computability we use coincides with the Type Two Theory of Effectivity (TTE) \cite{W00}. We leave generalizations to larger categories of locales, $\QCB$-spaces \cite{BSS07}, and represented spaces \cite{P16} for future research. 

The Sierpinski space is denoted $\sierp = \{\bot,\top\}$, with $\{\top\}$ open but not closed. We often view (semi-decideable) propositions as having values in Sierpinski space, with $\top$ denoting true and $\bot$ denoting false.

We next give a brief and high level introduction to overtness, compactness, discreteness, and the Hausdorff separation axiom. Our discussion follows the approach of \cite{ME04} and \cite{P16}. Although the definitions we give appear different than the classical definitions, they agree for quasi-Polish spaces.

Given a quasi-Polish space $X$ we equip the spaces $\sierp^X$ and $\sierp^{\sierp^X}$ of continuous functions with the Scott-topologies, which are equivalent to the compact-open topologies since $X$ is quasi-Polish. Note that $\sierp^X$ is quasi-Polish if and only if $X$ is locally compact, but we can treat these spaces as locales within the category of quasi-Polish spaces because their frame $\sierp^{\sierp^X}$ is always quasi-Polish when $X$ is (see \cite{DK19}).

A space $X$ is \emph{(computably) overt} if there is a (computable) continuous function $\exists_X \colon \sierp^X\to\sierp$ such that $\exists_X(U)=\bot \iff U=\emptyset$. $X$ is \emph{ (computably) compact} if there is a (computable) continuous function $\forall_X \colon \sierp^X\to\sierp$ such that $\forall_X(U)=\top \iff U=X$. $X$ is \emph{(computably) discrete} if the ``equals'' function $=\colon X\times X\to\sierp$ is continuous (and computable). $X$ is \emph{(computably) Hausdorff} if the ``not equals'' function $\ne\colon X\times X \to \sierp$ is continuous (and computable).

Given a quasi-Polish space $X$, we write $\PL(X)$ for the lower powerspace of $X$ and $\PU(X)$ for the upper powerspace of $X$ (see \cite{DK19}). The lower powerspace of $X$ is the set of closed (overt) subsets of $X$ with the lower Vietoris topology, which is generated by sets of the form $\Diamond U = \{ A\in\PL(X) \mid A\cap U\not=\emptyset\}$ for $U\in\PO(X)$. $\PL(X)$ is homeomorphic to the subspace of $\sierp^{\sierp^X}$ of all continuous functions $\exists_A\colon \sierp^X\to\sierp$ that preserve finite joins. Given $\exists_A\colon \sierp^X\to\sierp$, the set $\calI = \{ U\in\sierp^X \mid \exists_A(U) = \bot\}$ is a closed ideal in $\sierp^X$ hence the join $V = \bigvee\calI$ is in $\calI$, so the complement $A=X\setminus V$ is the unique closed overt subset of $X$ satisfying $\exists_A(U)=\top \iff U\not\in\calI \iff A\cap U\not=\emptyset$. The upper powerspace of $X$ is the set of saturated compact subsets of $X$ with the upper Vietoris topology, which is generated by sets of the form $\Box U = \{ K\in\PU(X) \mid K\subseteq U\}$ for $U\in\PO(X)$. $\PU(X)$ is homeomorphic to the subspace of $\sierp^{\sierp^X}$ of all continuous functions $\forall_K\colon \sierp^X\to\sierp$ that preserve finite meets. Given $\forall_K\colon \sierp^X\to\sierp$, the set $\calF = \{ U\in\sierp^X \mid \forall_K(U) = \top\}$ is an open filter in $\sierp^X$, so the sobriety of $X$ implies there is a unique saturated compact subset $K$ of $X$ such that $\forall_K(U)=\top\iff U\in\calF \iff K\subseteq U$. 

An alternative and computably equivalent characterization of the lower and upper powerspaces of a quasi-Polish space can be given in terms of spaces of ideals \cite{DPS,dbr20}. First, we recall the space of ideals characterization. Given a transitive relation $\prec$ on $\IN$, a subset $I\subseteq \IN$ is an \emph{ideal} if and only if:
\begin{enumerate}
\item
$I \not=\emptyset$,\hfill (\emph{$I$ is non-empty})
\item
$(\forall a \in I) (\forall b \in \IN)\, (b \prec a \Rightarrow b \in I)$,\hfill (\emph{$I$ is a lower set})
\item
$(\forall a,b \in I)(\exists c\in I)\, (a \prec c  \,\&\,  b \prec c)$.\hfill (\emph{$I$ is directed})
\end{enumerate}
The collection $\I{\prec}$ of all ideals has the topology generated by basic open sets of the form $[a]_{\prec} = \{ I \in \I{\prec} \mid a \in I\}$. It was shown in \cite{DPS} that a space is (effective) quasi-Polish if and only if it is (computably) homeomorphic to a (computably enumerable) transitve relation on $\IN$. 

Given a transitive relation $\prec$ on $\IN$, define the transitive relations $\prec_L$ and $\prec_U$ on $\calPfin(\IN)$ (the set of finite subsets of $\IN$) as:
\begin{eqnarray*}
F \prec_L G &\iff& (\forall m\in F)\,(\exists n\in G)\, m \prec n\\
F \prec_U G &\iff&(\forall n\in G)\,(\exists m\in F)\, m \prec n.
\end{eqnarray*}
It was shown in \cite{dbr20} that $\PL(\I{\prec})$ is (computably) homeomorphic to $\I{\prec_L}$, and $\PU(\I{\prec})$ is (computably) homeomorphic to $\I{\prec_U}$. This correspondence extends to a computable functor on the category of quasi-Polish spaces \cite{dbr21b}. Intuitively, $F\in \calPfin(\IN)$ encodes the basic open set $\bigcap_{n\in F}\Diamond [n]_\prec$ of $\PL(\I{\prec})$ and the basic open set $\Box\bigcup_{n\in F} [n]_\prec$ of $\PU(\I{\prec})$.

In general, if $X$ is compact and $\varphi\colon X\times Y \to \sierp$ is continuous, then taking the transpose $\hat{\varphi}\colon Y\to\sierp^X$ and composing with $\forall_X$ yields a continuous function $\forall_X\circ \hat{\varphi} \colon Y\to\sierp$ that classifies the open subset of all $y\in Y$ satisfying $(\forall x\in X)\,\varphi(x,y)$. As an example, if $X$ is Hausdorff and $K\in\PU(X)$, then composing the transpose of $\ne$ with $\forall_K$ yields the function $\lambda x:X.\forall_K(\lambda y:X. x\ne y)$ from $X$ to $\sierp$ which maps $x$ to $\top$ if and only if $x\not\in K$, hence $K$ is closed.

\subsection{Topological categories}

A \emph{topological category} is an internal category in the category of topological spaces. More concretely, a topological category is a tuple $\calC = (\calC_\Obj, \calC_\Mor, \src,\tar,\id,\circ)$ consisting of the following data:
\begin{itemize}
\item
$\calC_\Obj$ (objects) and $\calC_\Mor$ (morphisms) are topological spaces.
\item
$\src\colon \calC_\Mor\to\calC_\Obj$ (source), $\tar\colon \calC_\Mor\to\calC_\Obj$ (target), and $\id\colon \calC_\Obj\to\calC_\Mor$ (identity) are continuous functions.
\item
$\circ:\subseteq \calC_\Mor\times\calC_\Mor\to\calC_\Mor$ (composition) is a partial continuous function with domain
\[dom(\circ) = \{ \langle g,f \rangle \in \calC_\Mor\times\calC_\Mor \mid \src(g) = \tar(f)\}\]
\end{itemize}
subject to the following:
\begin{itemize}
\item
$\src(g\circ f) = \src(f)$ and $\tar(g\circ f) = \tar(g)$,
\item
$\src(\id(a)) = a$ and $\tar(\id(a)) = a$,
\item
$(h\circ g)\circ f = h \circ (g\circ f)$ when the compositions $h\circ g$ and $g\circ f$ are defined,
\item
if $\src(f) = a$ and $\tar(f) = b$ then $\id(b)\circ f = f = f\circ \id(a)$.
\end{itemize}
See \cite{Chen19} for related work on topological groupoids. See Chapter~7 of \cite{J99} for more on internal categories.

A \emph{quasi-Polish category} is a topological category whose space of morphisms is a quasi-Polish space. Since the space of objects of a topological category is a retract of its space of morphisms, the space of objects of a quasi-Polish category is also a quasi-Polish space. An \emph{effective quasi-Polish category} is a quasi-Polish category whose space of morphisms is an effective quasi-Polish space and the functions $\sigma,\tar,\iota,\circ$ are all computable. This paper will mainly be concerned with effective quasi-Polish categories.

\subsection{Continuous functors and natural transformations}

A \emph{continuous (computable) functor}\footnote{In this paper, ``continuous'' is always meant in the topological sense, not in the categorical sense of preserving limits.} from $\calC= (\calC_\Obj, \calC_\Mor, \src_\calC,\tar_\calC,\id_\calC,\circ_\calC)$ to $\calD= (\calD_\Obj, \calD_\Mor, \src_\calD,\tar_\calD,\id_\calD,\circ_\calD)$ is a pair $F = (F_{\Obj}, F_{\Mor})$ of continuous (computable) functions $F_{\Obj} \colon \calC_\Obj\to\calD_\Obj$ and $F_{\Mor} \colon \calC_\Mor\to\calD_\Mor$ satisfying
\begin{enumerate}
\item
$F_{\Obj}\circ \src_\calC = \src_\calD \circ F_{\Mor}$,
\item
$F_{\Obj}\circ \tar_\calC = \tar_\calD \circ F_{\Mor}$,
\item
$F_{\Mor}\circ \id_\calC = \id_\calD \circ F_{\Obj}$, and
\item
$F_{\Mor}(g\circ_\calC f) = F_{\Mor}(g) \circ_\calD F_{\Mor}(f)$ for all composable $f,g\in\calC_\Mor$.
\end{enumerate}
A continuous functor $F$ from $\calC$ to $\calD$ will be denoted by $F\colon \calC\to\calD$. The identity functor $1_\calC\colon\calC\to\calC$ (which is the identity on objects and morphisms) is clearly computable. Furthermore, if $F\colon\calC\to\calD$ and $G\colon \calD\to\calE$ are (computable) functors, then so is the composition $G\circ F \colon \calC\to\calE$.

Continuous (computable) \emph{contravariant} functors are defined similarly.

Given continuous (computable) functors $F,G\colon \calC\to\calD$, a \emph{continuous (computable) natural transformation} from $F$ to $G$ is given by a continuous (computable) function $\eta\colon \calC_\Obj \to \calD_\Mor$ satisfying:
\begin{itemize}
\item
$\src_{\calD}\circ\eta = F_{\Obj}$ and $\tar_{\calD}\circ \eta = G_{\Obj}$ 

(i.e., $\eta(C) \colon F_{\Obj}(C) \to G_{\Obj}(C)$ for each $C\in\Obj_\calC$), 
\item
 $\eta(\tar_\calC(f)) \circ_\calD F_{\Mor}(f) = G_{\Mor}(f) \circ_\calD \eta(\src_\calC(f))$ for each $f\in\calC_\Mor$.
\end{itemize}
Usually we will write $\eta\colon F\to G$ if it will not cause any confusion.

\section{The category of overt discrete quasi-Polish spaces ($\ODQPol$)}\label{sec:ODS}

This section introduces $\ODQPol$, the effective quasi-Polish category of overt discrete quasi-Polish spaces. See \cite{dbr26} for applications of $\ODQPol$ to computable \'{e}tale spaces.

Let $\PL(\cdot)$ be the lower powerspace monad (see Section~3 of \cite{DK19}).  Recall that a symmetric transitive relation is called a \emph{partial equivalence relation (PER)}. The following definition should be compared with Section~4 of \cite{Chen19}.

\begin{definition}\label{def:ODS}
Define the category $\ODQPol$ as follows:
\begin{itemize}
\item
$\ODQPol_\Obj = \{ \equiv \in \PL(\IN\times\IN)\mid \text{ $\equiv$ is a PER}\}$.
\item
$\ODQPol_\Mor \subseteq \PL(\IN\times \IN) \times \ODQPol_\Obj \times \ODQPol_\Obj$ is the subspace of all tuples $\langle G, \equiv_\src, \equiv_\tar\rangle$  satisfying (for all $a,a',b,b'\in\IN$):
\begin{enumerate}
\item
$G(a,b)$ implies $a\equiv_\src a \,\&\, b\equiv_\tar b$.
\item
$G(a,b) \,\&\, a\equiv_\src a'$ implies $G( a',b)$.
\item
$G(a,b) \,\&\, b\equiv_\tar b'$ implies $G(a,b')$.
\item
$G(a,b) \,\&\, G(a,b')$ implies $b\equiv_\tar b'$.
\item
$a\equiv_\src a$ implies $(\exists b\in\IN)\, G(a,b)$.
\end{enumerate}
\item
$\src(\langle G, \equiv_\src, \equiv_\tar\rangle)= \equiv_\src$
\item
$\tar(\langle G, \equiv_\src, \equiv_\tar\rangle)=\equiv_\tar$
\item
$\id(\equiv)=\langle \equiv, \equiv, \equiv\rangle$.
\item
Composition $\circ$ is defined as
\[\langle G, \equiv_\rho, \equiv_\tar\rangle \circ \langle F, \equiv_\src, \equiv_\rho\rangle = \langle (G\circ F) , \equiv_\src, \equiv_\tar\rangle,\]
where
\[(G\circ F)(a,c) \iff  (\exists b\in \IN)[F(a,b) \,\&\, G(b,c)],\]
which is easily seen to be well-defined and continuous (even computable).
\end{itemize}
\qed
\end{definition}
Technically, $\ODQPol_\Obj$ and $\ODQPol_\Mor$ cannot be constructed as sets in $\ACA$. Instead, using methods similar to \cite{mummert:phd} for formalizing general topology within second-order arithmetic, we can construct within $\ACA$ transitive relations $\prec_{\ODQPol_\Obj}$ and $\prec_{\ODQPol_\Mor}$ on $\IN$ such that $\ODQPol_\Obj$ and $\ODQPol_\Mor$ are (computably) homeomorphic to their respective spaces of ideals. In detail, the overt discrete space $\IN$ of natural numbers is encoded as the space of ideals $\I{=_\IN}$, where $=_\IN$ is the equality relation on the set of natural numbers. Computable constructions for products and the lower powerspace $\PL(\cdot)$ are given in \cite{dbr20,dbr21b}. Computable construction of transitive relations encoding $\lpi 2$ subsets of quasi-Polish spaces was first shown in Theorem~12 of \cite{DPS}, but see also Theorem~3 of \cite{dbr20} for a direct construction that can be done within $\ACA$. C.e. codes (as defined in \cite{dbr20}) for the functions $\src$, $\tar$, $\id$, and $\circ$ can also be constructed in $\ACA$. We obtain the following.

\begin{theorem}
$\ODQPol$ is an effective quasi-Polish category.
\qed
\end{theorem}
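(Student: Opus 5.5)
The plan is to verify, in order, that $\ODQPol_\Mor$ is an effective quasi-Polish space and that $\src,\tar,\id,\circ$ are computable with the correct codomains. The categorical axioms will then follow from elementary relational reasoning.

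For the spaces, I start from $\IN\times\IN$, which is overt discrete and computably homeomorphic to $\I{=_\IN}$. By \cite{dbr20,dbr21b}, $\PL(\IN\times\IN)$ is an effective quasi-Polish space, and its topology makes ``$(a,b)\in R$'' an open (c.e.) condition uniformly in $a,b$. I then show that each defining condition cuts out a $\lpi 2$ subset, using Theorem~12 of \cite{DPS} (or Theorem~3 of \cite{dbr20}): $\lpi 2$ subspaces of effective quasi-Polish spaces are effective quasi-Polish.
\begin{itemize}
\item Symmetry of $\equiv$ is $\forall a,b\,[(a,b)\in{\equiv}\Rightarrow(b,a)\in{\equiv}]$, a countable intersection of sets of the form ``open implies open''. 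Each such set is $\lpi 2$, uniformly computably.
\item Transitivity is $\forall a,b,c\,[(a\equiv b \,\&\, b\equiv c)\Rightarrow a\equiv c]$. Its antecedent is a finite meet of opens, so the same argument applies.
\item For morphisms, I work in the effective quasi-Polish space $\PL(\IN\times\IN)\times\ODQPol_\Obj\times\ODQPol_\Obj$. Conditions 1 through 4 are countable conjunctions of ``open $\Rightarrow$ open'' statements, because $G(a,b)$, $a\equiv_\src a'$ and so on are all open.
\item Condition 5 has the form ``open $\Rightarrow$ countable union of opens''. That is again open $\Rightarrow$ open, hence $\lpi 2$.
\end{itemize}
So $\ODQPol_\Mor$ is a $\lpi 2$ subspace, with codes obtained uniformly. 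Since $\ODQPol_\Obj$ is a computable retract via $\id$ and $\src$, it is quasi-Polish too, though this also follows directly.

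Computability of $\src$ and $\tar$ is immediate, since they are projections. For $\id$, I need $\equiv\mapsto\langle\equiv,\equiv,\equiv\rangle$ to land in $\ODQPol_\Mor$:
\begin{itemize}
\item Conditions 1 through 4 follow from symmetry and transitivity.
\item Condition 5 holds by taking $b=a$.
\end{itemize}
For $\circ$, the map $(F,G)\mapsto G\circ F$ on $\PL(\IN\times\IN)^2$ is computable. Indeed, $(a,c)\in G\circ F$ is the c.e. union over $b$ of $(a,b)\in F\wedge(b,c)\in G$, and this yields an element of $\PL$. Here I use that $\PL(\IN\times\IN)$ is just the powerset with the Scott topology, since every subset of a discrete space is closed. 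Computability then follows from the functoriality and product results of \cite{dbr21b}, or directly by enumeration. The domain of $\circ$ is the equalizer of $\src$ and $\tar$ on the relevant factors.

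It remains to check that $G\circ F$ satisfies conditions 1 through 5.
\begin{itemize}
\item Condition 1 is inherited from $F$ and $G$.
\item Conditions 2 and 3 follow by closing under $\equiv_\src$ and $\equiv_\tar$ through $F$ and $G$.
\item Condition 4: from $F(a,b)$ and $F(a,b')$ we get $b\equiv_\rho b'$. Then $G(b',c')$ and condition 2 give $G(b,c')$, so $c\equiv_\tar c'$.
\item Condition 5 composes the witnesses for $F$ and $G$.
\end{itemize}
Associativity is associativity of relational composition. The unit laws $G\circ{\equiv_\src}=G$ and ${\equiv_\tar}\circ G=G$ use conditions 1 through 3. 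The main obstacle is only bookkeeping: making the $\lpi 2$ encodings and the c.e. codes uniform and effective. I would handle this by citing the uniform constructions of \cite{dbr20}, rather than recomputing transitive relations by hand.
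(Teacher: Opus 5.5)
Your proposal is correct and follows essentially the same route as the paper. Both present $\ODQPol_\Obj$ and $\ODQPol_\Mor$ as $\lpi 2$ subspaces of (products of) $\PL(\IN\times\IN)$, invoke Theorem~12 of \cite{DPS} or Theorem~3 of \cite{dbr20} to conclude they are effective quasi-Polish, and note that $\src,\tar,\id,\circ$ have c.e.\ codes; the paper only sketches this, and your explicit checks (each axiom has the form ``open $\Rightarrow$ open'', composition preserves conditions 1--5, and the unit and associativity laws hold) supply the details it leaves to the reader.
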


Since each point of $\ODQPol_\Mor$ (i.e., each ideal of $\prec_{\ODQPol_\Mor}$) encodes a function between overt discrete spaces, the multiple levels of encoding quickly becomes unmanageable. Therefore, in the following we will treat $\ODQPol_\Obj$ and $\ODQPol_\Mor$ as usual topological spaces, and leave the details of their encodings to the reader. Instead, we will focus on how their points are used to encode spaces and continuous functions. 

The following lemma shows that if we restrict the space of morphisms $\ODQPol_\Mor$ to only contain isomorphisms, then we obtain an effective quasi-Polish groupoid. See \cite{Chen19,Chen25} for more on quasi-Polish groupoids and their applications to logic.

\begin{lemma}\label{lem:odqpol_iso_pi2}
The subspace of $\ODQPol_\Mor$ of isomorphisms is $\lpi 2$.
\end{lemma}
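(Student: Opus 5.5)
Our approach is to characterize isomorphisms by a condition that uses only the relation $G$ and the two PERs, and then read off its complexity. A morphism $\langle G,\equiv_\src,\equiv_\tar\rangle$ encodes a function $f$ from the space $X_\src$ of $\equiv_\src$-classes to the space $X_\tar$ of $\equiv_\tar$-classes. We claim it is an isomorphism in $\ODQPol$ if and only if $f$ is a bijection, i.e.\ $G$ also satisfies the converse versions of conditions 4 and 5:
\begin{itemize}
\item[(4')] $G(a,b)\,\&\,G(a',b)$ implies $a\equiv_\src a'$;
\item[(5')] $b\equiv_\tar b$ implies $(\exists a\in\IN)\,G(a,b)$.
\end{itemize}
Indeed, if (4') and (5') hold, then the transposed relation $G^{op}(b,a)\iff G(a,b)$ satisfies conditions 1--5 with $\src$ and $\tar$ swapped. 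So $\langle G^{op},\equiv_\tar,\equiv_\src\rangle\in\ODQPol_\Mor$. Using conditions 2--4 and (4'), one checks that $G^{op}\circ G$ equals $\equiv_\src$: if $G(a,b)$ and $G(a',b)$ then $a\equiv_\src a'$, and conversely if $a\equiv_\src a'$ then condition 5 gives some $b$ with $G(a,b)$, and condition 2 gives $G(a',b)$. Symmetrically, $G\circ G^{op}$ equals $\equiv_\tar$, using (5') and condition 3. Hence $G^{op}$ is a two-sided inverse.

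Conversely, suppose $\langle H,\equiv_\tar,\equiv_\src\rangle$ is an inverse. Then $H\circ G={\equiv_\src}$ and $G\circ H={\equiv_\tar}$. From $G\circ H={\equiv_\tar}$ and $b\equiv_\tar b$ we get some $a$ with $H(b,a)$ and $G(a,b)$, which is (5'). For (4'), take $G(a,b)$ and $G(a',b)$. Condition 5 for $H$ gives $c$ with $H(b,c)$, so $(H\circ G)(a,c)$ and $(H\circ G)(a',c)$ hold. Hence $a\equiv_\src c\equiv_\src a'$.

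It remains to bound the complexity. Condition (4') is a universal statement, over all $a,a',b$, of the implication ``$G(a,b)\wedge G(a',b)\Rightarrow a\equiv_\src a'$''. On $\PL(\IN\times\IN)\times\ODQPol_\Obj$ the predicates $G(a,b)$ and $a\equiv_\src a'$ are open; concretely they are the sets $\Diamond[\langle a,b\rangle]$. So each instance is (open $\Rightarrow$ open), which is $\lpi 2$, and a countable intersection of such sets is $\lpi 2$. Condition (5') is likewise ``open $\Rightarrow$ (countable union of opens)'' for each $b$, hence again $\lpi 2$, and its countable intersection over $b$ is $\lpi 2$. All these sets are uniformly computable in the indices, so the intersection is an effective $\lpi 2$ subset of $\ODQPol_\Mor$.

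The only delicate step is the converse direction, that is, checking that the existence of an inverse forces (4') and (5'). We handle it with the composition computations above, using condition 5 for $H$ and condition 4 for $G$. A byproduct is that the inverse is computable from $G$ (it is $G^{op}$), so the subspace of isomorphisms, as an effective $\lpi 2$ subspace, is again effective quasi-Polish and carries a computable inversion map.
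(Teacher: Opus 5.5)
Your proof is correct and uses the same key characterization as the paper: $g$ is an isomorphism iff $\langle G^{op},\equiv_\tar,\equiv_\src\rangle\in\ODQPol_\Mor$, which, given $g\in\ODQPol_\Mor$, amounts exactly to your conditions (4') and (5'). The difference is in how the $\lpi 2$ bound is obtained. The paper pulls back the known $\lpi 2$ set $\ODQPol_\Mor$ along the computable swap map $(-)^{op}$, while you verify directly that (4') and (5') are countable intersections of ``open $\Rightarrow$ open'' conditions, and you also prove both directions of the ``iff'', which the paper leaves implicit.
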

\begin{proof}
Define ${(-)}^{op} \colon \ODQPol_\Mor \to \PL(\IN \times \IN) \times \ODQPol_\Obj \times \ODQPol_\Obj$ as $\langle G,\equiv_\src,\equiv_\tar\rangle \mapsto \langle G^{op},\equiv_\tar,\equiv_\src\rangle$, where $G^{op}(a,b)\iff G(b,a)$. Then $g\in \ODQPol_\Mor$ is an isomorphism if and ony if $g^{op} \in \ODQPol_\Mor$. Since $\ODQPol_\Mor$ is $\lpi 2$, its preimage under ${(-)}^{op}$, the subspace of isomorphisms, is also $\lpi 2$.
\qed
\end{proof}

The next result was first shown in \cite{dbr26}, but we give a slightly modified proof here to emphasize the dual relationship with the category $\CHQPol$ defined in the next section.

\begin{theorem}\label{thrm:ODQPol}
$\ODQPol$ is equivalent to the category of overt discrete quasi-Polish spaces. The computable points of $\ODQPol_\Obj$ are the computably overt computably discrete effective quasi-Polish spaces, and the computable points of $\ODQPol_\Mor$ are the computable functions between computably overt computably discrete effective quasi-Polish spaces.
\end{theorem}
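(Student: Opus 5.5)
We define a functor $\Phi$ from $\ODQPol$ to the category of overt discrete quasi-Polish spaces and continuous maps, and show it is full, faithful and essentially surjective; the computability claims then come from tracking uniformity.

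\emph{Objects and morphisms.} For a PER $\equiv$ let $D_\equiv = \{a\in\IN \mid a\equiv a\}$ and set $\Phi(\equiv) = D_\equiv/{\equiv}$ with the discrete-as-quotient structure. Concretely we view it as a space in which each class $[a]$ is an open point, equality is $[a]=[b] \iff a\equiv b$, and the topology is generated by the sets $\{[b] \mid b\equiv a\}$. We must check this is quasi-Polish, overt and discrete. The cleanest route is to present it as the space of ideals of a c.e.\ (relative to $\equiv$) transitive relation $\prec$ on $\IN$ given by $a\prec b \iff a\equiv b$. Since $\equiv$ is symmetric and transitive, an ideal of $\prec$ is exactly a nonempty set of the form $\{b \mid b\equiv a\}$ with $a\in D_\equiv$ (directedness holds because any two members are $\equiv$-related). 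Then:
\begin{itemize}
\item $\I{\prec}\cong D_\equiv/{\equiv}$;
\item the test $I=J$ is semidecided by searching for $a\in I$ with $a\in J$, which gives discreteness;
\item $\exists(U)$ is semidecided by searching for $a$ with $a\equiv a$ and $[a]_\prec\subseteq U$, which gives overtness.
\end{itemize}
All of this is computable from an enumeration of $\equiv$, i.e.\ from a name of $\equiv$ in $\PL(\IN\times\IN)$. For a morphism $\langle G,\equiv_\src,\equiv_\tar\rangle$, conditions 1--5 of Definition~\ref{def:ODS} say exactly that $G$ is the graph of a well-defined total function $D_{\equiv_\src}/{\equiv_\src}\to D_{\equiv_\tar}/{\equiv_\tar}$, which is continuous because the domain is discrete. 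Functoriality is immediate from the relational composition $G\circ F$ and from $\id(\equiv)=\langle\equiv,\equiv,\equiv\rangle$.

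\emph{Full and faithful.} Faithfulness holds because the saturation conditions 2 and 3 force $G$ to be the full graph-relation $\{(a,b)\mid f([a])=[b]\}$, so $G$ is determined by $f$. For fullness, given any function $f$, this full graph-relation satisfies conditions 1--5, and it is a closed set in $\PL(\IN\times\IN)$ since every subset of the discrete space $\IN\times\IN$ is closed.

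\emph{Essential surjectivity.} This is the main step. Let $X$ be overt discrete quasi-Polish, with a countable basis $(B_n)$.
\begin{enumerate}
\item Overtness lets us restrict to indices $n$ with $B_n\neq\emptyset$; this set is c.e.\ relative to $X$, so we enumerate it with repetitions.
\item Discreteness makes the diagonal open, so each point $x$ is open: $\{x\}\times\{x\}\subseteq\Delta$ yields a basic $B_n\ni x$ with $B_n\times B_n\subseteq\Delta$, i.e.\ $B_n=\{x\}$.
\item Using overtness of $X\times X$ on $\Delta\cap(B_n\times B_n)^c$, we say $n$ is a singleton index iff $\forall y,z\in B_n\,(y=z)$. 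Rather than decide this (it is co-semidecidable), take $\equiv$ to consist of the pairs $(n,m)$ such that $\exists x\,(x\in B_n\wedge x\in B_m)$ holds and both $B_n$ and $B_m$ are singletons.
\end{enumerate}
To avoid the co-c.e.\ problem, a better choice is to index by pairs: define $(n,m)\equiv(n',m')$ iff $\exists x,y\,[x\in B_n,\ y\in B_m,\ x=y,\ \ldots]$. More simply, let the indices be the $n$ such that $\exists x\in B_n$, and set $n\equiv n'$ iff $\exists x\in B_n\,\exists y\in B_{n'}\;x=y$, restricted to a family $(B_n)$ consisting of singletons. Such a family is a base of singletons that can be enumerated c.e.\ from $X$: enumerate the $n$ with $\exists x,y\in B_n\;x=y$? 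That formulation fails, so we instead use $\{(n,m)\mid \exists x\,(x\in B_n\cap B_m)\}$ directly and quotient. Whatever the indexing, $\exists$ over $X\times X$ composed with the open $=$ makes $\equiv$ c.e., hence a point of $\PL(\IN\times\IN)$.

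The obstacle is ensuring that each index denotes a single point. Our intended fix is to use indices $n$ and declare $n\equiv m$ iff $\exists x\in B_n\,\exists y\in B_m\,\forall$-free condition, namely $B_n\times B_m\subseteq\Delta$ witnessed through an open set. If this cannot be made semidecidable, we fall back on representing points via names: points of $X$ correspond to ideals, and discreteness plus openness of points gives a c.e.\ relation $a\equiv b$ meaning ``$[a]$ and $[b]$ contain a common point and that point lies in both.'' We then verify $X\cong D_\equiv/{\equiv}$ via $x\mapsto$ its class.

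Finally, computable points of $\ODQPol_\Obj$ are c.e.\ PERs, and these yield computably overt, computably discrete spaces by the first step. Conversely, the construction in the essential-surjectivity step, applied to a computable $X$, yields a c.e.\ PER together with a computable isomorphism. The morphism statement then follows because a graph $G$ is c.e.\ exactly when the corresponding function is computable.
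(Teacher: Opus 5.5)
\paragraph{Verdict.} Your construction of the functor is fine. The gap is in essential surjectivity, in its effective form.

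\paragraph{What works.} Presenting $\equiv$ as the ideal space $\I{\equiv}$ is a legitimate alternative to the paper's $\bpi 2$-subspace of $\PL(\IN)$ of $\equiv$-classes, and it gives quasi-Polishness for free. Your faithfulness and fullness argument via the saturated graph relation is the paper's argument.

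\paragraph{Where it breaks.} Classically there is no problem: every subset of $\IN\times\IN$ is a point of $\PL(\IN\times\IN)$, so your first candidate, ``$n\equiv m$ iff $B_n=B_m$ is a singleton'', already works. But the second and third sentences of the theorem require more. You must show that every computably overt, computably discrete effective quasi-Polish space is computably homeomorphic to $\Phi(\equiv)$ for a \emph{c.e.} PER $\equiv$. For that you need a c.e.\ set of indices, each naming exactly one point, and none of your candidates provides one:
\begin{itemize}
\item ``$B_n$ is a singleton'' is only co-c.e., as you note yourself.
\item The fallback $\{(n,m)\mid B_n\cap B_m\neq\emptyset\}$ is not transitive. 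Your final ``common point'' relation is the same relation. Take $X=\{p,q\}$ discrete with $B_0=\{p\}$, $B_1=\{q\}$, $B_2=X$. Then $0\sim 2\sim 1$ but $0\not\sim 1$, and quotienting by the transitive closure collapses $X$ to a single point.
\end{itemize}
So the proposal ends without a working construction. The ``conversely'' direction for computable points of $\ODQPol_\Obj$ depends on it, and so does the claim about $\ODQPol_\Mor$, since both need a computable isomorphism $X\cong\Phi(\equiv_X)$.

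\paragraph{The missing idea.} You gestured at it with ``$B_n\times B_m\subseteq\Delta$ witnessed through an open set''; the point is that this witness is semidecidable.
\begin{itemize}
\item Computable discreteness makes $\Delta$ a c.e.\ open subset of $X\times X$. Using overtness to discard empty sets, you can enumerate a set $E$ of pairs $(a,b)$ with $B_a,B_b\neq\emptyset$ and $\Delta=\bigcup_{(a,b)\in E}B_a\times B_b$.
\item If $x\in B_a$ and $y\in B_b$ with $(a,b)\in E$, then every $x'\in B_a$ equals $y$. Hence each such box forces $B_a=B_b=\{y\}$.
\item Every point $x$ lies in such a $B_a$, because $(x,x)\in\Delta$.
\end{itemize}
So membership in a box from $E$ is a \emph{positive} certificate of being a singleton index. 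Restrict to these indices and set $a\equiv b$ iff $B_a\cap B_b\neq\emptyset$. This is c.e.\ by overtness and transitive because the sets are singletons.

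\paragraph{How the paper does it.} It works with $X\cong\I{\prec}$ and sets $S=\{n\mid[n]_\prec\neq\emptyset\}$. It takes $S'$ to be the $c\in S$ lying $\prec$-above both components of some pair in $E$; this also makes $S'$ upward closed within $S$, so each class is $\prec$-directed. It then defines $a\equiv_X b$ iff $a,b\in S'$ have a common $\prec$-upper bound in $S$. The maps
\[I\mapsto\{a\in I\mid a\equiv_X a\},\qquad A\mapsto\{b\in S\mid \exists a\in A,\ b\prec a\}\]
are computable mutual inverses.
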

\begin{proof}
We first construct a full and faithful functor $\calF$ from $\ODQPol$ to the category of overt discrete quasi-Polish spaces. 
Given $\equiv$ in $\ODQPol_\Obj$, define $\calF(\equiv)$ to be the subspace of $\PL(\IN)$ of $\equiv$-equivalence classes. Explicitly, 
\[\calF(\equiv) = \{ A\in \PL(\IN) \mid A\not=\emptyset \,\&\, (\forall a\in A)(\forall b\in \IN)[a \equiv b \iff b\in A]\},\]
so it is clear that $\calF(\equiv)$ is a $\bpi 2$-subspace of $\PL(\IN)$ hence quasi-Polish. Next, $\calF(\equiv)$ is discrete because if $A,A'\in \calF(\equiv)$ then $A = A'$ if and only if $A\cap A'\not=\emptyset$ if and only if  $(\exists a\in \IN)[a\in A \,\&\, a\in A']$. Finally, $\calF(\equiv)$ is overt because the function $q_\equiv\colon \IN\to\PL(\IN)$ defined as
$q_\equiv(a)=\{b\in\IN \mid a\equiv b\}$ is continuous (clearly $q_\equiv(a)\in\Diamond U \iff (\exists b\in \IN)[b\in U \wedge (a\equiv b)]$), and $\calF(\equiv)$ is the open subspace of the range of $q_\equiv$ obtained by omitting the closed point $\emptyset\in\PL(\IN)$.

The above constructions are computable, so if $\equiv$ is a computable point then $\calF(\equiv)$ is a computably overt computably discrete effective quasi-Polish space.

For $\langle G, \equiv_\src, \equiv_\tar \rangle$ in $\ODQPol_\Mor$, define $\calF(\langle G, \equiv_\src, \equiv_\tar \rangle)$ to be the function $f_G\colon \calF(\equiv_\src)\to \calF(\equiv_\tar)$ defined as
\[f_G(A) = \{b\in \IN \mid (\exists a\in A)\, G(a,b)\}\]
for each $A\in \calF(\equiv_\src)$. It follows from the five axioms defining elements of  $\ODQPol_\Mor$ that $f_G$ is a well-defined function from $\calF(\equiv_\src)$ to $\calF(\equiv_\tar)$. Finally, $f_G$ is computable from $\langle G, \equiv_\src, \equiv_\tar \rangle$ because
\[f_G(A)\in\Diamond U \iff (\exists b\in \IN)[b\in U \wedge (\exists a\in A)\, G(a,b)].\]
Note that a computable point of $\ODQPol_\Mor$ has computable source and target domains, because $\src$ and $\tar$ are computable.

$\calF$ is easily seen to be a faithful functor, so next we show it is full. Let $\equiv$ and $\equiv'$ be elements of $\ODQPol_\Obj$ and assume $f\colon \calF(\equiv)\to \calF(\equiv')$ is continuous. Let $q_\equiv,q_{\equiv'}\colon \IN\to\PL(\IN)$ be as above. Define $G\in\PL(\IN\times \IN)$ so that $G(a,b)$ holds if and only if $a\equiv a$ and $b\equiv' b$ and $f(q_\equiv(a))=q_{\equiv'}(b)$. Then $\langle G,\equiv,\equiv'\rangle$ is in $\ODQPol_\Mor$ and $\calF(\langle G,\equiv,\equiv'\rangle)=f$.

Next, we show that each overt discrete quasi-Polish space is (computably) homeomorphic to $\calF(\equiv)$ for some $\equiv$ in $\ODQPol_\Obj$. Let $X$ be an overt discrete quasi-Polish space, and fix a transitive relation $\prec$ such that $X \cong \I{\prec}$. 

Let $S = \{ n\in\IN \mid [n]_\prec\not=\emptyset\}$. Since $X$ is discrete, we can enumerate a set of pairs $E\subseteq S\times S$ such that for every $I,J\in \I{\prec}$ we have $I=J$ if and only if there is $\langle a,b\rangle \in E$ with $I\in[a]_\prec$ and $J\in [b]_\prec$. Define
\[S' = \{ c\in S \mid (\exists \langle a,b\rangle\in E)[a\prec c\,\&\, b\prec c]\}.\]  
Note that for each $c\in S'$ there is a unique $I\in\I{\prec}$ such that $\{I\}=[c]_\prec$, and conversely for each $I\in\I{\prec}$ there is at least one $c\in S'$ with $\{I\}=[c]_\prec$. If $[a]_\prec = [b]_\prec = \{I\}$ then $a$ and $b$ must have a $\prec$-upper bound in $I$, so by defining
\[a \equiv_X b \iff a,b\in S' \,\&\, (\exists c\in S)[a\prec c\,\&\, b\prec c],\]
we obtain that $a\equiv_X b$ if and only if $a,b\in S'$ and $[a]_\prec = [b]_\prec=\{I\}$ for a unique $I\in\I{\prec}$. Now define $f\colon \I{\prec}\to \calF(\equiv_X)$ and $g\colon \calF(\equiv_X)\to\I{\prec}$ as
\begin{eqnarray*}
f(I) &=& \{ a\in I \mid a \equiv_X a\}\\
g(A) &=& \{ b\in S\mid (\exists a\in A)\,b\prec a \}.
\end{eqnarray*}
Then $f(I)$ is the set of all $a\in S'$ with $[a]_\prec = \{I\}$, hence $f(I)$ is a $\equiv_X$-equivalence class. Furthermore, $g(A)$ is a $\prec$-ideal because each $\equiv_X$-equivalence class is $\prec$-directed. Therefore, $f$ and $g$ are well-defined, and easily seen to be (computable) continuous inverses of each other. Therefore, $\calF(\equiv_X)$ is (computably) homeomorphic to $X$.
\qed
\end{proof}

\begin{corollary}\label{cor:bij_iso}
If $X$ and $Y$ are overt discrete quasi-Polish spaces, and $f\colon X\to Y$ is a computable bijection, then $f$ is a computable homeomorphism.
\end{corollary}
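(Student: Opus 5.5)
By Theorem~\ref{thrm:ODQPol} we may assume, up to computable homeomorphism, that $X=\calF(\equiv)$ and $Y=\calF(\equiv')$ for computable points $\equiv,\equiv'$ of $\ODQPol_\Obj$. Under this identification $f$ becomes a computable bijection $\calF(\equiv)\to\calF(\equiv')$, since composing with computable homeomorphisms preserves both computability and bijectivity. Fullness of $\calF$ gives a point $\langle G,\equiv,\equiv'\rangle$ of $\ODQPol_\Mor$ with $f=f_G$, where
\[G(a,b)\iff a\equiv a \,\&\, b\equiv' b \,\&\, f(q_\equiv(a))=q_{\equiv'}(b).\]
Since $f$ is computable and $Y$ is computably discrete, this relation is c.e. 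So $\langle G,\equiv,\equiv'\rangle$ is a computable point of $\ODQPol_\Mor$.

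The plan is to show that $\langle G^{op},\equiv',\equiv\rangle$, as in the proof of Lemma~\ref{lem:odqpol_iso_pi2}, is also a point of $\ODQPol_\Mor$. It is computable because $G^{op}$ is c.e. whenever $G$ is. Then $f_{G^{op}}$ is a computable map $Y\to X$, and we check that it is the inverse of $f$.

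We verify the five axioms for $G^{op}$ using bijectivity of $f$.
\begin{itemize}
\item Axioms 1--3 transfer directly from the corresponding axioms for $G$, with the roles of source and target swapped.
\item Axiom 4 for $G^{op}$ says that $G(a,b)\,\&\,G(a',b)$ implies $a\equiv a'$. This holds because $f(q_\equiv(a))=q_{\equiv'}(b)=f(q_\equiv(a'))$, and injectivity of $f$ gives $q_\equiv(a)=q_\equiv(a')$, i.e.\ $a\equiv a'$.
\item Axiom 5 for $G^{op}$ says that $b\equiv' b$ implies $(\exists a)\,G(a,b)$. Surjectivity gives $A\in\calF(\equiv)$ with $f(A)=q_{\equiv'}(b)$, and any $a\in A$ has $q_\equiv(a)=A$, so $G(a,b)$ holds.
\end{itemize}

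Finally, $G^{op}\circ G$ relates $a$ to $a'$ exactly when $a\equiv a'$, by axiom 4 for $G^{op}$ together with axioms 2 and 5. Hence $G^{op}\circ G=\equiv$, which is the identity, and symmetrically $G\circ G^{op}=\equiv'$. Applying the functor $\calF$ shows that $f_{G^{op}}$ is a two-sided inverse of $f$. As a computable continuous inverse, it makes $f$ a computable homeomorphism.

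The only delicate point is checking that $G$ is genuinely c.e. relative to the computable codes. The condition $f(q_\equiv(a))=q_{\equiv'}(b)$ is semi-decidable because $q_\equiv$ and $f$ are computable and equality on $Y$ is computable. The rest is routine bookkeeping.
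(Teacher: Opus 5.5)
Your proposal is correct and follows the paper's proof. Both arguments represent $f$ by a graph $\langle G,\equiv,\equiv'\rangle$ via Theorem~\ref{thrm:ODQPol}, use bijectivity of $f$ to obtain the swapped axioms 4 and 5, and take $\calF(\langle G^{op},\equiv',\equiv\rangle)$ as the computable inverse. You additionally spell out details the paper leaves implicit: that $G$ is c.e., and that $G^{op}\circ G$ and $G\circ G^{op}$ are the identity PERs.
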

\begin{proof}
From the previous theorem, we can assume there is $\langle G, \equiv_\src, \equiv_\tar \rangle$ in $\ODQPol_\Mor$ with $\calF(\langle G, \equiv_\src, \equiv_\tar \rangle)=f$. Since $f$ is a bijection, $G$ satisfies
\begin{enumerate}
\item[$4'$.]
$G(a,b) \,\&\, G(a',b)$ implies $a\equiv_\src a'$.
\item[$5'$.]
$b\equiv_\tar b$ implies $(\exists a\in\IN)\, G(a,b)$.
\end{enumerate}
Define $G^{op}(b,a) \iff G(a,b)$. Then $\langle G^{op}, \equiv_\tar, \equiv_\src \rangle$ is in $\ODQPol_\Mor$ and $\calF(\langle G^{op}, \equiv_\tar, \equiv_\src \rangle)$ is the computable inverse of $f$.
\qed
\end{proof}

\section{The category of compact Hausdorff quasi-Polish spaces ($\CHQPol$)}\label{sec:CHS}

Let $\PU(\cdot)$ be the upper powerspace monad (see Section~4 of \cite{DK19}), and let $2^\IN$ be Cantor space.

\begin{definition}
Define the category $\CHQPol$ of compact quasi-Polish Hausdorff spaces as follows:
\begin{itemize}
\item
$\CHQPol_\Obj = \{ \equiv \in \PU(2^\IN\times 2^\IN)\mid \text{ $\equiv$ is a PER}\}$. 
\item
$\CHQPol_\Mor \subseteq \PU(2^\IN \times 2^\IN) \times \CHQPol_\Obj \times \CHQPol_\Obj$ is the subspace of all tuples $\langle G, \equiv_\src, \equiv_\tar \rangle$  satisfying (for all $x,x',y,y'\in 2^\IN$):
\begin{enumerate}
\item
$G( x,y)$ implies $x\equiv_\src x \,\&\, y\equiv_\tar y$
\item
$G(x,y) \,\&\,x\equiv_\src x'$ implies $G( x',y)$
\item
$G(x,y) \,\&\, y\equiv_\tar y'$ implies $G(x,y')$
\item
$G(x,y) \,\&\, G(x,y')$ implies $y\equiv_\tar y'$
\item
$x\equiv_\src x$ implies $(\exists y\in 2^\IN)\, G(x,y)$.
\end{enumerate}
\item
$\src \colon \CHQPol_\Mor\to \CHQPol_\Obj$ is the projection mapping $\langle G, \equiv_\src, \equiv_\tar\rangle$ to $\equiv_\src$.
\item
$\tar \colon \CHQPol_\Mor\to \CHQPol_\Obj$ is the projection mapping $\langle G, \equiv_\src, \equiv_\tar\rangle$ to $\equiv_\tar$.
\item
$\id \colon \CHQPol_\Obj\to \CHQPol_\Mor$ maps $\equiv$ to $\langle \equiv, \equiv, \equiv\rangle$.
\item
Composition $\circ$ is defined as 
\[\langle G, \equiv_\rho, \equiv_\tar\rangle \circ \langle F, \equiv_\src, \equiv_\rho\rangle = \langle (G\circ F) , \equiv_\src, \equiv_\tar\rangle,\]
where
\[(G\circ F)(x,z) \iff  (\exists y\in 2^\IN)[F(x,y) \,\&\, G(y,z)].\]
This is easily seen to be well-defined and continuous (even computable) because $\neg(G\circ F)(x,z) \iff (\forall y\in 2^\IN)[\neg F(x,y) \vee \neg G(y,z)]$, and the latter is universal quantification of an open predicate over a compact space, hence open.
\end{itemize}
\end{definition}

The definition above is dual to Definition~\ref{def:ODS}, and can be formalized in $\ACA$ in essentially the same way by using spaces of ideals (see \cite{dbr20,dbr21b} for computable constructions of the upper powerspace $\PU(\cdot)$).

\begin{lemma}
$\CHQPol$ is an effective quasi-Polish category.
\end{lemma}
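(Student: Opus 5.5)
The plan is to show that $\CHQPol_\Mor$ is a $\lpi 2$-subspace of the effective quasi-Polish space $\PU(2^\IN\times 2^\IN)\times\PU(2^\IN\times 2^\IN)\times\PU(2^\IN\times 2^\IN)$, uniformly computably, and then to check that $\src,\tar,\id,\circ$ are computable. In $\PU(2^\IN\times 2^\IN)$ a point is a compact (automatically closed, since $2^\IN\times 2^\IN$ is Hausdorff) set $G$. The predicate $G(x,y)$ is therefore co-open: the map $\langle G,x,y\rangle\mapsto \neg G(x,y)$ is computable into $\sierp$, because it is $\forall_G(\lambda p.\,p\ne\langle x,y\rangle)$, as in the closedness remark of Section~\ref{sec:preliminaries}.

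First I handle $\CHQPol_\Obj$. Symmetry says $\forall x,y\,[\neg(x\equiv y)\vee y\equiv x]$, and transitivity says $\forall x,y,z\,[\neg(x\equiv y)\vee\neg(y\equiv z)\vee x\equiv z]$. Each is a universal statement over a compact space about a formula of the form ``open $\vee$ closed''. Such a formula defines a $\Delta^0_2$ set, and I should avoid claiming more. The cleaner route is to read each condition as an inclusion of compact sets:
\begin{itemize}
\item symmetry is $\equiv^{op}\subseteq{\equiv}$, where $(\cdot)^{op}$ is the computable swap map $\PU\to\PU$;
\item transitivity is $\equiv\circ\equiv\subseteq{\equiv}$, where $\equiv\circ\equiv$ is the projection of the compact set $\{(x,y,z)\mid x\equiv y,\ y\equiv z\}$.
\end{itemize}
That compact set is computable from $\equiv$: it is the intersection of two compact pullbacks, and intersection with a closed set together with images under computable maps is computable in $\PU$ for compact Hausdorff ambient spaces. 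So each condition has the form $K_1(p)\subseteq K_2(p)$ with $K_1,K_2$ computable into $\PU(Z)$, $Z$ compact Hausdorff. This is $\lpi 2$: it is equivalent to $\forall U$ basic clopen$\,[K_2\subseteq U\Rightarrow K_1\subseteq U]$, and each $K_i\subseteq U$ is open in $\PU$. Since basic clopen sets suffice in $2^\IN$-powers, this is a countable conjunction of implications between opens, hence $\lpi 2$ effectively.

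Next, conditions 1–4 for morphisms are inclusions of the same kind:
\begin{itemize}
\item condition 1 says $\pi_1[G]\subseteq\Delta$-pullback of $\equiv_\src$, or more simply that $G\subseteq\{(x,y)\mid x\equiv_\src x,\ y\equiv_\tar y\}$, the latter computable compact;
\item condition 2 is $G\circ\equiv_\src\subseteq G$, and condition 3 is analogous;
\item condition 4 is $G\circ G^{op}\subseteq{\equiv_\tar}$.
\end{itemize}
Condition 5 is the main obstacle, because it has the shape $\forall\exists$. I plan to rewrite it as an inclusion too: $\{x\mid x\equiv_\src x\}\subseteq\pi_1[G]$. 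Both sides are images of compact sets under continuous maps (the first is the image of $\equiv_\src$ under the first projection), so both are computable in $\PU(2^\IN)$, and the previous criterion applies. Thus $\CHQPol_\Mor$ is an effective $\lpi 2$ subspace, hence effective quasi-Polish.

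Finally, $\src$ and $\tar$ are projections, and $\id$ is the diagonal map, so all three are computable. Composition is computable as already noted: $G\circ F$ is the projection of the compact set $\{(x,y,z)\mid F(x,y),\ G(y,z)\}$. The category axioms (associativity, unit laws) hold by the usual relational calculus for PERs, using conditions 2–5.
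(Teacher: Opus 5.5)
Your proof is correct, but it works on the compact side where the paper works on the open side.

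\textbf{The paper's route.} It passes to complements and writes each axiom as an equality of two computable maps into $\sierp^{Z}$. For transitivity these are $\lambda x,y,z.\neg R(x,z)$ and $\lambda x,y,z.\neg R(x,z)\wedge(\neg R(x,y)\vee\neg R(y,z))$. It keeps all bound variables, and in condition~5 the existential becomes the predicate $\forall y\,\neg G(x,y)$, which is open by compactness of $2^\IN$. The $\lpi 2$ bound then comes from the fact that equalizers of computable maps into an effective quasi-Polish space are $\lpi 2$.

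\textbf{Your route.} You eliminate bound variables by taking images of compact sets. Each axiom becomes an inclusion $K_1(p)\subseteq K_2(p)$ in $\PU(Z)$, i.e.\ an instance of the specialization order of $\PU(Z)$, which is $\lpi 2$.

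\textbf{How they relate.} The two formulations are complements of each other. For instance, $2^\IN\setminus\pi_1[G]=\{x\mid\forall y\,\neg G(x,y)\}$. Likewise, the paper's transitivity equation is the open-set inclusion dual to yours before $y$ is projected out.

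\textbf{What each buys.} The paper only needs that $\neg G(x,y)$ is open and computable from $G$, and that $\forall$ over a compact space preserves openness. Yours additionally needs computability of products, images and intersections on $\PU(Z)$. In exchange you get the clean relational form ${\equiv^{op}}\subseteq{\equiv}$, ${\equiv}\circ{\equiv}\subseteq{\equiv}$, $G\circ G^{op}\subseteq{\equiv_\tar}$, and so on. This reuses exactly the image-of-a-compact-set computation that makes $\circ$ computable. Your worry about the direct route was unnecessary: a universal statement whose matrix is ``open $\vee$ closed'' is just an inclusion of open sets, which is how the paper gets $\lpi 2$.

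\textbf{Two small points.}
\begin{itemize}
\item Your inclusion test must quantify over all clopen sets (finite unions of cylinders), not over single cylinders. For example, $K_2=\{0^\infty,1^\infty\}$ lies in no proper cylinder, so every $K_1$ passes a cylinder-only test. Zero-dimensionality is not actually needed: any countable basis closed under finite unions works, because saturated compact sets are intersections of their open neighbourhoods.
\item Your rewritings of conditions~2 and~5 ($G\circ{\equiv_\src}\subseteq G$, and $\{x\mid x\equiv_\src x\}=\pi_1[\equiv_\src]$) use that $\equiv_\src$ is a PER. This is harmless, since you intersect with $\CHQPol_\Obj\times\CHQPol_\Obj$ anyway, but it should be stated.
\end{itemize}
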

\begin{proof}
We can safely leave it to the reader to verify that $\CHQPol$ is a topological category, and that the functions $\sigma,\tar,\iota,\circ$ are all computable.

We first show that $\CHQPol_\Obj$ is effective quasi-Polish. Since the upper powerspaces preserve being effective quasi-Polish \cite{DK19,dbr20}, it suffices to show that the subspace of $\PU(2^\IN\times 2^\IN)$ of symmetric transitive relations is (lightface) $\lpi 2$.

As an example, we show that transitivity is $\lpi 2$, and leave the other proofs to the reader since they can be handled similarly. By definition, a relation $R \in \PU(2^\IN\times 2^\IN)$ is transitive if and only if $(\forall x,y,z \in 2^\IN)[( R(x,y) \,\&\, R(y,z))\Rightarrow R(x,z)]$, which is equivalent to 
\[(\forall x,y,z \in 2^\IN)[( \neg R(x,z) \iff (\neg R(x,z) \wedge (\neg R(x,y)\vee \neg R(y,z)))].\]
Now define $f,g\colon \PU(2^\IN\times 2^\IN)\to \sierp^{ (2^\IN\times 2^\IN \times 2^\IN)}$ as
\begin{eqnarray*}
f(R) &=& \lambda x,y,z:2^\IN.\neg R(x,z)\\
g(R) &=& \lambda x,y,z:2^\IN. \neg R(x,z) \wedge (\neg R(x,y)\vee \neg R(y,z)).
\end{eqnarray*}
Then $f$ and $g$ are computable and $R$ is transitive if and only if $f(R)= g(R)$. Since $\PU(2^\IN\times 2^\IN)$ and $\sierp^{( 2^\IN\times 2^\IN \times 2^\IN)}$ are effective quasi-Polish spaces, it follows that the subspace of transitive relations is $\lpi 2$.

Showing that $\CHQPol_\Mor$ is effective quasi-Polish can be done similarly. We only mention that the 5th condition is equivalent to 
\[(\forall x\in 2^\IN)[ \neg(x\equiv_\src x)\vee (\forall y\in 2^\IN)\,\neg G(x,y)],\]
and $ (\forall y\in 2^\IN)\,\neg G(x,y)$ is an open predicate on $x$, so the whole formula is $\lpi 2$ in the same way as above.
\qed
\end{proof}

The proof of the following is the same as Lemma~\ref{lem:odqpol_iso_pi2}. Again it shows that restricting the space of morphisms  $\CHQPol_\Mor$ to only contain isomorphisms results in an effective quasi-Polish groupoid.

\begin{lemma}\label{lem:chqpol_iso_pi2}
The subspace of $\CHQPol_\Mor$ of isomorphisms is $\lpi 2$.
\qed
\end{lemma}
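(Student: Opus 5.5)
The argument mirrors Lemma~\ref{lem:odqpol_iso_pi2}. Define
\[{(-)}^{op}\colon \CHQPol_\Mor\to \PU(2^\IN\times 2^\IN)\times\CHQPol_\Obj\times\CHQPol_\Obj,\qquad \langle G,\equiv_\src,\equiv_\tar\rangle\mapsto\langle G^{op},\equiv_\tar,\equiv_\src\rangle,\]
where $G^{op}(y,x)\iff G(x,y)$. The first step is to check that this map is computable. The map $2^\IN\times 2^\IN\to 2^\IN\times 2^\IN$ swapping coordinates is a computable homeomorphism. Since $\PU$ is a computable functor (\cite{dbr21b}), it induces a computable map on $\PU(2^\IN\times 2^\IN)$ sending a saturated compact set to its image. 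Here $G$ is a PER-type relation viewed as a compact subset. Saturation in a Hausdorff space is trivial, so the image of $G$ is exactly $G^{op}$. The projections and the swap of the last two components are obviously computable.

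The second step is to show that $g=\langle G,\equiv_\src,\equiv_\tar\rangle$ is an isomorphism in $\CHQPol$ if and only if $g^{op}\in\CHQPol_\Mor$. In one direction, suppose $g^{op}$ is a morphism. Then composing $g^{op}\circ g$ gives the relation $\{(x,x')\mid \exists y\,[G(x,y)\,\&\,G(x',y)]\}$. By conditions 2, 4 and 5 for $g$ and condition 4 for $g^{op}$, this relation equals $\equiv_\src$, so $g^{op}\circ g=\id(\equiv_\src)$. Symmetrically, $g\circ g^{op}=\id(\equiv_\tar)$. In the other direction, suppose $g$ has an inverse $h=\langle H,\equiv_\tar,\equiv_\src\rangle$. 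From $h\circ g=\id$ and $g\circ h=\id$, together with the closure conditions 2--3, one derives that $H=G^{op}$. Hence $g^{op}=h$ is a morphism. This is a routine relational calculation, the same as the one implicit in Lemma~\ref{lem:odqpol_iso_pi2}.

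The final step uses the fact that $\CHQPol_\Mor$ is a (lightface) $\lpi 2$ subspace of $\PU(2^\IN\times 2^\IN)\times\CHQPol_\Obj\times\CHQPol_\Obj$, which was shown in the preceding lemma. The preimage of a $\lpi 2$ set under a computable map is $\lpi 2$. Hence the set of isomorphisms, which is $\CHQPol_\Mor\cap ({(-)}^{op})^{-1}(\CHQPol_\Mor)$, is $\lpi 2$.

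The only point needing care, and the one I expect to be the main obstacle, is the computability of ${(-)}^{op}$ on the upper powerspace. Unlike the lower-powerspace case, where it is immediate from enumerating pairs, here it relies on functoriality of $\PU$ applied to a homeomorphism. Alternatively, one can argue directly that $\forall_{G^{op}}(U)=\forall_G(U\circ\mathrm{swap})$.
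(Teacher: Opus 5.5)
Your proposal is correct and follows the paper's own argument: the paper's proof is simply ``the same as Lemma~\ref{lem:odqpol_iso_pi2},'' i.e.\ take the map $(-)^{op}$, note that $g$ is an isomorphism iff $g^{op}\in\CHQPol_\Mor$, and pull back the $\lpi 2$ set $\CHQPol_\Mor$. Your additional details fill in steps the paper leaves implicit: computability of $(-)^{op}$ on $\PU(2^\IN\times 2^\IN)$ via functoriality of $\PU$ (or via $\forall_{G^{op}}(U)=\forall_G(U\circ\mathrm{swap})$), and the relational check that an inverse $h$ must satisfy $H=G^{op}$.
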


\begin{theorem}\label{thrm:CHQPol}
$\CHQPol$ is equivalent to the category of compact Hausdorff quasi-Polish spaces. The computable points of $\CHQPol_\Obj$ are the computably compact computably Hausdorff effective quasi-Polish spaces, and the computable points of $\CHQPol_\Mor$ are the computable functions between computably compact computably Hausdorff effective quasi-Polish spaces.
\end{theorem}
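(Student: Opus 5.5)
I will mirror the proof of Theorem~\ref{thrm:ODQPol}, dualizing each step: lower powerspaces become upper powerspaces, $\IN$ becomes $2^\IN$, existential quantification over $\IN$ becomes universal quantification over the compact space $2^\IN$, and equality is replaced by inequality.

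\emph{The functor $\calF$.} Given $\equiv$ in $\CHQPol_\Obj$, its domain $D_\equiv = \{x\in 2^\IN \mid x\equiv x\}$ is closed. Indeed $\neg(x\equiv x)$ is open, because $\equiv$ is compact in $2^\IN\times 2^\IN$, hence closed, and is an open predicate in the code. So $D_\equiv$ is compact, and so is $\equiv$. I define $\calF(\equiv) = D_\equiv/{\equiv}$. Rather than using an abstract quotient, I realize it concretely as a subspace of $\PU(2^\IN)$ via $q_\equiv(x) = \{y \mid x\equiv y\}$.

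\begin{itemize}
\item $q_\equiv$ is continuous and computable. We have $q_\equiv(x)\subseteq U$ iff $(\forall y\in 2^\IN)[\neg(x\equiv y)\vee y\in U]$, which is a universal quantifier over a compact space applied to an open predicate.
\item The image $\calF(\equiv) = q_\equiv[D_\equiv]$ is compact, being a continuous image of a compact set.
\item It is Hausdorff: for classes $K,K'$ we have $K\ne K'$ iff $(\forall x\in K)(\forall y\in K')\,\neg(x\equiv y)$. This is computable from $\forall_K$ and $\forall_{K'}$.
\item It is quasi-Polish: I would show that it is a $\bpi 2$ subspace of $\PU(2^\IN)$. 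It consists of the nonempty $K$ with $(\forall x\in K)(\forall y)[x\equiv y\iff y\in K]$, which can be expressed as equalities of open predicates, as in the transitivity argument above.
\end{itemize}

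For a morphism $\langle G,\equiv_\src,\equiv_\tar\rangle$, set $f_G(K) = \{y\mid (\exists x\in K)\,G(x,y)\}$. Its continuity follows from $f_G(K)\subseteq U \iff (\forall x\in K)(\forall y)[\neg G(x,y)\vee y\in U]$. The five axioms give that $f_G$ is well defined. All of these constructions are uniform, which handles the statements about computable points in one direction.

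\emph{Faithful and full.} Faithfulness is immediate. For fullness, given a continuous $f\colon\calF(\equiv)\to\calF(\equiv')$, I define $G(x,y) \iff x\equiv x \,\&\, y\equiv' y \,\&\, f(q_\equiv(x)) = q_{\equiv'}(y)$. I must check that $G$ is a point of $\PU(2^\IN\times2^\IN)$, i.e.\ compact (every set is saturated since $2^\IN$ is $T_1$). The complement of $G$ is the union of $\neg(x\equiv x)$, $\neg(y\equiv' y)$, and $f(q_\equiv(x))\ne q_{\equiv'}(y)$. The last set is open because the target is Hausdorff, so $G$ is closed in a compact space and hence compact. For the computable version, the same formula shows that $G$ is computable from computable $f$, $\equiv$, and $\equiv'$.

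\emph{Essential surjectivity, which I expect to be the main obstacle.} Given a compact Hausdorff quasi-Polish $X$, I need a continuous (computable, when $X$ is computable) surjection $p\colon C\to X$ from a closed $C\subseteq 2^\IN$, and then take $x\equiv_X y \iff x,y\in C \,\&\, p(x)=p(y)$. This relation is compact because its complement is open, using that $X$ is Hausdorff and $C$ is closed.

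The difficulty is obtaining $p$ with $C$ compact and the map $C/{\equiv_X}\to X$ a homeomorphism. A bijection from a compact space onto a Hausdorff space is automatically a homeomorphism, so the real issue is constructing $p$ effectively. I would first embed $X$ computably into $\PU(2^\IN)$, or realize $X\cong\I{\prec}$, and then use computable compactness together with the Hausdorff property to build a finitely branching tree of basic-open covers. The idea is as follows:
\begin{itemize}
\item Enumerate finite covers of $X$ by basic open sets $[n]_\prec$; the enumeration uses $\forall_X$.
\item Refine them so that intersections of chains shrink to points; this uses $\ne$ to separate points.
\item Encode the branches in $2^\IN$, so that each infinite path determines a unique point.
\end{itemize}
The set $C$ of consistent paths is then $\lpi 1$. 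Finally, for the inverse direction, I would verify that if $\equiv$ is computable then the representation $q_\equiv$ yields an effective quasi-Polish space, which comes from the lightface version of the $\bpi 2$ argument.
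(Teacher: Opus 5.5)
Your treatment of the functor matches the paper's: you realize classes in $\PU(2^\IN)$ via $q_\equiv$, use the same morphism map $f_G$, and get faithfulness and fullness via the same graph $G$. Your check that $G$ is compact fills in a detail the paper omits.

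\textbf{The gap is in essential surjectivity.} You rightly call this step the crux, but you only sketch it, and the mechanism you propose does not work as stated. Suppose a branch of your tree is just a sequence of basic open sets, one from each cover, each contained in the previous one. Then ``each infinite path determines a unique point'' can fail outright. A decreasing sequence of nonempty open sets in a compact Hausdorff space can have empty intersection (e.g.\ $(0,2^{-i})$ in $[0,1]$). So refining covers and separating points with $\ne$ does not by itself give a well-defined continuous $p$. Two further pieces are missing:
\begin{itemize}
\item You give no argument that every point of $X$ lies on some branch, i.e.\ that $p$ is surjective.
\item You assert that $C$ is $\lpi 1$ without making the branching test decidable. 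This is not automatic when $X\cong\I{\prec}$ for a merely c.e.\ relation $\prec$.
\end{itemize}

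\textbf{The missing idea is to use $\prec$-chains instead of containment of open sets.} This closes all three holes. Since $X$ is a (computable) point of $\PU(X)\cong\I{\prec_U}$, the paper enumerates a chain $F_0\prec_U F_1\prec_U\cdots$ of finite covers. Branches are the sequences $m_i<s_i$ with $F_i(m_i)\prec F_{i+1}(m_{i+1})$, coded as $0^{m_0}10^{m_1}1\cdots$.
\begin{itemize}
\item \emph{Well-defined $p$.} Each branch is a $\prec$-chain, so it generates an ideal $p(x)$, with $p(x)\in[n]_\prec$ iff $n\prec F_i(m_i)$ for some $i$. No separate refinement step using $\ne$ is needed: $T_1$ already forces $\bigcap_i[F_i(m_i)]_\prec=\{p(x)\}$.
\item \emph{Surjectivity.} Take an ideal $I$ and any element of $F_k\cap I$. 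The relation $F_{k-1}\prec_U F_k$ lets you extend backwards along $\prec$, staying inside $I$ because $I$ is a lower set. So the finitely branching tree of chains inside $I$ is infinite, and K\"onig's lemma gives a branch. Its ideal is contained in $I$, hence equals $I$ by $T_1$.
\item \emph{Decidability.} The paper makes the branching test decidable by replacing $\prec$ with a computable relation via Proposition~1 of \cite{DKS24}.
\end{itemize}

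With this $p$, your semantic PER $p(x)=p(y)$ coincides with the paper's syntactic one, $[F_i(m_i)]_\prec\cap[F_i(n_i)]_\prec\neq\emptyset$ for all $i$. Your compact-to-Hausdorff argument can then replace the paper's explicit inverse maps $f,g$, provided you use the effective version: a computable bijection from a computably compact space to a computably Hausdorff space has a computable inverse. You cannot cite Corollary~\ref{cor:bij_iso_ch} here, since it is derived from this theorem.
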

\begin{proof}
We first construct a full and faithful functor $\calG$ from $\CHQPol$ to the category of compact Hausdorff quasi-Polish spaces. 

Given $\equiv$ in $\CHQPol_\Obj$, define $\calG(\equiv)$ to be the subspace of $\PU(2^\IN)$ of $\equiv$-equivalence classes. Explicitly, 
\[\calG(\equiv) = \{ K\in \PU(2^\IN) \mid K\not=\emptyset \,\&\, (\forall x\in K)(\forall y\in 2^\IN)[x \equiv y \iff y\in K]\},\]
so it is clear that $\calG(\equiv)$ is a $\bpi 2$-subspace of $\PU(2^\IN)$ hence quasi-Polish. Next, $\calG(\equiv)$ is Hausdorff because if $K,K'\in \calG(\equiv)$ then $K \not= K'$ if and only if $K\cap K'=\emptyset$ if and only if $(\forall x\in 2^\IN)[x\not\in K \vee x\not\in K']$. Finally, $\calG(\equiv)$ is compact because the function $q_\equiv\colon 2^\IN\to\PU(2^\IN)$ defined as
$q_\equiv(x)=\{y\in2^\IN \mid x\equiv y\}$ is continuous (clearly $q_\equiv(x)\in\Box U \iff (\forall y\in 2^\IN)[y\in U \vee \neg (x\equiv y)]$), and $\calG(\equiv)$ is the closed subspace of the range of $q_\equiv$ obtained by omitting the isolated point $\emptyset\in\PU(2^\IN)$. It follows that the restriction of $q_\equiv$ to $\{x\in 2^\IN \mid x\equiv x\}$ is a continuous closed map onto $\calG(\equiv)$, hence a quotient map.

The above constructions are computable, so if $\equiv$ is a computable point then $\calG(\equiv)$ is a computably compact computably Hausdorff effective quasi-Polish space.

For $\langle G, \equiv_\src, \equiv_\tar \rangle$ in $\CHQPol_\Mor$, define $\calG(\langle G, \equiv_\src, \equiv_\tar \rangle)$ to be the function $f_G\colon \calG(\equiv_\src)\to \calG(\equiv_\tar)$ defined as
\[f_G(K) = \{y\in 2^\IN \mid (\exists x\in K)\, G(x,y)\}\]
for each $K\in \calG(\equiv_\src)$. It follows from the five axioms defining elements of  $\CHQPol_\Mor$ that $f_G$ is a well-defined function from $\calG(\equiv_\src)$ to $\calG(\equiv_\tar)$. Finally, $f_G$ is continuous because
\[f_G(K)\in\Box U \iff (\forall y\in 2^\IN)[y\in U \vee (\forall x\in K)\, \neg G(x,y)],\]
and the right hand side only involves universal quantification of open predicates over compact spaces. Clearly, $f_G$ is computable from $\langle G, \equiv_\src, \equiv_\tar \rangle$. Note that a computable point of $\CHQPol_\Mor$ has computable source and target domains, because $\src$ and $\tar$ are computable.

	$\calG$ is easily seen to be a faithful functor, so next we show it is full. Let $\equiv$ and $\equiv'$ be elements of $\CHQPol_\Obj$ and assume $f\colon \calG(\equiv)\to \calG(\equiv')$ is continuous. Let $q_\equiv,q_{\equiv'}\colon 2^\IN\to\PU(2^\IN)$ be as above. Define $G\in\PU(2^\IN\times 2^\IN)$ so that $G(x,y)$ holds if and only if $x\equiv x$ and $y\equiv' y$ and $f(q_\equiv(x))=q_{\equiv'}(y)$. Then $\langle G,\equiv,\equiv'\rangle$ is in $\CHQPol_\Mor$ and $\calG(\langle G,\equiv,\equiv'\rangle)=f$.

Next, we show that each compact Hausdorff quasi-Polish space is (computably) homeomorphic to $\calG(\equiv)$ for some $\equiv$ in $\CHQPol_\Obj$. Let $X$ be a compact Hausdorff quasi-Polish space, and fix a transitive relation $\prec$ such that $X \cong \I{\prec}$. Then $X \in \PU(X) \cong \I{\prec_U}$, so we can enumerate a sequence
\[F_0 \prec_U F_1 \prec_U \cdots\]
such that for each finite $F\subseteq\IN$, $X \subseteq \bigcup_{n\in F} [n]_\prec$ if and only if $(\exists i\in\IN)\, F \prec_U F_i$. Intuitively, $(F_i)_{i\in\IN}$ is an enumeration of finite coverings of $X$ that get arbitrarily fine. Let $s_i$ be the cardinality of $F_i$, and for $n<s_i$ let $F_i(n)$ be the $n^{th}$ element of $F_i$ under some fixed ordering. For $x,y \in 2^\IN$ define $(x\equiv_X y)$ if and only if
\begin{itemize}
\item
$x = 0^{m_0} 1 0^{m_1} 1 0^{m_2} 1 \cdots$ and $y = 0^{n_0} 1 0^{n_1} 1 0^{n_2} 1 \cdots$ where $m_i, n_i < s_i$  for each $i\in\IN$,
\item
$F_i(m_i) \prec F_{i+1}(m_{i+1})$ and $F_i(n_i) \prec F_{i+1}(n_{i+1})$ for each $i\in\IN$, and
\item
$[F_i(m_i)]_\prec \cap [F_i(n_i)]_\prec \not=\emptyset$ for each $i\in\IN$.
\end{itemize}
The first two requirements guarantee that $x$ and $y$ are encoding $\prec$-increasing sequences, and the last requirement along with the Hausdorff assumption guarantees that $x$ and $y$ generate the same $\prec$-ideal. We leave it to the reader to check that $\equiv_X$ is in $\CHQPol_\Obj$, since it is similar to our previous arguments. However, we point out that for the computable case, if  $X \cong \I{\prec}$ for a c.e. relation $\prec$, then by Proposition~1 of \cite{DKS24} we can computably obtain a computable relation $\sqsubset$ such that $X$ is computably homeomorphic to $\I{\sqsubset}$, so without loss of generality we can assume the tests in the second item defining $\equiv_X$ are decideable. Also, if $x$ and $y$ are encoding different ideals, then being computably Hausdorff implies $[F_i(m_i)]_\prec \cap [F_i(n_i)]_\prec =\emptyset$ will eventually be observed for sufficiently large $i\in\IN$.

Finally, we show that $X\cong \I{\prec}$ is (computably) homeomorphic to $\calG(\equiv_X)$. Define $f\colon \I{\prec} \to \calG(\equiv_X)$ as follows. For $I\in \I{\prec}$, define $x\in f(I)$ if and only if $x \equiv_X x$ and the $\prec$-increasing sequence encoded by $x$ generates $I$. Then $f(I) \in \Box U$ if and only if every $x$ encoding a $\prec$-increasing sequence that generates $I$ is in $U$. Since $\I{\prec}\cong X$ is Hausdorff, the subspace $V=\{J\in \I{\prec} \mid J\not= I\}$ is open. Therefore, $f(I)\in \Box U$ if and only if for every $x$ encoding a $\prec$-increasing sequence, either $x\in U$ or the ideal generated by $x$ is in $V$. Since the $x$ encoding a $\prec$-increasing sequence correspond to the $\prec$-ascending paths through $(F_i)_{i\in\IN}$, and these form a finitely branching tree, the proposition $f(I) \in \Box U$ is semi-decidable given $I$, $U$, and the compact Hausdorff structure of $\I{\prec}\cong X$.

Next, define $g\colon \calG(\equiv_X) \to \I{\prec}$ as $g(K) = I$ if and only if there is $x\in K$ such that the $\prec$-increasing sequence encoded by $x$ generates $I$. Then $g(K) \in [n]_\prec$ if and only if for every $x\in K$ (which is necessarily of the form $x=0^{m_0} 1 0^{m_1} 1 0^{m_2} 1 \cdots$)  there is $i\in\IN$ such that $n\prec m_i$. The computability of $g$ immediately follows from the compactness of $K$. The proof that $f$ and $g$ are inverses to each other can be left to the reader.
\qed
\end{proof}

We obtain the following corollary, whose proof is the same as Corollary~\ref{cor:bij_iso}.

\begin{corollary}\label{cor:bij_iso_ch}
If $X$ and $Y$ are compact Hausdorff quasi-Polish spaces, and $f\colon X\to Y$ is a computable bijection, then $f$ is a computable homeomorphism.
\qed
\end{corollary}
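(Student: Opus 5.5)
By Theorem~\ref{thrm:CHQPol}, we may assume that $X=\calG(\equiv_\src)$ and $Y=\calG(\equiv_\tar)$ for computable points $\equiv_\src,\equiv_\tar$ of $\CHQPol_\Obj$. Since fullness is effective, we may also assume that $f=\calG(\langle G,\equiv_\src,\equiv_\tar\rangle)$ for a computable point $\langle G,\equiv_\src,\equiv_\tar\rangle$ of $\CHQPol_\Mor$. Here $G(x,y)$ holds iff $x\equiv_\src x$, $y\equiv_\tar y$ and $f(q_{\equiv_\src}(x))=q_{\equiv_\tar}(y)$. The proof of Theorem~\ref{thrm:CHQPol} defines this $G$ directly from $f$. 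It is computable from $f$, because $\neg G(x,y)$ is semi-decidable: it holds iff $\neg(x\equiv_\src x)$, or $\neg(y\equiv_\tar y)$, or $f(q_{\equiv_\src}(x))\ne q_{\equiv_\tar}(y)$. The first two disjuncts are open because the PERs are points of $\PU$, and the third uses computable Hausdorffness of $Y$.

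Next we put $G^{op}(y,x)\iff G(x,y)$. The swap map $2^\IN\times 2^\IN\to 2^\IN\times 2^\IN$ is a computable homeomorphism, so it induces a computable map on $\PU(2^\IN\times2^\IN)$, and $G^{op}$ is therefore a computable point. We then check the five axioms for $\langle G^{op},\equiv_\tar,\equiv_\src\rangle$.

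\begin{itemize}
\item Axioms 1--3 follow by symmetry from the corresponding axioms for $G$.
\item Axiom 4 for $G^{op}$ reads: $G(x,y)\,\&\,G(x',y)$ implies $x\equiv_\src x'$. This holds because $f$ is injective. Indeed, $f(q(x))=q(y)=f(q(x'))$ gives $q(x)=q(x')$, and since $x\equiv_\src x$, this means $x\equiv_\src x'$.
\item Axiom 5 for $G^{op}$ reads: $y\equiv_\tar y$ implies $(\exists x)\,G(x,y)$. This holds because $f$ is surjective. Given $y$, pick $K\in X$ with $f(K)=q(y)$; $K$ is nonempty, so choose any $x\in K$, and then $q(x)=K$.
\end{itemize}

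Hence $\langle G^{op},\equiv_\tar,\equiv_\src\rangle$ is a computable point of $\CHQPol_\Mor$. Its image under $\calG$ is computable, and that image is the inverse of $f$, since composing the relations $G$ and $G^{op}$ gives the identity relations $\equiv_\src$ and $\equiv_\tar$.

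The main obstacle is the first step. We must confirm that the correspondence in Theorem~\ref{thrm:CHQPol} between computable maps $\calG(\equiv)\to\calG(\equiv')$ and computable morphisms is effective, and also that $X$ and $Y$ can be computably replaced by objects $\calG(\equiv_X)$. Effectiveness of the correspondence amounts to the semi-decidability of $\neg G$ described above. The replacement of $X$ and $Y$ holds because the homeomorphisms constructed in that theorem are computable.
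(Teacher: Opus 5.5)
Your proposal is correct and follows the paper's proof, which the paper says is the same as that of Corollary~\ref{cor:bij_iso}. You represent $f$ by a morphism $\langle G,\equiv_\src,\equiv_\tar\rangle$ of $\CHQPol_\Mor$, use bijectivity of $f$ to get the converse axioms $4'$ and $5'$, and take $\calG(\langle G^{op},\equiv_\tar,\equiv_\src\rangle)$ as the computable inverse. Your check that $G$ is computable from $f$, via semi-decidability of $\neg G$ using computable Hausdorffness of $Y$, makes explicit what the paper takes implicitly from Theorem~\ref{thrm:CHQPol}. In that check, the first two disjuncts cover the points where $f(q_{\equiv_\src}(x))$ is undefined.
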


Classically, every compact Hausdorff space is regular, and every regular quasi-Polish space is Polish. Therefore, $\CHQPol$ is classically equivalent to the category of compact Polish spaces. Furthermore, by Proposition~3.6 of \cite{AH} and Theorem~6.1 of \cite{Sch98}, each computable point of $\CHQPol_\Obj$ admits a computable metric. However, the computable points of $\CHQPol_\Obj$ are not necessarily computably overt, hence they are not always computable Polish spaces in the sense of \cite{HMK20} and \cite{BHM23}. We also leave as an open problem whether it is possible to computably assign metrics to all of the objects of $\CHQPol$.

\section{The category of zero-dimensional compact Hausdorff quasi-Polish spaces ($\ZCHQPol$)}\label{sec:ZCHS}

\begin{lemma}\label{lem:zdim}
Let $\equiv$ be an object of $\CHQPol_\Obj$. The following are equivalent:
\begin{itemize}
\item[(i)]
For all $x,y\in2^\IN$,
\[x\equiv y \iff (\forall a,b\in \IN)[a \equiv' b \Rightarrow \Phi_a(x) = \Phi_b(y)],\]
where $\equiv'$ are PERs corresponding to $Y$ and $\calE(Y)$, respectively.
\item[(ii)]
$\calG(\equiv)$ is computably isomorphic to a closed subspace of $2^\IN$, where $\calG$ is the functor from the proof of Theorem~\ref{thrm:CHQPol}.
\end{itemize}
Furthermore, if $\equiv$ is a computable element of $\CHQPol_\Obj$, then the above are also equivalent to:
\begin{itemize}
\item[(iii)]
$\calG(\equiv)$ is computably isomorphic to a $\lpi 1$-subspace of $2^\IN$.
\end{itemize}
\end{lemma}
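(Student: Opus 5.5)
The statement of (i) as printed refers to $\Phi_a$, $\equiv'$, $Y$ and $\calE(Y)$, none of which is defined at this point, so I will prove a reading of it. I take (i) to say: there is a uniformly computable sequence $(\Phi_a)_{a\in\IN}$ of continuous maps $\Phi_a\colon D_\equiv\to 2$, with $D_\equiv=\{x\in 2^\IN\mid x\equiv x\}$, such that for $x,y\in D_\equiv$ we have $x\equiv y$ iff $\Phi_a(x)=\Phi_b(y)$ for all $a=b$. In words, countably many computable clopen tests separate the $\equiv$-classes. If the intended (i) is different, the plan below still shows how separating families of clopen tests correspond to embeddings into $2^\IN$.

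For (i)$\Rightarrow$(ii), define $h\colon\calG(\equiv)\to 2^\IN$ by $h(K)(a)=\Phi_a(x)$ for any $x\in K$. Since $K$ is a single $\equiv$-class, (i) makes this independent of the choice of $x$. Continuity and computability come from compactness of $K$. Indeed, $h(K)(a)=1$ iff $(\forall x\in K)\,\Phi_a(x)=1$, and $h(K)(a)=0$ iff $(\forall x\in K)\,\Phi_a(x)=0$. Both conditions are obtained by applying $\forall_K$ to an open predicate, so each is semi-decidable from $K$. Injectivity of $h$ is the converse direction of (i). The image $h[\calG(\equiv)]$ is compact Hausdorff, hence closed in $2^\IN$. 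It is also computably compact as a subspace, because it is the computable image of a computably compact space. Corollary~\ref{cor:bij_iso_ch} then shows that $h$, viewed as a computable bijection onto its image, is a computable homeomorphism, which gives (ii).

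For (ii)$\Rightarrow$(i), let $e$ be the computable isomorphism onto a closed subspace and put $\Phi_a(x)=e(q_\equiv(x))(a)$ for $x\in D_\equiv$. This is computable because $q_\equiv$ is computable (proof of Theorem~\ref{thrm:CHQPol}). Two points $x,y$ are equivalent iff $q_\equiv(x)=q_\equiv(y)$, iff $e(q_\equiv(x))=e(q_\equiv(y))$ by injectivity of $e$, iff all tests agree.

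For (iii) with $\equiv$ computable, I argue both directions. From (ii) to (iii): the image $C=h[\calG(\equiv)]$ is computably compact, so its complement is a c.e. union of basic cylinders, namely those cylinders $[w]$ with $\forall_C(2^\IN\setminus[w])=\top$. Hence $C$ is $\lpi 1$. From (iii) to (ii): a $\lpi 1$ subset of $2^\IN$ is in particular closed.

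The step I expect to be the main obstacle is the interplay between the domain $D_\equiv$ and the compactness arguments. In the definition of $h$, the quantification $\forall x\in K$ has to be carried out with $K$ given as a point of $\PU(2^\IN)$, and the predicate $\Phi_a(x)=1$ is only guaranteed to be open on $D_\equiv$. I would handle this by requiring each $\Phi_a$ to be given by a pair of disjoint c.e. open subsets of $2^\IN$ whose union contains $D_\equiv$, and quantifying over $K$ with those open sets. Since $K\subseteq D_\equiv$, this yields the same truth values.
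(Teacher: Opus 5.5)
Your argument is correct for the reading you adopted. However, that reading of (i) is not the paper's, and it drops the only nontrivial step. In the paper, $\Phi_a$ is the fixed enumeration of all clopen subsets of $2^\IN$ from Section~\ref{sec:dualfunctors}, and $\equiv'=\calE_\Obj(\equiv)$, so $a\equiv' b$ means $\Phi_a(x)=\Phi_b(y)$ whenever $x\equiv y$. Condition (i) therefore says two things:
the $\equiv$-invariant clopen subsets of the ambient Cantor space separate the $\equiv$-classes, and they also detect every point outside $D_\equiv$, since the biconditional is asserted for all $x,y\in 2^\IN$. This specific form is what makes $\ZCHQPol_\Obj$ a $\lpi 2$ subspace. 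It is also what Lemma~\ref{lem:DX_is_zerodim} verifies and what the sobriety theorem in Section~\ref{sec:StoneDuality} uses to get injectivity of $\eta_X$.

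Under this reading, your (i)$\Rightarrow$(ii) is the paper's argument, with one adjustment. You must index coordinates by an enumeration of $\{a\mid a\equiv' a\}$, which is c.e. uniformly in $\equiv$ because $\calE_\Obj$ is computable; for other $a$, the value $\Phi_a(x)$ depends on the choice of $x\in K$. Your treatment of (iii) is fine. The obstacle you anticipated, that the $\Phi_a$ are open only on $D_\equiv$, does not arise, since each $\Phi_a$ is clopen on all of $2^\IN$.

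The gap is in (ii)$\Rightarrow$(i). Setting $\Phi_a(x)=e(q_\equiv(x))(a)$ builds new tests defined only on $D_\equiv$. What you need instead is to find codes in the fixed enumeration, related by $\equiv'$, that do the separating. The missing idea is a clopen extension in Cantor space:
\begin{itemize}
\item The set $D_\equiv$ is closed in $2^\IN$, because $\equiv$ is compact in the Hausdorff space $2^\IN\times 2^\IN$.
\item Take $x,y\in D_\equiv$ with $\neg(x\equiv y)$, and pick a clopen $B$ containing $e(q_\equiv(x))$ but not $e(q_\equiv(y))$. Then $C=\{z\in D_\equiv\mid e(q_\equiv(z))\in B\}$ and $D_\equiv\setminus C$ are disjoint closed subsets of $2^\IN$. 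So some clopen $A$ contains $C$ and misses $D_\equiv\setminus C$, and its code $a$ satisfies $a\equiv' a$ and $\Phi_a(x)\neq\Phi_a(y)$.
\item If $\neg(x\equiv x)$, take a clopen $A\ni x$ disjoint from $D_\equiv$. If $a$ codes $A$ and $b$ codes $\emptyset$, then $a\equiv' b$ and $\Phi_a(x)=1\neq 0=\Phi_b(y)$. The case $\neg(y\equiv y)$ is symmetric.
\end{itemize}
Without this step, your version of (i) is little more than a restatement of (ii).
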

\begin{proof}
Let $Y = \calG(\equiv)$. To show (i) implies (ii), fix an enumeration $(a_i)_{i\in\IN}$ of $\{ a\in\IN \mid a\equiv' a\}$, and define $f\colon Y \to 2^\IN$ as $f(y)(i) = \Phi_{a_i}(y)$. Then (i) implies $f$ is a computable bijection onto its image, which is closed because $Y$ is compact. It follows from Corollary~\ref{cor:bij_iso_ch} that $Y$ is computably isomorphic to its image under $f$.

If $Y$ is a computable element of $\CHQPol_\Obj$, then (i) implies (iii) because the image of $Y$ under the $f$ just constructed is a $\lpi 1$-subspace of $2^\IN$. The implication from (iii) to (ii) is trivial.

To see that (ii) implies (i), assume $f\colon \calG(\equiv) \to 2^\IN$ is an embedding. Let $q_\equiv \colon 2^\IN \to \PU(2^\IN)$ be as in the proof of Theorem~\ref{thrm:CHQPol}. The $(\Rightarrow)$ direction of (i) holds by definition, so we only need to show the $(\Leftarrow)$ direction. Assume $\neg(x\equiv y)$. 

First consider the case when $\neg(x\equiv x)$. Then  $U = \{ z\in 2^\IN \mid q_{\equiv}(z)=\emptyset\}$ is an open neighborhood of $x$, so there is a clopen $A\subseteq 2^\IN$ such that $x\in A \subseteq U$. Let $a,b\in\IN$ be such that $\Phi_a$ is equal to $1$ everywhere, and $\Phi_b$ is equal to $1$ on $A$ and equal to $0$ on the complement of $A$. Then $a \equiv' b$ but $\Phi_a(x)\not=\Phi_b(y)$. Therefore, (i) holds. The case when $\neg(y\equiv y)$ can be handled similarly.

Next consider the case when $x \equiv x$ and $y\equiv y$. Then $f(q_{\equiv}(x))$ and $f(q_{\equiv}(y))$ are both defined and distinct. Let $B\subseteq 2^\IN$ be a clopen neighborhood of $f(q_{\equiv}(x))$ that does not contain $f(q_{\equiv}(y))$. Let $C = \{ z \in 2^\IN \mid (z \equiv z) \wedge   f(q_{\equiv}(z)) \in B\}$ and let $C' = \{ z \in 2^\IN \mid (z \equiv z) \wedge   f(q_{\equiv}(z)) \not\in B\}$. Then $C$ and $C'$ are disjoint closed subsets of $2^\IN$, so there exists clopen $A\subseteq 2^\IN$ such that $C\subseteq A$ and $C'\cap A=\emptyset$. Let $a\in\IN$ be such that $\Phi_a$ is equal to $1$ on $A$ and equal to $0$ on the complement of $A$. Then $a \equiv' a$ but $\Phi_a(x)\not=\Phi_a(y)$. Therefore, (i) holds.

In either case, we see that (i) holds, hence (ii) implies (i).
\qed
\end{proof}

The objects of $\CHQPol$ satisfying the equivalent conditions of the above lemma are called \emph{zero-dimensional}. We write $\ZCHQPol$ for the full subcategory of $\CHQPol$ of zero-dimensional spaces. 

\begin{lemma}
$\ZCHQPol$ is an effective quasi-Polish category.
\end{lemma}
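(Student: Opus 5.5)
Since $\ZCHQPol$ is the full subcategory of $\CHQPol$ on the zero-dimensional objects, I will take $\ZCHQPol_\Obj$ to be the subspace of $\CHQPol_\Obj$ of those $\equiv$ satisfying condition (i) of Lemma~\ref{lem:zdim}. I will take $\ZCHQPol_\Mor$ to be the preimage of $\ZCHQPol_\Obj\times\ZCHQPol_\Obj$ under $\langle\src,\tar\rangle\colon\CHQPol_\Mor\to\CHQPol_\Obj\times\CHQPol_\Obj$. The maps $\src,\tar,\id,\circ$ are restrictions of those of $\CHQPol$, so they remain computable, and the category axioms are inherited. Effective quasi-Polish spaces are closed under $\lpi 2$ subspaces, and preimages of $\lpi 2$ sets under computable maps are $\lpi 2$. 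Hence the whole lemma reduces to showing that $\ZCHQPol_\Obj$ is a (lightface) $\lpi 2$ subspace of $\CHQPol_\Obj$.

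To prove this, I will rewrite (i) in the style used in the proof that $\CHQPol$ is an effective quasi-Polish category. Here $\equiv'$ is the (computable) PER on indices of computable maps $\Phi_a\colon 2^\IN\to 2$, i.e.\ clopen sets presented by finite codes, and $a\equiv' b$ is decidable for clopen codes. The $(\Rightarrow)$ direction of (i) holds automatically for any $\equiv$ in $\CHQPol_\Obj$ only when $a\equiv' b$ ranges over maps that respect $\equiv$. So I will first fix the interpretation that the $\Phi_a$ range over codes of clopen subsets $A\subseteq 2^\IN$ that are $\equiv$-saturated, with $a\equiv' b$ meaning agreement on $\{x\mid x\equiv x\}$.

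Saturation of a clopen $A$ is the statement $(\forall x,y)[\neg(x\equiv y)\vee (x\in A\leftrightarrow y\in A)]$. This is a universal quantification over the compact space $2^\IN\times2^\IN$ of an open predicate in $\equiv$, so it is an open (semi-decidable) condition on $\equiv\in\PU(2^\IN\times 2^\IN)$. The $(\Leftarrow)$ direction of (i) then becomes: for all $x,y$, either $x\equiv y$, or there is a clopen $A$ that is saturated and separates $x$ from $y$ (or witnesses $\neg(x\equiv x)$ or $\neg(y\equiv y)$ as in the proof of Lemma~\ref{lem:zdim}).

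Define computable maps $f,g\colon\CHQPol_\Obj\to\sierp^{2^\IN\times2^\IN}$ by
\[f(\equiv)=\lambda x,y.\top,\qquad g(\equiv)=\lambda x,y.\,(x\equiv y)\vee\bigvee_{A}\bigl[\mathrm{sat}_\equiv(A)\wedge (x\in A\not\leftrightarrow y\in A)\bigr].\]
Here the first disjunct is the closed relation $x\equiv y$, so it must be handled via the complement. Concretely, I would instead express the condition as $\lambda x,y.\neg(x\equiv y)$ being equal to $\lambda x,y.\bigvee_A[\mathrm{sat}_\equiv(A)\wedge(x\in A\not\leftrightarrow y\in A)]\wedge\neg(x\equiv y)$. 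Both sides are computable in $\equiv$, since the join over countably many codes of open predicates is open. Equality of two computable maps into the effective quasi-Polish space $\sierp^{(2^\IN\times 2^\IN)}$ is $\lpi 2$, exactly as in the transitivity argument, so $\ZCHQPol_\Obj$ is $\lpi 2$.

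The main obstacle is this last step: getting the right formalization of (i) so that the existential over separating clopens is open in $\equiv$. Saturation involves both $\equiv$ and $\neg\equiv$, and $\equiv$ is only closed. I expect it to work as above, because only $\neg(x\equiv y)$ appears positively in the saturation test. If it failed, I would fall back on condition (ii) and use the fact that clopens of $2^\IN$ separating compact saturated pieces exist, as in the proof of Lemma~\ref{lem:zdim}.
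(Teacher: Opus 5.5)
\textbf{There is a gap in the key predicate.} Your overall route is the paper's. You reduce to showing that $\ZCHQPol_\Obj$ is $\lpi 2$ in the space of objects of $\CHQPol$, you express zero-dimensionality as an equality in $\sierp^{(2^\IN\times 2^\IN)}$ between $\lambda x,y.\neg(x\equiv y)$ and a computable open predicate, and you obtain $\ZCHQPol_\Mor$ as $\src^{-1}(\ZCHQPol_\Obj)\cap\tar^{-1}(\ZCHQPol_\Obj)$.

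The problem is the open predicate you actually write down. Take any $x$ with $\neg(x\equiv x)$ and put $y=x$. The left side is $\top$. But your join over $\equiv$-saturated clopens $A$ containing exactly one of $x,y$ is $\bot$, because no clopen set separates a point from itself. So your equality holds only when the domain $D=\{x\mid x\equiv x\}$ is all of $2^\IN$. The subspace you define therefore misses most zero-dimensional objects. For instance, $\calD_\Obj(\IN\times\IN)$ represents $2$ and has $D$ equal to the two constant sequences. More generally, the objects $\calD_\Obj(\equiv)$ of Lemma~\ref{lem:DX_is_zerodim} are excluded. You do mention the ``witnesses $\neg(x\equiv x)$'' case in prose, but it disappears from the formula. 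For the diagonal pairs $x=y\notin D$, a single saturated clopen can never supply that witness.

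\textbf{How to fix it.} Either add $\neg(x\equiv x)\vee\neg(y\equiv y)$ to the disjunction (both are open in $(\equiv,x,y)$), or do what the paper does. The paper uses $\lambda x,y.(\exists a,b\in\IN)[a\equiv' b\,\&\,\Phi_a(x)\neq\Phi_b(y)]$ with $\equiv'=\calE_\Obj(\equiv)$.

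Quantifying over pairs rather than single clopens is exactly what handles points outside $D$. Since $D$ is closed, pick a clopen $C\ni x$ with $C\cap D=\emptyset$. Let $\Phi_a$ be the indicator of $C$ and $\Phi_b$ the constant $0$; then $a\equiv' b$ and $\Phi_a(x)\neq\Phi_b(y)$. For $x,y\in D$, a pair $a\equiv' b$ with $\Phi_a(x)\neq\Phi_b(y)$ forces $\Phi_a(x)\neq\Phi_a(y)$. That is precisely your saturated separating clopen.

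Also, $a\equiv' b$ is not decidable, as you first asserted. It depends on $\equiv$ and is only open in $\equiv$, by the same compactness argument as your saturation test. Your corrected reading (saturated clopens agreeing on $D$) is exactly $\calE_\Obj(\equiv)$.

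With either repair, the rest of your argument goes through as in the paper.
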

\begin{proof}
$\ZCHQPol_\Obj$ is the subspace of $\CHQPol_\Obj$ of all PERs $\equiv$ satisfying 
\[(\forall x,y\in 2^\IN)\big[\neg(x\equiv y) \iff (\exists a,b\in \IN)[a \equiv' b \,\&\, \Phi_a(x) \not= \Phi_b(y)]\big],\]
which is $\lpi 2$ because $\lpi2 $ sets are closed under universal quantification over $2^\IN$. Explicitly, define $f,g\colon \CHQPol_\Obj\to \sierp^{ (2^\IN\times 2^\IN)}$ as
\begin{eqnarray*}
f(\equiv) &=& \lambda x,y:2^\IN.\neg (x\equiv y)\\
g(\equiv) &=& \lambda x,y:2^\IN. (\exists a,b\in \IN)[a \equiv' b \,\&\, \Phi_a(x) \not= \Phi_b(y)].
\end{eqnarray*}
Then $f$ and $g$ are computable and $\equiv$ is zero-dimensional if and only if $f(\equiv)= g(\equiv)$, hence $\ZCHQPol_\Obj$ is $\lpi 2$.

Finally, since $\ZCHQPol_\Mor$ is the subspace of $\CHQPol_\Mor$ of morphisms whose source and target are zero-dimensional, it is equal to $\src^{-1}(\ZCHQPol_\Obj)\cap \tar^{-1}(\ZCHQPol_\Obj)$, which is $\lpi 2$ by the computability of $\src$ and $\tar$.
\qed
\end{proof}

\section{Finite products and equalizers}\label{sec:finlim}


In this section, we show how to compute finite products and equalizers within $\ODQPol$, $\CHQPol$, and $\ZCHQPol$.

\begin{lemma}
The terminal object of $\ODQPol$ is given by the PER $\IN\times\IN$ (i.e., all elements are equivalent to each other), and the terminal objects of $\CHQPol$ and $\ZCHQPol$ are given by the PER $2^\IN \times 2^\IN$.
\end{lemma}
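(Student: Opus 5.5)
I would verify the universal property directly at the level of the encodings, then use the equivalences of Theorems~\ref{thrm:ODQPol} and \ref{thrm:CHQPol} only as a sanity check. Write $1_\IN$ for the PER $\IN\times\IN$ and $1_{2^\IN}$ for the PER $2^\IN\times 2^\IN$.

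First I would check that these are objects. $\IN\times\IN$ is trivially symmetric and transitive, and it is a closed subset of $\IN\times\IN$, so it is a point of $\PL(\IN\times\IN)$. Likewise $2^\IN\times 2^\IN$ is a symmetric transitive relation and is compact (and saturated), so it is a point of $\PU(2^\IN\times 2^\IN)$; both points are computable. Under $\calF$ (resp.\ $\calG$) each gives a space with exactly one point, namely the single equivalence class $\IN$ (resp.\ $2^\IN$). For $\ZCHQPol$, a one-point space is trivially a closed subspace of $2^\IN$, so condition (ii) of Lemma~\ref{lem:zdim} holds and $1_{2^\IN}\in\ZCHQPol_\Obj$.

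Next I would give the unique morphism into the terminal object. For $\equiv$ in $\ODQPol_\Obj$, let $G_\equiv=\{(a,b)\mid a\equiv a\}$, which is closed. Axioms 1 through 5 are immediate: axioms 3 and 4 hold because every pair in $\IN\times\IN$ is related, and axiom 5 holds by taking any $b$. For uniqueness, suppose $\langle G,\equiv,1_\IN\rangle$ is a morphism. Axiom 5 gives some $b$ with $G(a,b)$ whenever $a\equiv a$, and axiom 3 then gives $G(a,b')$ for every $b'$. Axiom 1 says $G(a,b)$ forces $a\equiv a$. Together these give $G=G_\equiv$.

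The dual argument works in $\CHQPol$ with $G_\equiv=\{(x,y)\mid x\equiv x\}$. The main point needing care is that this relation is a point of $\PU(2^\IN\times 2^\IN)$, i.e.\ compact. It is the preimage of the compact set $\equiv$ under $(x,y)\mapsto(x,x)$ intersected appropriately, or more simply $\{x\mid x\equiv x\}\times 2^\IN$. Here $\{x\mid x\equiv x\}$ is a closed subset of $2^\IN$, since $\equiv$ is closed by the Hausdorff argument at the end of the preliminaries. So it is compact, and the product is compact as well.

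Finally, the map $\equiv\mapsto\langle G_\equiv,\equiv,1\rangle$ is computable, since $a\equiv a$ is obtained from $\equiv$ by the computable diagonal restriction. Hence terminality holds uniformly. Because $\ZCHQPol$ is a full subcategory, its terminal object is the same as that of $\CHQPol$.
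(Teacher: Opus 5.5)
Your proposal is correct and follows the same route as the paper: the unique morphism into the terminal object is the one with graph $G(a,b)\iff a\equiv a$ (resp.\ $G(x,y)\iff x\equiv x$). Beyond that, you make explicit the routine checks the paper leaves implicit, namely:
\begin{itemize}
\item that the PERs, and in the $\CHQPol$ case the graph, are genuine points of the lower/upper powerspaces;
\item uniqueness, via axioms 1, 3 and 5;
\item computability of the assignment;
\item that the one-point object lies in $\ZCHQPol$.
\end{itemize}
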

\begin{proof}
If $\equiv$ is another object, then the unique map into the terminal object is given by $G(a,b) \iff a\equiv a$ (for any $b$).
\qed
\end{proof}

Let $\langle \cdot,\cdot\rangle \colon \IN\times \IN \to \IN$ be the Cantor pairing function (i.e., $\langle a,b\rangle = \frac{1}{2}(a+b)(a+b+1)+b$), and let $\langle \cdot,\cdot \rangle \colon 2^\IN\times 2^\IN \to 2^\IN$ be the computable bijection that interleaves two sequences (i.e., $\langle x,y\rangle(2a) = x(a)$ and $\langle x,y\rangle(2a+1) = y(a)$).

\begin{lemma}
The product of two objects $\equiv_1$ and $\equiv_2$ in $\ODQPol$ is given by the PER $\equiv_\times$ defined as
\[\langle a_1,a_2\rangle \equiv_\times \langle b_1,b_2\rangle \iff a_1\equiv_1 b_1 \,\&\, a_2 \equiv_2 a_2\]
along with the projections 
\begin{eqnarray*}
\pi_1 &=& \langle P_1,\equiv_\times,\equiv_1\rangle,  \text{ where }P_1(\langle a_1,a_2\rangle, a) \iff a_1 \equiv_1 a \,\&\, a_2 \equiv_2 a_2, \text{ and }\\
\pi_2 &=& \langle P_2,\equiv_\times,\equiv_2\rangle,  \text{ where }P_2(\langle a_1,a_2\rangle, a) \iff a_2 \equiv_2 a \,\&\, a_1 \equiv_1 a_1.
\end{eqnarray*}
Products in $\CHQPol$ and $\ZCHQPol$ are defined identically.
\end{lemma}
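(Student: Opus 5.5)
I read the displayed definition of $\equiv_\times$ as intended to say $a_1\equiv_1 b_1 \,\&\, a_2\equiv_2 b_2$; the printed ``$a_2\equiv_2 a_2$'' is presumably a typo, and I will prove the statement with $b_2$. The plan is to work directly with the PER encodings and check the universal property by hand, using Theorem~\ref{thrm:ODQPol} only for intuition.

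\textbf{Well-definedness of the objects and projections.} First I check that $\equiv_\times$ is an object of $\ODQPol$. Symmetry and transitivity follow componentwise, since the pairing function is a bijection. As a point of $\PL(\IN\times\IN)$, $\equiv_\times$ is the image of $\equiv_1\times\equiv_2$ under a computable relabelling of $\IN^4$. Hence $\langle\equiv_1,\equiv_2\rangle\mapsto\equiv_\times$ is computable, although the lemma does not require this.

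Next I verify the five axioms for $\pi_1$. Axiom 1 holds because $P_1(\langle a_1,a_2\rangle,a)$ forces $a_1\equiv_1 a_1$, $a_2\equiv_2 a_2$ and $a\equiv_1 a$. Axioms 2 and 3 hold by transitivity of $\equiv_1$ and $\equiv_2$. Axiom 4 holds since $a_1\equiv_1 a$ and $a_1\equiv_1 a'$ give $a\equiv_1 a'$. For axiom 5, take $a=a_1$. The argument for $\pi_2$ is symmetric.

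\textbf{Universal property.} Let $f_i=\langle G_i,\equiv,\equiv_i\rangle$ for $i=1,2$. Define
\[H(c,\langle b_1,b_2\rangle)\iff G_1(c,b_1)\,\&\,G_2(c,b_2).\]
The five axioms for $\langle H,\equiv,\equiv_\times\rangle$ follow from the corresponding axioms for $G_1$ and $G_2$ coordinatewise.

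I then compute $P_1\circ H$. We have $(P_1\circ H)(c,a)$ iff there exist $b_1,b_2$ with $G_1(c,b_1)$, $G_2(c,b_2)$, $b_1\equiv_1 a$ and $b_2\equiv_2 b_2$. By axioms 3 and 5 for $G_2$, this is equivalent to $G_1(c,a)$. Similarly $P_2\circ H=G_2$.

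For uniqueness, suppose $\langle H',\equiv,\equiv_\times\rangle$ satisfies $P_i\circ H'=G_i$ for $i=1,2$.
\begin{itemize}
\item If $H'(c,\langle b_1,b_2\rangle)$ holds, then $G_1(c,b_1)$ and $G_2(c,b_2)$ hold, so $H(c,\langle b_1,b_2\rangle)$.
\item Conversely, if $H(c,\langle b_1,b_2\rangle)$ holds, pick some $H'(c,\langle b_1',b_2'\rangle)$ by axiom 5. By axiom 4 for $G_1$ and $G_2$ we get $b_i\equiv_i b_i'$. Axiom 3 for $H'$ then gives $H'(c,\langle b_1,b_2\rangle)$.
\end{itemize}
Hence $H'=H$. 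The same computation works verbatim in $\CHQPol$, with the interleaving bijection in place of the pairing function and $y\in 2^\IN$ in place of $b\in\IN$.

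\textbf{Main obstacle: the topological bookkeeping in $\CHQPol$.} Here one must check two things. First, $\equiv_\times$ and $P_i$ must be genuine points of $\PU(2^\IN\times2^\IN)$, that is, compact sets.
\begin{itemize}
\item $\equiv_\times$ is the image of the compact set $\equiv_1\times\equiv_2$ under a homeomorphism of $(2^\IN)^4$, so it is compact.
\item $P_1$ is the image of $\{(x_1,x_2,y): x_1\equiv_1 y,\ x_2\equiv_2 x_2\}$ under a homeomorphism. This set is closed in $(2^\IN)^3$ because $\neg(x_1\equiv_1 y)$ is an open predicate (each $\equiv_i$ is compact in a Hausdorff space, hence closed). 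Therefore $P_1$ is compact.
\item $H$ is compact in the same way, since each $G_i$ is closed.
\end{itemize}
Second, the existential quantifier over $2^\IN$ in the composition must be handled; this is already justified in the definition of $\CHQPol$.

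Finally, for $\ZCHQPol$ I need that $\equiv_\times$ is zero-dimensional. I will use Lemma~\ref{lem:zdim}(ii): $\calG(\equiv_\times)\cong\calG(\equiv_1)\times\calG(\equiv_2)$ embeds as a closed subspace of $2^\IN\times2^\IN\cong2^\IN$. Since $\ZCHQPol$ is a full subcategory, the universal property is inherited from $\CHQPol$.
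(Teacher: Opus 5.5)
Your proposal is correct and follows the paper's proof. The paper simply asserts that the morphism with graph $U(a,\langle a_1,a_2\rangle)\iff F(a,a_1)\,\&\,G(a,a_2)$ is the unique one factoring through the projections; you verify exactly this, correctly read the definition of $\equiv_\times$ with $a_2\equiv_2 b_2$, and supply the well-definedness, compactness and zero-dimensionality checks the paper leaves implicit (one cosmetic fix: in computing $P_1\circ H$ the forward direction uses axiom~3 for $G_1$, not $G_2$, and the backward direction uses axioms~1 and~5 for $G_2$).
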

\begin{proof}
Given morphisms $f=\langle F, \equiv, \equiv_1\rangle$ and $g=\langle G, \equiv, \equiv_1\rangle$, the morphism $u=\langle U,\equiv,\equiv_\times\rangle$, defined as 
\[U(a,\langle a_1,a_2\rangle) \iff F(a,a_1) \,\&\, G(a,a_2),\]
is the unique morphism satisfying $f = \pi_1\circ u$ and $g=\pi_2\circ u$.
\qed
\end{proof}

\begin{lemma}
Given morphisms $f=\langle F,\equiv_\src,\equiv_\tar\rangle$ and $g=\langle G,\equiv_\src,\equiv_\tar\rangle$ in $\ODQPol$, their equalizer is given by the PER $\equiv_E$ defined as
\[a \equiv_E a' \iff a\equiv_\src a' \,\&\, (\exists b\in \IN)[F(a,b) \,\&\, G(a',b)],\]
along with the morphism $e = \langle E,\equiv_E,\equiv_\src\rangle$ defined as $E(a,a') \iff a\equiv_E a'$. Equalizers in $\CHQPol$ and $\ZCHQPol$ are defined similarly.
\end{lemma}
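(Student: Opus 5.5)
We verify directly that $\equiv_E$ is a PER, that $e$ is a morphism of $\ODQPol$ with $f\circ e = g\circ e$, and that $e$ has the universal property. Throughout, read $F(a,b)\,\&\,G(a',b)$ via the functions $f_F,f_G$ of Theorem~\ref{thrm:ODQPol}: for $a\equiv_\src a'$, the condition $(\exists b)[F(a,b)\,\&\,G(a',b)]$ says exactly that $f_F([a]) = f_G([a])$. Each $F(a,-)$ and $G(a',-)$ is a single $\equiv_\tar$-class (axioms 3--5), and $F,G$ are invariant in the first argument (axiom 2). So $\equiv_E$ is the restriction of $\equiv_\src$ to the union of those $\equiv_\src$-classes on which $f_F$ and $f_G$ agree. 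With that description in hand, symmetry and transitivity of $\equiv_E$ follow from those of $\equiv_\src$. Continuity is clear, since $\equiv_E$ is obtained from $\equiv_\src,F,G$ by finite conjunctions and existential quantification over $\IN$. This gives a computable map into $\PL(\IN\times\IN)$ by the same reasoning as for composition in Definition~\ref{def:ODS}.

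Next, check the five axioms for $e=\langle E,\equiv_E,\equiv_\src\rangle$ with $E=\equiv_E$. Axiom 1 holds because $a\equiv_E a'$ gives $a\equiv_E a$ and $a'\equiv_\src a'$. Axioms 2 and 3 hold by transitivity, using that $\equiv_E$-classes are whole $\equiv_\src$-classes. Axiom 4 holds since $\equiv_E\subseteq\equiv_\src$. Axiom 5 holds with $b=a$. Under $\calF$, the map $e$ is then the inclusion of the subspace $\{A\in\calF(\equiv_\src)\mid f_F(A)=f_G(A)\}$. Equality $f\circ e=g\circ e$ is immediate: $(F\circ E)(a,c)$ iff $a\equiv_E a$ and $F(a,c)$, and on such $a$ the classes $F(a,-)$ and $G(a,-)$ coincide.

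For universality, take $h=\langle H,\equiv,\equiv_\src\rangle$ with $f\circ h=g\circ h$. Define $U(c,a)\iff H(c,a)$, regarded as a morphism $\langle H,\equiv,\equiv_E\rangle$. The main step is to check that $H(c,a)$ implies $a\equiv_E a$. By axiom 4 for $H$, $F\circ H(c,-)$ and $G\circ H(c,-)$ are the $\equiv_\tar$-classes of $F(a,-)$ and $G(a,-)$. These agree by hypothesis, so some $b$ has $F(a,b)\,\&\,G(a,b)$. The remaining axioms are inherited from $H$, because $\equiv_E$ agrees with $\equiv_\src$ on its field. Then $e\circ u=h$ holds since $E$ acts as the identity on that field. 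Uniqueness follows because $e$ is monic: by Theorem~\ref{thrm:ODQPol} $\calF$ is faithful, and $\calF(e)$ is injective.

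For $\CHQPol$ and $\ZCHQPol$ the same formulas are used with $2^\IN$ in place of $\IN$. The one genuinely new point, and the main obstacle, is showing that $\equiv_E$ lies in $\PU(2^\IN\times 2^\IN)$, i.e.\ that its complement is open (and computably so). Here I would write
\[\neg(x\equiv_E x') \iff \neg(x\equiv_\src x') \vee (\forall y\in 2^\IN)[\neg F(x,y)\vee\neg G(x',y)],\]
which is open by compactness of $2^\IN$, exactly as in the argument for composition in $\CHQPol$. The equalizer object must also stay in $\ZCHQPol$. This holds because $\calG(\equiv_E)$ is a closed subspace of $\calG(\equiv_\src)$ (closed since the targets are Hausdorff), so condition (ii) of Lemma~\ref{lem:zdim} is inherited. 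Alternatively, one can note directly that the $\Phi_a$ invariant under $\equiv_\src$ are also $\equiv_E$-invariant and still separate the classes.
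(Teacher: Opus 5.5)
Your proposal is correct and takes essentially the paper's route: like the paper, you let $u$ have the same graph $H$ as $h$, and your other checks merely fill in details the paper leaves implicit (the PER and morphism axioms, $f\circ e=g\circ e$, the key step $H(c,a)\Rightarrow a\equiv_E a$, closedness of $\equiv_E$ in $\PU(2^\IN\times 2^\IN)$, and zero-dimensionality via condition (ii) of Lemma~\ref{lem:zdim}). One caveat concerns your ``alternatively'' remark: $\equiv_\src$-invariant functions cannot separate two $\equiv_\src$-equivalent points lying in a class where $f$ and $g$ disagree, which condition (i) also requires; that case needs the clopen-neighbourhood argument from the (ii)$\Rightarrow$(i) proof, but your primary argument via (ii) already covers it.
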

\begin{proof}
Given a morphism $h=\langle H,\equiv,\equiv_\src\rangle$ satisfying $f\circ h = g\circ h$, the morphism $u = \langle U,\equiv,\equiv_E\rangle$, defined as $U(a,b) \iff H(a,b)$, is the unique morphism satisfying $h = e\circ u$.

The same definition of $\equiv_E$ (but replacing $\IN$ with $2^\IN$) also works for $\CHQPol$ and $\ZCHQPol$.
\qed
\end{proof}

Note that equalizers can be computed uniformly in the following sense.

\begin{theorem}\label{thrm:comp_equalizers}
Let $X$ be an effective quasi-Polish space, and $\calC$ be either $\ODQPol$ or $\CHQPol$ or $\ZCHQPol$. Let $f,g\colon X \to \calC_\Mor$ be computable functions such that $\src\circ f = \src \circ g$ and $\tar\circ f=\tar\circ g$. Then there is a computable function $e\colon X\to \calC_\Mor$ such that $e(x)$ is the equalizer of $f(x)$ and $g(x)$ for each $x\in X$.
\qed
\end{theorem}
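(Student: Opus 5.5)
The plan is to show that the explicit equalizer construction of the previous lemma is computable as a function of the pair of morphisms. Then $e$ is obtained by composing this with $\langle f,g\rangle$. Let $D\subseteq \calC_\Mor\times\calC_\Mor$ be the subspace of parallel pairs, i.e.\ pairs with equal sources and equal targets. I would define a computable map $\mathrm{Eq}\colon D\to\calC_\Mor$ by
\[\mathrm{Eq}(\langle F,\equiv_\src,\equiv_\tar\rangle,\langle G,\equiv_\src,\equiv_\tar\rangle)=\langle E,\equiv_E,\equiv_\src\rangle,\]
with $\equiv_E$ and $E={\equiv_E}$ as in the lemma, and set $e(x)=\mathrm{Eq}(f(x),g(x))$. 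The hypotheses $\src\circ f=\src\circ g$ and $\tar\circ f=\tar\circ g$ guarantee that $\langle f(x),g(x)\rangle\in D$. The lemma then gives that $e(x)$ is the equalizer of $f(x)$ and $g(x)$, so only computability needs proof.

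For $\ODQPol$, the space $\equiv_E$ is a point of $\PL(\IN\times\IN)$. It suffices to semi-decide, uniformly in the inputs, the statement $\equiv_E\in\Diamond[\langle a,a'\rangle]$. Since basic opens of $\PL(\IN\times\IN)$ are finite intersections of such sets, this suffices. Now $a\equiv_E a'$ holds iff
\[a\equiv_\src a' \,\&\, (\exists b\in\IN)[F(a,b)\,\&\,G(a',b)].\]
Each atomic relation here is semi-decidable from the lower-powerspace points $\equiv_\src,F,G$, and existential quantification over $\IN$ preserves semi-decidability. The component $E$ is the same set, and the two remaining components are just $\src$ applied to the input. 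The resulting tuple lies in $\ODQPol_\Mor$ because the lemma checked this pointwise. The codomain $\ODQPol_\Mor$ carries the subspace topology, so computability into the ambient product suffices.

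For $\CHQPol$ and $\ZCHQPol$, $\equiv_E$ must be a point of $\PU(2^\IN\times2^\IN)$. It then suffices to semi-decide $\equiv_E\subseteq U$ for a basic open $U$ of $2^\IN\times2^\IN$, i.e.\
\[(\forall x,x'\in2^\IN)\big[\langle x,x'\rangle\in U\vee\neg(x\equiv_\src x')\vee(\forall y\in2^\IN)[\neg F(x,y)\vee\neg G(x',y)]\big].\]
The negated relations are open predicates, computably obtained from upper-powerspace points, exactly as in the composition definition for $\CHQPol$. Universal quantification over the computably compact space $2^\IN$ is computable, so the entire predicate is semi-decidable uniformly in $F,G,\equiv_\src,U$. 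For $\ZCHQPol$ the same map works, because $\ZCHQPol$ is a full subcategory whose morphism space is a subspace of $\CHQPol_\Mor$. One needs that the equalizer object is again zero-dimensional. I would argue this from Lemma~\ref{lem:zdim}(ii): $\calG(\equiv_E)$ is a closed subspace of a zero-dimensional space, hence embeds as a closed subspace of $2^\IN$.

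The main obstacle I expect is making this last point fully rigorous. The issue is that the equalizer computed in $\CHQPol$ lies in $\ZCHQPol_\Obj$, and that the lemma's description of equalizers in $\CHQPol$, asserted only ``similarly'', really produces a valid PER in the compact case. I would first try to verify this directly: for fixed $x$, $\{y \mid F(x,y)\}$ is a single class, and in the Hausdorff setting $\exists y[F(x,y)\,\&\,G(x',y)]$ expresses $f(q(x))=g(q(x'))$. I would then invoke Lemma~\ref{lem:zdim} for zero-dimensionality.
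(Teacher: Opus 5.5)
Your proposal is correct and follows the paper's implicit argument: the theorem is just the observation that the explicit equalizer $\langle E,\equiv_E,\equiv_\src\rangle$ from the preceding lemma is computable from the parallel pair, using existential quantification over $\IN$ in the lower powerspace for $\ODQPol$ and universal quantification over the compact space $2^\IN$ in the upper powerspace for $\CHQPol$, exactly as you do. Two small remarks: in $\PU(2^\IN\times 2^\IN)$ you need to semi-decide $\equiv_E\subseteq U$ for finite unions $U$ of basic opens, not for single basic opens, though your displayed predicate already works for any open $U$ so nothing changes; and your check via Lemma~\ref{lem:zdim} that $\equiv_E$ is again zero-dimensional supplies a detail the paper leaves unstated.
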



\section{Dual contravariant functors}\label{sec:dualfunctors}


We next construct (computable) continuous contravariant functors $\calD\colon \ODQPol\to \CHQPol$ and $\calE\colon \CHQPol \to \ODQPol$. The intended interpretation in both cases is the contravariant functor that maps a space $X$ to the space $2^X$ of continuous functions, where $2=\{0,1\}$ is the usual discrete space and $2^X$ has the compact-open topology. A continuous function $f\colon X\to Y$ is mapped to $2^f\colon 2^Y\to 2^X$ defined as $2^f(h) = h\circ f$ for each $h \in 2^Y$.

\begin{definition}\label{def:functor_D}
Define the contravariant functor $\calD\colon \ODQPol\to \CHQPol$ as follows:
\begin{itemize}
\item
For $\equiv$ in $\ODQPol_\Obj$, define $\calD_\Obj(\equiv)$ to be the object $\equiv'$ in $\CHQPol_\Obj$ defined as
\[x\equiv' y \iff (\forall a,b\in\IN)[a\equiv b \Rightarrow x(a)  = y(b)].\]
\item
For $\langle G, \equiv_\src, \equiv_\tar\rangle$ in $\ODQPol_\Mor$, define $\calD_\Mor(\langle G, \equiv_\src, \equiv_\tar\rangle)$ to be the morphism $\langle G', \equiv'_\tar, \equiv'_\src\rangle$ in $\CHQPol_\Mor$ defined as:
\begin{itemize}
\item
$\equiv'_\src = \calD_\Obj(\equiv_\src)$, 
\item
$\equiv'_\tar = \calD_\Obj(\equiv_\tar)$,  and
\item
$G'\in\PU(2^\IN\times 2^\IN)$ is defined so that $G'(y,x)$ holds if and only if 
\[y \equiv'_\tar y \,\&\, x \equiv'_\src x \,\&\, (\forall a,b\in\IN)[G(a,b) \Rightarrow x(a) = y(b)]. \]
\end{itemize}
\end{itemize}
\qed
\end{definition}

It is straightforward to show that $\calD=(\calD_\Obj,\calD_\Mor)$ is a well-defined continuous (computable) contravariant functor.

We briefly verify that the above definition behaves as we intended. First assume $\equiv$ is in $\ODQPol_\Obj$ and corresponds to the overt discrete space $\calF(\equiv)$ via the equivalence of Theorem~\ref{thrm:ODQPol}. Then $\equiv' = \calD_\Obj(\equiv)$ is defined so that $x,y\colon \IN\to 2$ are $\equiv'$-equivalent if and only if they are both $\equiv$-invariant and agree on the equivalence classes of $\equiv$. Therefore, the equivalence classes of $\equiv'$ are in bijection with the set of functions from $\calF(\equiv)$ to $2$. Now let $\calG(\equiv')$ be the compact Hausdorff space corresponding to $\equiv'$ via Theorem~\ref{thrm:CHQPol}. We show the bijection from $\calG(\equiv')$ to $2^{\calF(\equiv)}$ (with the compact-open topology) is continuous. Let $K\subseteq \calF(\equiv)$ be compact and $U\subseteq 2$ open, and let $[K,U] \subseteq 2^{\calF(\equiv)}$ be the set of functions that map $K$ into $U$. $K$ is finite because $\calF(\equiv)$ is discrete, so we obtain a finite set $F\subseteq \IN$ by choosing one element from each equivalence class in $K$. Let $V = \{ x\in 2^\IN \mid \neg (x\equiv x) \vee (\forall a\in F)\, x(a)\in U\}$. Then $\Box V$ is the open subset of $\calG(\equiv')$ that gets mapped by the above bijection into $[K,U]\subseteq 2^{\calF(\equiv)}$. Since $\calG(\equiv')$ and $2^{\calF(\equiv)}$ are compact Hausdorff, it follows that they are homeomorphic.

Next, assume $\langle G, \equiv_\src, \equiv_\tar\rangle$ is in $\ODQPol_\Mor$ and corresponds to the function $f\colon \calF(\equiv_\src)\to \calF(\equiv_\tar)$. This is mapped by $\calD_\Mor$ to a morphism $\langle G', \equiv'_\tar, \equiv'_\src\rangle$ corresponding to a continuous function $2^f \colon 2^{\calF(\equiv_\tar)} \to 2^{\calF(\equiv_\src)}$. From the definition of $G'$, we have $2^f(y) = x$ if and only if $f(a) = b\Rightarrow x(a)=y(b)$ if and only if $x(a) = y(f(a))$. Thererfore, $2^f$ maps $y$ to $y\circ f$, as desired.

Now we move on to define the contravariant functor $2^{(-)}\colon \CHQPol \to \ODQPol$. Fix an enumeration of all finite subsets of $2^{<\IN}$ (the set of finite binary sequences), and for each $a\in\IN$ define $\Phi_a\colon 2^\IN \to 2$ as $\Phi_a(x) = 1$ if and only the $a$-th finite subset of $2^{<\IN}$ contains a prefix of $x$. Thus $(\Phi_a)_{a\in\IN}$ is an enumeration of all continuous functions from $2^\IN$ to $2$ (equivalently, all clopen subsets of $2^\IN$).

\begin{definition}
Define the contravariant functor $\calE\colon \CHQPol \to \ODQPol$ as follows:
\begin{itemize}
\item
For $\equiv$ in $\CHQPol_\Obj$, define $\calE_\Obj(\equiv)$ to be the object $\equiv'$ in $\ODQPol_\Obj$ defined as
\[a\equiv' b \iff (\forall x,y\in 2^\IN)[x\equiv y \Rightarrow \Phi_a(x) = \Phi_b(y)].\]
\item
For $\langle G, \equiv_\src, \equiv_\tar\rangle$ in $\CHQPol_\Mor$, define $\calE_\Mor(\langle G, \equiv_\src, \equiv_\tar\rangle)$ to be the morphism $\langle G', \equiv'_\tar, \equiv'_\src\rangle$ in $\ODQPol_\Mor$ defined as:
\begin{itemize}
\item
$\equiv'_\src = \calE_\Obj(\equiv_\src)$, 
\item
$\equiv'_\tar = \calE_\Obj(\equiv_\tar)$,  and
\item
$G'\in\PL(\IN\times \IN)$ is defined so that $G'(b,a)$ holds if and only if 
\[b \equiv'_\tar b \,\&\, a \equiv'_\src a \,\&\, (\forall x,y\in 2^\IN)[G(x,y) \Rightarrow \Phi_a(x) = \Phi_b(y)]. \]
\end{itemize}
\end{itemize}
\qed
\end{definition}

Again, it is mostly straightforward to show that $\calE$ is a well-defined continuous (computable) contravariant functor. We only show that the 5th condition on $\CHQPol_\Mor$ holds, since it requires slightly more thought than the others. Assuming $b \equiv'_\tar b$, we must find $a\in\IN$ such that $G'(b,a)$. Note that
\begin{eqnarray*}
U &=& \{ x\in 2^\IN \mid \neg(x\equiv_\src x)\}\\
V_0 &=& \{ x\in 2^\IN \mid (\forall y\in 2^\IN)[G(x,y) \Rightarrow \Phi_b(y)=0\}\\
V_1&=&\{ x\in 2^\IN \mid (\forall y\in 2^\IN)[G(x,y) \Rightarrow \Phi_b(y)=1\}
\end{eqnarray*}
are open and $V_0 \cap V_1 \subseteq U$ and $2^\IN\subseteq U \cup V_0\cup V_1$. Since $2^\IN$ is compact  zero-dimensional we can find $a\in\IN$ such that for each $x\in 2^\IN$
\begin{eqnarray*}
\Phi_a(x) = 0 &\Rightarrow& x\in V_0 \cup U,\text{ and }\\
\Phi_a(x) = 1 &\Rightarrow& x\in V_1 \cup U.
\end{eqnarray*}
Note that $a\equiv'_\src a$, because if $x\equiv_\src x'$ then $x,x'\not\in U$ and since $G(x,y)$ and $G(x',y)$ both hold for some $y$, we have that $x$ and $x'$ are both in $V_0$ or  both in $V_1$, hence $\Phi_a(x)=\Phi_a(x')$. Finally, $G'(b,a)$ holds because if $G(x,y)$ then $x\not\in U$ hence $x\in V_0$ or $x\in V_1$, thus $\Phi_a(x)=\Phi_b(y)$.

Proving that the above definition of $\calE\colon \CHQPol \to \ODQPol$ behaves as intended is similar to the case for $\calD\colon \ODQPol\to \CHQPol$, but easier because the resulting spaces are discrete.

By composing we get computable endofunctors $\calE\circ \calD$ on $\ODQPol$ and $\calD\circ \calE$ on $\CHQPol$. We next define computable natural transformations $\eta\colon 1\to \calE\circ \calD$ and $\epsilon\colon 1\to \calD\circ \calE$. In both cases, the intended interpretation of the natural transformations at $X$ is the function $X\to 2^{2^X}$ defined as $x\mapsto \lambda h.h(x)$.  

\begin{definition}\label{def:eta_epsilon}
The natural transformations $\eta\colon 1\to \calE\circ \calD$ and $\epsilon\colon 1\to \calD\circ \calE$ are defined as follows.
\begin{enumerate}
\item
$\eta\colon \ODQPol_\Obj \to \ODQPol_\Mor$ is defined as $\eta(\equiv) = \langle G_\eta, \equiv, \equiv'' \rangle$, where
\begin{itemize}
\item
$\equiv' = \calD_\Obj(\equiv)$,
\item
$\equiv'' = \calE_\Obj(\equiv')$,
\item
$G_\eta(a,b)$ if and only if 
\[a\equiv a \,\&\, b\equiv''b \,\&\, (\forall x\in 2^\IN)[x\equiv ' x \Rightarrow \Phi_b(x) = x(a)].\]
\end{itemize}
\item
$\epsilon\colon \CHQPol_\Obj \to \CHQPol_\Mor$ is defined as $\epsilon(\equiv) = \langle G_\epsilon, \equiv, \equiv'' \rangle$, where
\begin{itemize}
\item
$\equiv'=\calE_\Obj(\equiv)$,
\item
$\equiv'' =\calD_\Obj(\equiv')$,
\item
$G_\epsilon(x,y)$ if and only if 
\[x\equiv x \,\&\, y\equiv''y \,\&\, (\forall a\in \IN)[a\equiv' a \Rightarrow y(a)=\Phi_a(x)].\]
\end{itemize}
\end{enumerate}
\qed
\end{definition}

\begin{lemma}\label{lem:compcontfunctors}
$\calD\colon \ODQPol\to \CHQPol$ and $\calE\colon \CHQPol \to \ODQPol$ are computable contravariant functors, and $\eta\colon 1\to \calE\circ \calD$ and $\epsilon\colon 1\to \calD\circ \calE$ are computable natural transformations.
\end{lemma}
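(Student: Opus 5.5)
Recall what the lemma requires. $\calD_\Obj,\calD_\Mor,\calE_\Obj,\calE_\Mor,\eta,\epsilon$ must be computable maps into the correct spaces. They must satisfy the (contravariant) functor equations. The naturality squares must commute.

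The plan has three stages.

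\textbf{Computability.} Each defining formula has a fixed shape: a universal quantifier over a compact or overt-dual domain applied to an open predicate in the input.

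For $\calD_\Obj(\equiv)$ we must produce a point of $\PU(2^\IN\times 2^\IN)$, i.e.\ semi-decide $\neg(x\equiv' y)$. This holds iff $(\exists a,b)[a\equiv b \,\&\, x(a)\ne y(b)]$. Since $\equiv\in\PL(\IN\times\IN)$, membership $a\equiv b$ is semi-decidable, so this is an open predicate computable from $\equiv$. Hence the complement, which is a closed set and therefore compact in $2^\IN\times 2^\IN$, is computable in $\PU$.

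The same reasoning gives $G'$ in $\calD_\Mor$ and $G_\epsilon$. Each conjunct $y\equiv'' y$, $x\equiv x$ is closed in the argument, i.e.\ its negation is open uniformly in the input, and the remaining universal clause has an existential-over-$\IN$ open negation.

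For $\calE_\Obj$, $\calE_\Mor$ and $G_\eta$ we must semi-decide positive membership in $\PL(\IN\times\IN)$. There, $(\forall x,y)[x\equiv y\Rightarrow \Phi_a(x)=\Phi_b(y)]$ is the statement that the compact set $\equiv$ lies in the open set $\{\langle x,y\rangle \mid \Phi_a(x)=\Phi_b(y)\}$. This is semi-decidable from $\equiv\in\PU$. For $G_\eta$, the clause $(\forall x)[x\equiv'x\Rightarrow \Phi_b(x)=x(a)]$ is likewise $\Box$ of a clopen set applied to the compact diagonal part of $\equiv'$, which is computable from $\equiv'$ by the composition-style argument in the definition of $\CHQPol$.

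\textbf{Well-definedness and functoriality.} Next check that the outputs land in $\ODQPol_\Mor$ and $\CHQPol_\Mor$. For $\calE_\Mor$, condition 5 was done in the text; the others are routine. The corresponding condition 5 for $\calD_\Mor$ holds because, given $y\equiv'_\tar y$, the function $x=y\circ f$ (explicitly $x(a)=y(b)$ for any $b$ with $G(a,b)$, arbitrary off the domain) is well defined by axioms 2--4.

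For $\eta$ and $\epsilon$, condition 5 is the only nontrivial point. For $\epsilon$, given $x\equiv x$ define $y(a)=\Phi_a(x)$; this is $\equiv''$-invariant by definition of $\equiv'$. For $\eta$, given $a\equiv a$ we need some $b$ with $\Phi_b(x)=x(a)$ on all $x$ with $x\equiv'x$; take $\Phi_b$ the clopen set ``the $a$-th bit is $1$''.

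Functoriality ($\calD(g\circ f)=\calD f\circ\calD g$ and $\calD(\id)=\id$) I would verify semantically via the full and faithful functors $\calF$, $\calG$ of Theorems~\ref{thrm:ODQPol} and \ref{thrm:CHQPol}. Two points with the same source and target inducing the same function are equal, because $G$ is determined by $f$ through $G(a,b)\iff f(q(a))=q(b)$. So it suffices that $\calD$ realizes $h\mapsto h\circ f$, which the text already verified. The same argument handles $\calE$, whose interpretation is the analogous one.

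\textbf{Naturality.} Again by faithfulness, it suffices to show that $\eta(\equiv)$ and $\epsilon(\equiv)$ realize the evaluation map $x\mapsto\lambda h.h(x)$. Evaluation is natural in any category of functions: $2^{2^f}(\mathrm{ev}_x)=\mathrm{ev}_x\circ 2^f=\mathrm{ev}_{f(x)}$.

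\textbf{Main obstacle.} I expect the hardest step to be the identification of the $\calE$ side: showing that the $\equiv'$-classes of $\calE_\Obj(\equiv)$ correspond bijectively to the continuous maps $\calG(\equiv)\to 2$. This needs every continuous $h\colon\calG(\equiv)\to 2$ to lift to some $\Phi_a$. I would handle it exactly like the $V_0,V_1,U$ separation argument given for condition 5 of $\calE_\Mor$. The sets $\{x \mid \neg(x\equiv x) \vee h(q_\equiv(x))=i\}$ for $i=0,1$ are open and cover $2^\IN$, so compactness and zero-dimensionality of $2^\IN$ yield a clopen $\Phi_a$ refining them.
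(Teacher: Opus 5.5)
Your proposal is correct, but it takes a different route from the paper. The paper writes out only the naturality of $\eta$; it treats computability, functoriality and the $\epsilon$ case as routine. That proof is purely syntactic. It shows the relations $G^\tar_\eta\circ G$ and $G''\circ G^\src_\eta$ coincide by double inclusion, chasing elements through the defining formulas. Condition~5 supplies the intermediate witnesses: a $d$ with $G^\src_\eta(a,d)$ in one direction, and a $y$ with $G'(x,y)$ in the other.

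You instead transport everything to actual functions via $\calF$ and $\calG$. A morphism is determined by its source, target and induced function, so functoriality and naturality reduce to two set-level facts: precomposition is contravariantly functorial, and evaluation is natural. This treats $\calD$, $\calE$, $\eta$, $\epsilon$ uniformly, and would also give the triangle identities of Theorem~\ref{thrm:adjunction} for free. The price is dependence on the semantic identifications, whereas the paper's chase stays entirely at the level of PERs and graphs.

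On your ``main obstacle'': the faithfulness argument does not need surjectivity. It only needs the map $[a]_{\equiv'}\mapsto(K\mapsto\Phi_a(x)\text{ for }x\in K)$ from $\calF(\calE_\Obj(\equiv))$ to functions $\calG(\equiv)\to 2$ to have three properties: it is well defined, it is injective, and it intertwines $\calF(\calE_\Mor(g))$ with precomposition by $\calG(g)$. All three are immediate from the definitions of $\equiv'$ and $G'$. Lifting every continuous $h$ to some $\Phi_a$ only justifies that $\calE_\Obj(\equiv)$ is all of $2^{\calG(\equiv)}$, which this lemma does not require. Your separation argument for it is nonetheless correct.
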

\begin{proof}
We prove that $\eta$ is a natural transformation (the proof for $\epsilon$ is almost identical). Let $\eta(\equiv_\src) =  \langle G^\src_\eta, \equiv_\src, \equiv_\src'' \rangle$ and $\eta(\equiv_\tar) = \langle G^\tar_\eta, \equiv_\tar, \equiv_\tar'' \rangle$. Assume $\langle G, \equiv_\src, \equiv_\tar \rangle$ is in $\ODQPol_\Mor$, and let $ \langle G'', \equiv''_\src, \equiv''_\tar \rangle$ be its image under $\calE\circ\calD$. We must show that $G^\tar_\eta \circ G$ equals $G''\circ G^\src_\eta$. 

We first prove $(G^\tar_\eta \circ G)(a,b)$ implies $(G''\circ G^\src_\eta)(a,b)$ for each $a,b\in\IN$. Assume $c\in \IN$ satisfies $G(a,c)$ and $G^\tar_\eta(c,b)$, and fix $d\in\IN$ satisfying $G^\src_\eta(a,d)$. We must show that $G''(d,b)$ holds, so fix $x,y\in 2^\IN$ and assume $G'(x,y)$. Then $G'(x,y)$ and $G(a,c)$ yields $y(a) = x(c)$, $G^\tar_\eta(c,b)$ and $x\equiv'_\tar x$ yields $\Phi_b(x) = x(c)$, and $G^\src_\eta(a,d)$ and $y\equiv'_\src y$ yields $\Phi_d(y) = y(a)$. Therefore, $\Phi_b(x) = \Phi_d(y)$, hence $G''(d,b)$ holds.

Next we prove $(G''\circ G^\src_\eta)(a,b)$ implies $(G^\tar_\eta \circ G)(a,b)$. Assume $d\in \IN$ satisfies $G^\src_\eta(a,d)$ and $G''(d,b)$, and fix $c\in\IN$ satisfying $G(a,c)$. We must show that $G^\tar_\eta(c,b)$ holds, so fix $x\in 2^\IN$ and assume $x\equiv'_\tar x$. There is $y\in 2^\IN$ satisfying $G'(x,y)$. Then $G'(x,y)$ and $G(a,c)$ yields $y(a) = x(c)$, $G^\src_\eta(a,d)$  and $y\equiv'_\src y$ yields $\Phi_d(y) = y(a)$, and $G''(d,b)$ yields $\Phi_b(x) = \Phi_d(y)$. 
\qed
\end{proof}

\begin{theorem}\label{thrm:adjunction}
$\calD(\eta_X)\circ \epsilon_{\calD(X)} = 1_{\calD(X)}$ and $\calE(\epsilon_Y)\circ \eta_{\calE(Y)} = 1_{\calE(Y)}$ hold for each $X\in\ODQPol_\Obj$ and $Y\in\CHQPol_\Obj$. Therefore, $\calD$ and $\calE$ are adjoint on the right, meaning there is a natural bijection between morphisms of the form $f\colon X\to \calE(Y)$ in $\ODQPol$ and $g\colon Y\to\calD(X)$ in $\CHQPol$ given by $f = \calE_\Mor(g)\circ\eta(X)$ and $g=\calD_\Mor(f)\circ\epsilon(Y)$.
\end{theorem}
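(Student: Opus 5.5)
We verify the two triangle identities by unwinding the graphs of the composite morphisms and comparing them to the identity graphs, which are just the PERs themselves, since $\id(\equiv)=\langle\equiv,\equiv,\equiv\rangle$. The adjunction then follows by the standard categorical argument, using naturality from Lemma~\ref{lem:compcontfunctors}.

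For the first identity, fix $X={\equiv}\in\ODQPol_\Obj$ and write $\equiv'=\calD_\Obj(\equiv)$, $\equiv''=\calE_\Obj(\equiv')$ and $\equiv'''=\calD_\Obj(\equiv'')$. The composite $\calD(\eta_X)\circ\epsilon_{\calD(X)}$ is a morphism $\equiv'\to\equiv'$, and its graph $H$ satisfies $H(x,z)$ if and only if there is $y$ with $G_{\epsilon}(x,y)$ and $\calD(G_\eta)(y,z)$. Here $G_\epsilon(x,y)$ says $y(a)=\Phi_a(x)$ for all $a\equiv''a$. By Definition~\ref{def:functor_D}, $\calD(G_\eta)(y,z)$ says $z(a)=y(b)$ whenever $G_\eta(a,b)$, and $G_\eta(a,b)$ says $\Phi_b$ agrees with evaluation at $a$ on $\equiv'$-valid points.

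Chaining these gives $z(a)=y(b)=\Phi_b(x)=x(a)$ whenever $a\equiv a$, and this requires at least one $b$ with $G_\eta(a,b)$. The existence of such a $b$ is condition 5 for $\eta(\equiv)$, which I take from Lemma~\ref{lem:compcontfunctors}. For $x\equiv'x$ and $z\equiv'z$ this shows $x\equiv'z$, since on indices $a$ that are not $\equiv$-valid the relation $\equiv'$ imposes no constraint. Conversely, if $x\equiv'z$, then the morphism axioms 1 to 5 for the composite already force $H(x,z)$: there is some $z_0$ with $H(x,z_0)$, we have just shown $x\equiv' z_0$, hence $z_0\equiv' z$, and axiom 3 gives $H(x,z)$. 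So $H={\equiv'}$.

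The second identity $\calE(\epsilon_Y)\circ\eta_{\calE(Y)}=1$ is handled dually. For $a\equiv'a$ in $\calE_\Obj(Y)$, pick $b$ with $G_\eta(a,b)$, meaning $\Phi_b(w)=w(a)$ on valid $w$. Then pick $c$ with $\calE(G_\epsilon)(b,c)$, meaning $\Phi_c(x)=\Phi_b(y)$ whenever $G_\epsilon(x,y)$. Since $G_\epsilon(x,y)$ gives $y(a)=\Phi_a(x)$, we get $\Phi_c(x)=\Phi_a(x)$ for all $x\equiv x$, so $c\equiv'a$. The same axiom-chasing trick as before then gives equality of the graphs.

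The main obstacle is the step ``$z(a)=x(a)$ for all valid $a$ implies $x\equiv'z$''. One has to check that $\equiv'$ only constrains $\equiv$-related indices, and that the composite's graph is saturated. For the second identity the analogous step needs agreement of $\Phi_c$ and $\Phi_a$ only on valid points, which is exactly the definition of $\equiv'$.

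Finally, for the adjunction, given $f\colon X\to\calE(Y)$ set $g=\calD(f)\circ\epsilon_Y$. Then
\[\calE(g)\circ\eta_X=\calE(\epsilon_Y)\circ\calE\calD(f)\circ\eta_X=\calE(\epsilon_Y)\circ\eta_{\calE(Y)}\circ f=f,\]
using naturality of $\eta$ and then the second identity. The symmetric computation, using naturality of $\epsilon$ and the first identity, shows that the other composite is also the identity.
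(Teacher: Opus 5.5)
Your proposal is correct and follows essentially the paper's route: you unwind the graphs of the two composites, then obtain the adjunction bijection by the same naturality-plus-triangle-identity computation the paper uses. The only difference is which inclusion you check by hand. You show the composite's graph is contained in the PER and recover the reverse inclusion from axioms~5 and~3; in the second identity this tacitly uses totality of $\epsilon_Y$ to get, for each valid $x$, some $y$ with $G_\epsilon(x,y)$. The paper instead shows directly that $(a,a)$ lies in the composite for every valid $a$, with equality then following from axioms~3 and~4, which avoids that extra totality step.
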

\begin{proof}
Fix $Y\in\CHQPol_\Obj$ and PERs $\equiv$, $\equiv'$, $\equiv''$, $\equiv'''$ corresponding to $Y$, $\calE(Y)$, $\calD(\calE(Y))$, $\calE(\calD(\calE(Y)))$, respectively. We prove  $\calE_\Mor(\epsilon(Y))\circ \eta(\calE_\Obj(Y)) = \id(\calE_\Obj(Y))$. $\eta(\calE_\Obj(Y))$ is the morphism  $\langle G_\eta, \equiv', \equiv''' \rangle$, where $G_\eta(a,b)$ if and only if 
\[a\equiv' a \,\&\, b\equiv'''b \,\&\, (\forall y\in 2^\IN)[y\equiv'' y \Rightarrow \Phi_b(y) = y(a)].\]
$\calE_\Mor(\epsilon(Y))$ is the morphism $\langle G'_\epsilon, \equiv''', \equiv'\rangle$ where $G'_\epsilon(b,a)$ if and only if 
\[b \equiv''' b \,\&\, a \equiv' a \,\&\, (\forall x,y\in 2^\IN)[G_\epsilon(x,y) \Rightarrow \Phi_a(x) = \Phi_b(y)], \]
and where $G_\epsilon(x,y)$ if and only if 
\[x\equiv x \,\&\, y\equiv''y \,\&\, (\forall a\in \IN)[a\equiv' a \Rightarrow y(a)=\Phi_a(x)].\]
We must show $a\equiv'a$ implies $(G'_\epsilon\circ G_\eta)(a,a)$. Fix $b\in\IN$ satisfying $G_\eta(a,b)$, and we show $G'_\epsilon(b,a)$. Assume $x,y\in 2^\IN$ satisfy $G_\epsilon(x,y)$. Then $y\equiv'' y$, and from $G_\eta(a,b)$ we obtain $\Phi_b(y) = y(a)$. Furthermore, $G_\epsilon(x,y)$ and $a\equiv' a$ implies $y(a)=\Phi_a(x)$. Therefore, $\Phi_a(x) = \Phi_b(y)$, hence $G'_\epsilon(b,a)$.

The proof that $\calD(\eta_X)\circ \epsilon_{\calD(X)} = 1_{\calD(X)}$ for $X\in\ODQPol_\Obj$ is almost identical and left to the reader.

The final claim is now standard category theory. Fix $f\colon X\to \calE(Y)$ in $\ODQPol$ and define $g=\calD_\Mor(f)\circ\epsilon(Y)$. Then
\begin{eqnarray*}
\calE_\Mor(g)\circ\eta(X) &=&  \calE_\Mor(\calD_\Mor(f)\circ\epsilon(Y))\circ\eta(X)\\
 &=&  \calE_\Mor(\epsilon(Y))\circ\calE_\Mor(\calD_\Mor(f))\circ\eta(X)\text{ (contravariant functor) }\\
 &=&  \calE_\Mor(\epsilon(Y))\circ\eta(\calE_\Obj(Y))\circ f\text{ (natural transformation) }\\
 &=& f.
\end{eqnarray*}
Similarly, given $g\colon Y\to\calD(X)$ in $\CHQPol$, defining $f = \calE_\Mor(g)\circ\eta(X)$ yields $g=\calD_\Mor(f)\circ\epsilon(Y)$.
\qed
\end{proof}

When interpreting $\calD$ and $\calE$ as the contravariant exponential functor $2^{(-)}$, the natural bijection above is the \emph{double exponential transpose} that maps $f\colon X\to 2^Y$ to $g\colon Y\to 2^X$ defined as $g(y) = \lambda x. f(x)(y)$  (see \cite{Taylor02} Proposition~2.11).

The next lemma shows that the dual functors $\calD\colon \ODQPol\to \CHQPol$ and $\calE\colon \CHQPol \to \ODQPol$ restrict to dual functors between $\ODQPol$ and $\ZCHQPol$.

\begin{lemma}\label{lem:DX_is_zerodim}
$\calD_\Obj(\equiv)$ is zero-dimensional for each $\equiv$ in $\ODQPol_\Obj$.
\end{lemma}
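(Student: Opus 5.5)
Write $\equiv'=\calD_\Obj(\equiv)$ and $\equiv''=\calE_\Obj(\equiv')$. The plan is to verify condition (i) of Lemma~\ref{lem:zdim} directly for $\equiv'$, with $\equiv''$ in the role of the PER corresponding to $\calE(Y)$ for $Y=\calG(\equiv')$. That is, for all $x,y\in 2^\IN$ we must show
\[x\equiv' y \iff (\forall c,d\in\IN)[c\equiv'' d \Rightarrow \Phi_c(x)=\Phi_d(y)].\]
The forward implication is immediate from the definition of $\calE_\Obj$: if $c\equiv'' d$ then $\Phi_c(x)=\Phi_d(y)$ for all $\equiv'$-related $x,y$. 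All the work is therefore in the converse, and I will argue its contrapositive: assuming $\neg(x\equiv' y)$, I produce $c,d$ with $c\equiv'' d$ and $\Phi_c(x)\neq\Phi_d(y)$.

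The idea is to use the evaluation maps. For each $a\in\IN$ let $e_a$ be the index of the clopen set $\{z\mid z(a)=1\}$, so that $\Phi_{e_a}(z)=z(a)$. If $a\equiv b$ then $e_a\equiv'' e_b$, because $z\equiv' w$ implies $z(a)=w(b)$ by the very definition of $\equiv'$. Now unfold $\neg(x\equiv' y)$: it says there are $a,b$ with $a\equiv b$ but $x(a)\neq y(b)$. Taking $c=e_a$ and $d=e_b$ then finishes the argument. This is essentially the same computation as the one witnessing $G_\eta$ in Definition~\ref{def:eta_epsilon}.

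At first glance there seems to be a separate case when $x$ is not $\equiv'$-reflexive, as in the proof of Lemma~\ref{lem:zdim}. Here it does not arise, since the definition of $\equiv'$ is a single universal formula. So $\neg(x\equiv' y)$ always unfolds to a witnessing pair $a\equiv b$ with $x(a)\neq y(b)$, even when $x=y$ or when $x$ is not $\equiv$-invariant.

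The main point to double-check is that condition (i) of Lemma~\ref{lem:zdim} really is the definition of zero-dimensionality being used, with $\equiv'$ there read as $\calE_\Obj$ of the object. This is what the preceding text indicates, so no topology is needed. As an alternative, to obtain (ii) one can map $\calG(\equiv')$ into $2^\IN$ by restricting $x$ to a fixed enumeration of the $\equiv$-reflexive elements. That map is injective on equivalence classes, and Corollary~\ref{cor:bij_iso_ch} then upgrades it to an embedding.
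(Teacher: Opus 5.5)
Your proof is correct and follows the paper's approach: both verify condition (i) of Lemma~\ref{lem:zdim} by turning a witness $a\equiv b$ with $x(a)\neq y(b)$ into $\equiv''$-related indices that evaluate at $a$ and $b$. The paper instead picks arbitrary $G_\eta$-witnesses, which are only guaranteed to agree with evaluation on $\equiv'$-reflexive points, so it must invoke $x\equiv' x$ and $y\equiv' y$ (which it calls trivial). Your canonical indices $e_a$, with $\Phi_{e_a}(z)=z(a)$ for every $z\in 2^\IN$, avoid that step entirely.
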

\begin{proof}
As before, we write $\equiv'$ for $\calD_\Obj(\equiv)$, and write $\equiv''$ for $ \calE_\Obj(\equiv')$. Assume 
\[(\forall a,b\in \IN)[a \equiv'' b \Rightarrow \Phi_a(x) = \Phi_b(y)],\]
and we show $x\equiv' y$. By defintion of $\equiv'$,
\[x\equiv' y \iff (\forall a,b\in\IN)[a\equiv b \Rightarrow x(a)  = y(b)].\]
So fix $a_0,b_0\in\IN$ with $a_0 \equiv b_0$. We must show $x(a_0)=y(b_0)$. Choose $a,b\in \IN$ with $G_\eta(a_0,a)$ and $G_\eta(b_0,b)$, where $\eta(\equiv)=\langle G_\eta,\equiv,\equiv''\rangle$ as in Definition~\ref{def:eta_epsilon}. Since $a_0\equiv b_0$, we have $a\equiv'' b$, hence $\Phi_a(x) = \Phi_b(y)$ by our assumption on $\equiv'$. Since $x\equiv' x$ and $y\equiv' y$ hold trivially, the definition of $G_\eta$ yields
\[x(a_0) = \Phi_a(x) = \Phi_b(y) = y(b_0),\]
hence $x\equiv' y$.
\qed
\end{proof}


\section{Monads and algebras}\label{sec:monad}


\begin{definition}
Let $\calC$ be an effective quasi-Polish category. A \emph{computable monad} on $\calC$ is a triple $(T,\eta,\mu)$  consisting of:
\begin{itemize}
\item
A computable functor $T\colon \calC \to\calC$,
\item
A computable natural transformation $\eta\colon 1\to T$,
\item
A computable natural transformation $\mu\colon T^2 \to T$,
\end{itemize}
satisfying the following for each $X \in\calC_\Obj$:
\begin{itemize}
\item
$\mu(X)\circ T_\Mor(\mu(X)) = \mu(X)\circ \mu(T_\Obj(X))$,
\item
$\mu(X)\circ T_\Mor(\eta(X)) = \mu(X) \circ \eta(T_\Obj(X)) = \id(T_\Obj(X))$.
\end{itemize}
\qed
\end{definition}

\begin{definition}\label{def:monadalgebra}
Let $\calC$ be an effective quasi-Polish category, and $(T,\eta,\mu)$ a computable monad on $\calC$. Define the \emph{Eilenberg-Moore category} $\Alg_T(\calC)$ as follows:
\begin{itemize}
\item
$\Alg_T(\calC)_\Obj$ is the subspace of $\calC_\Obj \times \calC_\Mor$ of pairs $(A,\alpha)$ satisfying:
\begin{itemize}
\item
$\src_\calC(\alpha) = T_\Obj(A)$ and $\tar_\calC(\alpha)= A$,
\item
$\alpha\circ\eta(A) = \id_\calC(A)$.
\item
$\alpha\circ T_\Mor(\alpha) = \alpha \circ \mu(A)$.
\end{itemize}
\item
$\Alg_T(\calC)_\Mor$ is the subspace of $\calC_\Mor \times \Alg(\calC)_\Obj \times \Alg(\calC)_\Obj$ of all triples $(h,(A,\alpha),(B,\beta))$ satisfying $h\circ \alpha = \beta\circ T_\Mor(h)$.
\item
Composition and identity morphisms are inherited from $\calC$.
\end{itemize}
\qed
\end{definition}

We take a moment to verify this construction can be carried out within $\ACA$. Assuming we have transitive relations encoding $\calC_\Obj$ and $\calC_\Mor$, we can construct a transitive relation encoding their product $\calC_\Obj \times \calC_\Mor$ (see \cite{dbr20} Section~3.1). Now define $\ell,r\colon \calC_\Obj \times \calC_\Mor \to \calC_\Obj\times \calC_\Obj\times \calC_\Obj\times \calC_\Obj$ as 
\begin{eqnarray*}
\ell(A,\alpha) &=& \langle \src_\calC(\alpha), \tar_\calC(\alpha), \alpha\circ T_\Mor(\alpha), \alpha\circ\eta(A)\rangle, \text{ and }\\
r(A,\alpha)&=&\langle T_\Obj(A), A, \alpha \circ \mu(A), \id_\calC(A)\rangle.
\end{eqnarray*}
Then within $\ACA$ we can define a transitive relation encoding $\Alg_T(\calC)_\Obj$ by computing the equalizer of $\ell$ and $r$ (see \cite{dbr20} Section~3.3). $\Alg_T(\calC)_\Mor$ is handled similarly.

\begin{lemma}
If $(T,\eta,\mu)$ is a computable monad on an effective quasi-Polish category $\calC$, then the Eilenberg-Moore category $\Alg_T(\calC)$ is an effective quasi-Polish category.
\qed
\end{lemma}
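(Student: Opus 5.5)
We must show that $\Alg_T(\calC)_\Obj$ and $\Alg_T(\calC)_\Mor$ are effective quasi-Polish spaces and that source, target, identity and composition are computable and satisfy the category axioms. The approach is to present both spaces as equalizers of pairs of computable maps into effective quasi-Polish spaces. Such equalizers are $\lpi 2$-subspaces, hence effective quasi-Polish; this is the same device used for transitivity in the proof that $\CHQPol$ is an effective quasi-Polish category.

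\textbf{Objects.} $\calC_\Obj \times \calC_\Mor$ is effective quasi-Polish, since $\calC_\Obj$ is a computable retract of $\calC_\Mor$ and products preserve effective quasi-Polishness. The subspace $D$ of pairs with $\src(\alpha)=T_\Obj(A)$ and $\tar(\alpha)=A$ is the equalizer of the computable maps $(A,\alpha)\mapsto\langle\src(\alpha),\tar(\alpha)\rangle$ and $(A,\alpha)\mapsto\langle T_\Obj(A),A\rangle$, so $D$ is $\lpi 2$.

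On $D$ the composites $\alpha\circ\eta(A)$, $\alpha\circ T_\Mor(\alpha)$ and $\alpha\circ\mu(A)$ are all defined. Indeed, $\tar(\eta(A))=T_\Obj(A)=\src(\alpha)$, and $\tar(T_\Mor(\alpha))=T_\Obj(\tar\alpha)=T_\Obj(A)$ by the functor laws. These composites are computable because $\circ$, $T$, $\eta$ and $\mu$ are computable. The remaining two conditions therefore cut out the equalizer of the computable maps $\ell,r\colon D\to\calC_\Mor\times\calC_\Mor$ given by
\[\ell(A,\alpha)=\langle\alpha\circ T_\Mor(\alpha),\ \alpha\circ\eta(A)\rangle,\qquad r(A,\alpha)=\langle\alpha\circ\mu(A),\ \id(A)\rangle.\]

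The one technical point is that $\circ$ is only a partial map. We use only its restriction to the $\lpi 2$ domain $D$, on which it is total and computable, so this causes no problem. The equalizer of two computable maps into an effective quasi-Polish space $Z$ is the preimage of the diagonal of $Z\times Z$. The diagonal is $\lpi 2$: for $Z=\I{\prec}$ it is $\bigcap_n\big(([n]\times Z)\Leftrightarrow(Z\times[n])\big)$ (see \cite{dbr20} Section~3.3). Hence $\Alg_T(\calC)_\Obj$ is $\lpi 2$ in an effective quasi-Polish space, and so is effective quasi-Polish.

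\textbf{Morphisms.} Consider $\calC_\Mor\times\Alg_T(\calC)_\Obj\times\Alg_T(\calC)_\Obj$ and first impose $\src(h)=A$ and $\tar(h)=B$, again an equalizer condition. This typing condition is implicit in the definition and is needed to make the composites meaningful. On that subspace, $h\circ\alpha$ and $\beta\circ T_\Mor(h)$ are defined and computable, and $\Alg_T(\calC)_\Mor$ is their equalizer, hence $\lpi 2$.

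\textbf{Structure maps and axioms.} Source and target are projections. The identity is $(A,\alpha)\mapsto(\id(A),(A,\alpha),(A,\alpha))$, which is a morphism because $\id(A)\circ\alpha=\alpha=\alpha\circ\id(T_\Obj A)=\alpha\circ T_\Mor(\id A)$. Composition is $(k,(B,\beta),(C,\gamma))\circ(h,(A,\alpha),(B,\beta))=(k\circ h,(A,\alpha),(C,\gamma))$. Its domain is the set where the middle algebras agree, and then $\tar(h)=B=\src(k)$. The image is an algebra map by the usual computation $k\circ h\circ\alpha=k\circ\beta\circ T_\Mor h=\gamma\circ T_\Mor k\circ T_\Mor h=\gamma\circ T_\Mor(k\circ h)$. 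All these maps are computable since they are built from those of $\calC$. The category axioms are inherited directly from $\calC$, because the first components compose exactly as in $\calC$.
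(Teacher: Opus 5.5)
Your proof is correct and takes the same approach as the paper. In the paragraph just before the lemma, the paper also cuts out $\Alg_T(\calC)_\Obj$ as the equalizer of two computable maps $\ell,r$ on $\calC_\Obj\times\calC_\Mor$, citing \cite{dbr20} Section~3.3, and says morphisms are handled similarly. Your two-stage version, which first restricts to the $\lpi 2$ set $D$ where the typing conditions hold, is a slightly more careful rendering: the paper's single equalizer applies the partial composition $\circ$ without checking it is defined, whereas on $D$ the composites are total.
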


The composition $\calE_\Mor\circ\epsilon \circ \calD_\Obj$ yields a computable natural transformation $\calE\epsilon\calD\colon\calD\calE\calD\calE \to \calD\calE$, and the composition $\calD_\Mor\circ\eta\circ \calE_\Obj$ yields a computable natural transformation $\calD\eta\calE\colon \calE\calD\calE\calD\to\calE\calD$. The next lemma is the well-known method of constructing monads by composing adjoint functors. The monad axioms can be verified directly using the definitions of (contravariant) functors and natural transformations, along with the identities proven in Theorem~\ref{thrm:adjunction}.

\begin{lemma}\label{lem:monads}
$(\calE\circ \calD, \eta,  \calE\epsilon\calD)$ is a computable monad on $\ODQPol$. $(\calD\circ \calE, \epsilon, \calD\eta\calE)$ is a computable monad on $\CHQPol$, and restricts to a computable monad on $\ZCHQPol$.
\qed
\end{lemma}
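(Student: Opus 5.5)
The plan is to derive everything from the standard construction of a monad from an adjunction, using Theorem~\ref{thrm:adjunction} and Lemma~\ref{lem:compcontfunctors}. Computability is free. $T=\calE\circ\calD$ is a composite of computable contravariant functors, so it is a computable covariant functor. The unit $\eta$ is computable by Lemma~\ref{lem:compcontfunctors}. The multiplication $\mu=\calE_\Mor\circ\epsilon\circ\calD_\Obj$ is a composite of computable maps. For $\mu$ to be a natural transformation $T^2\to T$, I apply $\calE_\Mor$ to the naturality square of $\epsilon$ at the morphism $\calD_\Mor(f)$ and use contravariance of $\calE$ and $\calD$. Contravariance reverses the arrow direction, so $\epsilon_{\calD X}\colon \calD X\to\calD\calE\calD X$ becomes $\calE\calD\calE\calD X\to\calE\calD X$, as required. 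The $\CHQPol$ case is symmetric, with $\calE,\calD$ and $\eta,\epsilon$ swapped.

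Next come the two unit laws. The law $\mu(X)\circ\eta(T_\Obj X)=\id(T_\Obj X)$ is exactly $\calE(\epsilon_{\calD X})\circ\eta_{\calE(\calD X)}=1$, the second triangle identity of Theorem~\ref{thrm:adjunction} with $Y=\calD X$. The law $\mu(X)\circ T_\Mor(\eta(X))=\id$ reads $\calE(\epsilon_{\calD X})\circ\calE(\calD(\eta_X))=\calE(\calD(\eta_X)\circ\epsilon_{\calD X})$. By the first triangle identity, $\calD(\eta_X)\circ\epsilon_{\calD X}=1_{\calD X}$. Functoriality of $\calE$ on identities then gives $\id$.

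Then the associativity law. Write $\mu(X)\circ T_\Mor(\mu(X))=\calE(\epsilon_{\calD X})\circ\calE\calD\calE(\epsilon_{\calD X})=\calE\bigl(\calD\calE(\epsilon_{\calD X})\circ\epsilon_{\calD X}\bigr)$. Write also $\mu(X)\circ\mu(TX)=\calE(\epsilon_{\calD X})\circ\calE(\epsilon_{\calD\calE\calD X})=\calE\bigl(\epsilon_{\calD\calE\calD X}\circ\epsilon_{\calD X}\bigr)$. These agree by naturality of $\epsilon\colon 1\to\calD\calE$ applied to the morphism $\epsilon_{\calD X}$, which says $\calD\calE(\epsilon_{\calD X})\circ\epsilon_{\calD X}=\epsilon_{\calD\calE\calD X}\circ\epsilon_{\calD X}$. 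The main obstacle is purely bookkeeping. With contravariant functors one must check carefully that every composite is defined, that is, that sources and targets match. I would verify each step against the typing $\eta_X\colon X\to\calE\calD X$, $\epsilon_Y\colon Y\to\calD\calE Y$. The $\CHQPol$ monad is handled dually by the same three computations.

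Finally, for the restriction to $\ZCHQPol$: for $Y\in\ZCHQPol_\Obj$, the object $\calD\calE Y=\calD_\Obj(\calE_\Obj Y)$ is zero-dimensional by Lemma~\ref{lem:DX_is_zerodim}. Hence $\calD\circ\calE$ maps $\ZCHQPol$ into itself. Since $\ZCHQPol$ is a full subcategory, the morphisms $\epsilon(Y)$ and $\calD\eta\calE(Y)$ lie in $\ZCHQPol_\Mor$. The monad laws are inherited from $\CHQPol$, and computability is inherited because $\ZCHQPol_\Mor$ is a subspace of $\CHQPol_\Mor$.
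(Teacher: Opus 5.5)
Your proposal is correct and matches the paper's intended argument: the paper gives no written proof and only remarks that the monad laws follow from the definitions of (contravariant) functors and natural transformations together with the triangle identities of Theorem~\ref{thrm:adjunction}. Your computations (naturality of $\mu$ via $\calE$ applied to $\epsilon$'s naturality square, the two unit laws via the two triangle identities, associativity via naturality of $\epsilon$ at $\epsilon_{\calD X}$, and the restriction to $\ZCHQPol$ via Lemma~\ref{lem:DX_is_zerodim} plus fullness) fill in exactly those details.
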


To simplify notation, in the following we will omit suffixes and write $\Alg(\ODQPol)$,  $\Alg(\CHQPol)$, and $\Alg(\ZCHQPol)$ for the Eilenberg-Moore categories.

\begin{lemma}\label{lem:algch=algzch}
$\Alg(\CHQPol)=\Alg(\ZCHQPol)$.
\end{lemma}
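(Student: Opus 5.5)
We need to show that every algebra $(A,\alpha)$ for the monad $\calD\circ\calE$ on $\CHQPol$ has a zero-dimensional carrier $A$. Morphisms then agree automatically, since $\ZCHQPol$ is a full subcategory and the monad on $\ZCHQPol$ is the restriction. Both Eilenberg--Moore categories are subspaces of the same ambient spaces with the same equations, so equality of the object spaces (as subspaces of $\CHQPol_\Obj\times\CHQPol_\Mor$) gives equality of the morphism spaces, with the same topology.

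The key observation is that $A$ is a retract of $\calD_\Obj(\calE_\Obj(A))$. The unit axiom $\alpha\circ\epsilon(A)=\id(A)$ exhibits $\epsilon(A)\colon A\to \calD\calE(A)$ as a split monomorphism in $\CHQPol$, with retraction $\alpha$. By Lemma~\ref{lem:DX_is_zerodim}, $\calD_\Obj(\calE_\Obj(A))$ is zero-dimensional, so by Lemma~\ref{lem:zdim}(ii) it is isomorphic to a closed subspace of $2^\IN$ via some embedding $f$.

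We then show that $A$ itself satisfies condition (ii) of Lemma~\ref{lem:zdim}. Under the functor $\calG$ of Theorem~\ref{thrm:CHQPol}, the map $\calG(\epsilon(A))$ is a continuous map with a continuous left inverse, hence injective. Since $\calG(A)$ is compact and the target is Hausdorff, it is a closed embedding. Composing with $f$ gives an embedding of $\calG(A)$ onto a closed subspace of $2^\IN$. Its image is closed because it is compact and $2^\IN$ is Hausdorff. Corollary~\ref{cor:bij_iso_ch} upgrades this continuous bijection onto its image to an isomorphism. Hence $A$ is zero-dimensional, so $\Alg(\CHQPol)_\Obj\subseteq\Alg(\ZCHQPol)_\Obj$. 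The reverse inclusion is immediate.

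The only point needing care is what ``computably isomorphic'' in Lemma~\ref{lem:zdim}(ii) means for a non-computable $A$. Inspecting the proof of (ii)$\Rightarrow$(i) in Lemma~\ref{lem:zdim}, it uses only the existence of a continuous embedding into $2^\IN$. So the non-uniform, merely continuous embedding constructed above suffices to obtain condition (i), which is the defining condition of $\ZCHQPol_\Obj$.
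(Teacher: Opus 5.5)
Your proposal is correct and follows the paper's own argument. The paper's one-line proof says exactly that each algebra $(A,\alpha)$ is a retract of $\calD_\Obj(\calE_\Obj(A))$ via the unit law $\alpha\circ\epsilon(A)=\id(A)$, and that this space is zero-dimensional by Lemma~\ref{lem:DX_is_zerodim}. You additionally spell out two steps the paper leaves implicit: that a retract of a zero-dimensional object embeds into $2^\IN$ and therefore satisfies condition (i) of Lemma~\ref{lem:zdim}, and that in the non-computable case the proof of (ii)$\Rightarrow$(i) needs only a continuous embedding.
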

\begin{proof}
The algebras of the monad $\calD\circ \calE$ are zero-dimensional because they are retracts of zero-dimensional spaces by Lemma~\ref{lem:DX_is_zerodim}.
\qed
\end{proof}

\section{Boolean algebras}\label{sec:boolalgebra}


\subsection{Category of Boolean algebra objects}

A \emph{Boolean algebra} is a tuple $(A,\vee,\wedge,\neg,0,1)$, where $A$ is a set, $\vee$ and $\wedge$ are binary operators on $A$, $\neg$ is a unary operator on $A$, and $0$ and $1$ are elements of $A$, subject to the following axioms (see \cite{DP02}):
\begin{itemize}
\item
(associative) $a \vee (b\vee c) = (a\vee b)\vee c$ and  $a \wedge (b\wedge c) = (a\wedge b)\wedge c$,
\item
(commutative) $a \vee b = b \vee a$ and $a \wedge b = b \wedge a$,
\item
(idempotency) $a \vee a = a$ and $a\wedge a = a$,
\item
(absorption) $a\vee (a\wedge b) = a$ and $a\wedge (a\vee b)=a$,
\item
(distributivity) $a \wedge (b\vee c) = (a\wedge b) \vee (a \wedge c)$ and $a\vee(b\wedge c) = (a\vee b)\wedge (a\vee c)$,
\item
(identity) $a\vee 0 = a$ and $a \wedge 1 = a$,
\item
(complement) $a\vee \neg a = 1$ and $a\wedge \neg a = 0$.
\end{itemize}
Given Boolean algebras $(A,\vee,\wedge,\neg,0,1)$ and $(B,\vee',\wedge',\neg',0',1')$, a morphism $h\colon A\to B$ is a \emph{Boolean algebra homomorphism} if $h$ preserves the Boolean algebra structure (i.e., $h(a\vee b) = h(a)\vee' h(b)$, $h(a\wedge b) = h(a)\wedge' h(b)$, $h(\neg a) = \neg' a$, $h(0) = 0'$, and $h(1) = 1'$).

The two element Boolean algebra $(2,\vee,\wedge,\neg,0,1)$, which has underlying set $2=\{0,1\}$, plays a central role in Stone duality.

\begin{definition}
Given a quasi-Polish category $\calC$ with finite products, we define $\BA(\calC)$ to be the category of Boolean algebra objects in $\calC$.
\begin{itemize}
\item
$\BA(\calC)_\Obj$ is the subspace of $\calC_\Obj \times (\calC_\Mor)^5$ of tuples $(A,\vee,\wedge,\neg,0,1)$, where the morphisms $\vee,\wedge \colon A\times A \to A$, $\neg \colon A \to A$, and $0,1\colon 1_\calC\to A$, satisfy the Boolean algebra axioms (interpreted as commutative diagrams in the usual way).
\item
$\BA(\calC)_\Mor$ is the subspace of $\calC_\Mor \times \BA(\calC)_\Obj \times \BA(\calC)_\Obj$ of all triples $(h,(A,\vee,\wedge,\neg,0,1),(B,\vee',\wedge',\neg',0',1'))$ such that $h\colon A\to B$ is a Boolean algebra homomorphism.
\item
Composition and identity morphisms are inherited from $\calC$.
\end{itemize}
\qed
\end{definition}

Note that the forgetful functor from $\BA(\calC)$ to $\calC$, which projects objects and morphisms onto their first coordinate, is computable. If $\calC$ is an effective quasi-Polish category and finite products are computable, then $\BA(\calC)$ is also an effective quasi-Polish category. In particular, we have the following.

\begin{lemma}
$\BA(\ODQPol)$, $\BA(\CHQPol)$, and $\BA(\ZCHQPol)$ are effective quasi-Polish categories.
\qed
\end{lemma}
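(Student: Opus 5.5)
The plan is to realize $\BA(\calC)_\Obj$ and $\BA(\calC)_\Mor$ as $\lpi 2$ subspaces of effective quasi-Polish spaces cut out by equalizer conditions. Here $\calC$ ranges over $\ODQPol$, $\CHQPol$ and $\ZCHQPol$. We then check that the structure maps are restrictions of computable maps. Throughout we rely on the computability of finite products from Section~\ref{sec:finlim}, in its uniform form.

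\textbf{Step 1: the product functor.} The first task is to make the product construction uniform. The PER $\equiv_\times$ and the projections $\pi_1,\pi_2$ depend computably on $\langle\equiv_1,\equiv_2\rangle$. The pairing morphism $u=\langle U,\equiv,\equiv_\times\rangle$ is computable from $f$ and $g$, since $U(a,\langle a_1,a_2\rangle)\iff F(a,a_1)\,\&\,G(a,a_2)$ is a finite positive combination in $\PL$, respectively in $\PU$. Hence $f\times g=\langle f\circ\pi_1,g\circ\pi_2\rangle$ and the terminal object $1_\calC$ are computable. Iterating gives computable maps for $A\times A\times A$ and for the associativity and diagonal morphisms.

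\textbf{Step 2: the object space.} Each Boolean algebra axiom is an equation between two morphisms. These morphisms are built from the data $(A,\vee,\wedge,\neg,0,1)$ using composition, pairing, projections, diagonals and terminal maps. For example, distributivity compares $\wedge\circ(\id\times\vee)$ with $\vee\circ\langle\wedge\circ(\pi_1,\pi_2),\wedge\circ(\pi_1,\pi_3)\rangle$. Because composition is only partial, we first take the subspace of $\calC_\Obj\times(\calC_\Mor)^5$ where the sources and targets are correct. This is an equalizer of computable maps into $\calC_\Obj$, and $\calC_\Obj$ is a quasi-Polish space, so the subspace is $\lpi 2$, exactly as in the proof that $\CHQPol_\Obj$ is effective. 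On that subspace, both sides of each axiom are computable maps into $\calC_\Mor$. Equality of two computable maps into an effective quasi-Polish space defines a $\lpi 2$ set, and a finite intersection of $\lpi 2$ sets is $\lpi 2$. So $\BA(\calC)_\Obj$ is a $\lpi 2$ subspace of an effective quasi-Polish space and is therefore effective quasi-Polish. This is the same equalizer argument the paper uses for $\Alg_T(\calC)$.

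\textbf{Step 3: the morphism space and the structure maps.} $\BA(\calC)_\Mor$ is carved out of $\calC_\Mor\times\BA(\calC)_\Obj^2$ in the same way. We impose the source and target conditions, and then the five homomorphism equations:
\begin{itemize}
\item $h\circ\vee=\vee'\circ(h\times h)$,
\item $h\circ\wedge=\wedge'\circ(h\times h)$,
\item $h\circ\neg=\neg'\circ h$,
\item $h\circ 0=0'$,
\item $h\circ 1=1'$.
\end{itemize}
It is therefore also effective quasi-Polish. Source and target are projections. The identity is $(A,\ldots)\mapsto(\id(A),(A,\ldots),(A,\ldots))$, which lands in the subspace because identities are homomorphisms. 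Composition is inherited from $\calC$ and restricted to pairs with matching algebra sources and targets. All of these maps are computable, and the category axioms transfer directly from $\calC$.

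\textbf{Main obstacle.} The main difficulty is Step 1: verifying that the product pairing $u$ and the map $f\times g$ are computable as maps on $\calC_\Mor$, particularly for $\CHQPol$. For $\CHQPol$, the relation $U$ must be seen to lie in $\PU(2^\IN\times2^\IN)$ computably. The plan is to argue that $\neg U(x,\langle x_1,x_2\rangle)\iff\neg F(x,x_1)\vee\neg G(x,x_2)$ is open uniformly in $F$ and $G$, and that the interleaving map is a computable homeomorphism. A secondary point is handling the partiality of composition. We deal with this by restricting to the $\lpi 2$ well-typed subspace before forming equations. Finally, the $\ZCHQPol$ case follows from the $\CHQPol$ case, since $\ZCHQPol_\Obj$ is a $\lpi 2$ subspace closed under products.
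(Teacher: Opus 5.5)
Your proposal is correct and takes essentially the same route as the paper. The paper justifies the lemma only by remarking that computable finite products make $\BA(\calC)$ effective quasi-Polish, relying implicitly on the $\lpi 2$ equalizer argument it uses for $\CHQPol_\Obj$ and $\Alg_T(\calC)$. You supply the details the paper leaves to the reader: the uniform computability of the pairing morphism in the $\PU$ case, and restricting to the well-typed $\lpi 2$ subspace before imposing the axioms.
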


See \cite{PS68} or  Corollary VI-4.11 \cite{J82} for the following.

\begin{lemma}\label{lem:bach=bazch}
$\BA(\CHQPol)=\BA(\ZCHQPol)$.
\qed
\end{lemma}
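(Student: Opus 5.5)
Since $\ZCHQPol$ is a full subcategory of $\CHQPol$ with the same finite products (the products and terminal object of the earlier lemmas are defined identically in both), and $\BA(\calC)$ depends only on objects, morphisms and products, the two categories coincide once we show: if $(A,\vee,\wedge,\neg,0,1)\in\BA(\CHQPol)_\Obj$ then $A$ is zero-dimensional. Morphisms then agree automatically, because $\ZCHQPol_\Mor$ consists of all $\CHQPol$-morphisms between zero-dimensional objects. So the whole proof is the classical fact that a compact Hausdorff topological Boolean algebra is a Stone space (Corollary VI-4.11 of \cite{J82}, or \cite{PS68}). We then transfer it via the equivalence $\calG$ of Theorem~\ref{thrm:CHQPol} and criterion (ii) of Lemma~\ref{lem:zdim}.

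Let $X=\calG(A)$, a compact Hausdorff (hence compact metrizable) space carrying continuous Boolean operations. Write $a\le b$ iff $a\wedge b=a$; this is a closed partial order, being an equalizer of continuous maps into a Hausdorff space. The plan is to show that for $x\neq y$ there is a clopen set containing $x$ but not $y$. If $x\neq y$, then $x\not\le y$ or $y\not\le x$. Say $x\not\le y$ and set $c=x\wedge\neg y\neq 0$. It then suffices to find, for each nonzero $c$, a continuous Boolean homomorphism $h\colon X\to 2$ with $h(c)=1$. Indeed, the map $p\mapsto p\wedge\neg y$ is continuous, so $h(p\wedge\neg y)$ is a continuous $2$-valued function. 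It equals $1$ at $x$ and $0$ at $y$, since $h(y\wedge\neg y)=h(0)=0$.

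The key step, which I expect to be the main obstacle, is producing such $h$ from the topology. The approach is the Numakura/Johnstone argument that the topology has a basis of neighbourhoods of $0$ that are open ideals (equivalently, $X$ is totally disconnected).

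First, continuity of $\wedge$ at $(0,p)$ together with compactness gives, for any neighbourhood $W$ of $0$, a neighbourhood $V$ with $V\wedge X\subseteq W$. The set $\downarrow V=\{p\wedge q\mid p\in V,\ q\in X\}$ is then a down-closed subset of $W$. Next, continuity of $\vee$ at $(0,0)$ allows iterating to get sets closed under finite joins. The union of finite joins from $\downarrow V$ is an ideal $I$; here compactness of $X$ and closedness of $\le$ are used to keep $I$ inside $W$ and to show that $I$ is open. In a Boolean algebra, an ideal that is an open neighbourhood of $0$ has open cosets $a\mapsto a\triangle I$, and these cosets partition $X$. So $I$ is clopen, and its cosets form a finite clopen partition by compactness. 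Choosing $W$ avoiding $c$, the quotient $X/I$ is a finite Boolean algebra in which $c\neq 0$. Composing the quotient map with an atom-projection $X/I\to 2$ sending $[c]$ to $1$ yields the continuous homomorphism $h$. If the ideal construction becomes messy, the fallback is to show total disconnectedness directly, since a compact Hausdorff totally disconnected space has a clopen basis. Connected components are sub-$\vee$-semilattices containing... (a connected component $C$ of $0$ is mapped into itself by $p\mapsto p\wedge q$, so it is an ideal). Then $p\mapsto p\triangle q$ is a homeomorphism, which forces the component of $0$ to be a closed subalgebra; it is $\{0\}$ because a nontrivial connected set would contain a nonzero element separable by the finite-quotient argument.

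Finally, with the clopen sets separating points, the evaluation map $X\to 2^\IN$ along a countable family of clopens (a countable clopen basis exists since $X$ is quasi-Polish, hence second countable) is a continuous injection of a compact space into a Hausdorff space, so it is an embedding. Lemma~\ref{lem:zdim}(ii)$\Rightarrow$(i) then gives zero-dimensionality of $A$. This argument is classical, so the equality is of categories on the nose; no computability claim is required.
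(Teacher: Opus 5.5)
Your reduction is sound and follows the paper's route. Since $\ZCHQPol$ is a full subcategory of $\CHQPol$ closed under the identically defined finite products, everything comes down to the classical fact that a compact Hausdorff topological Boolean algebra is zero-dimensional. The paper proves the lemma solely by citing that fact (\cite{PS68}, Corollary VI-4.11 of \cite{J82}). Your transfer back is also fine: a compact second countable space has only countably many clopens, and then Lemma~\ref{lem:zdim}(ii)$\Rightarrow$(i) applies. So, resting on the citation, the proposal is correct.

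Your self-contained sketch of the classical fact, however, has a genuine gap at the step you flagged. An ideal containing an open neighbourhood of $0$ is an open subgroup of $(X,\triangle)$, hence clopen. So "some neighbourhood $V$ generates an ideal avoiding $c$" is equivalent to "some clopen ideal omits $c$", which already gives the homomorphism $h$. That step is the whole theorem, not a lemma on the way to it. Iterating continuity of $\vee$ only gives $V_{n+1}\vee V_{n+1}\subseteq V_n$, i.e.\ control of boundedly many joins. Compactness plus a closed order gives completeness, but nothing that bounds the number of joins.

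The measure algebra of $[0,1]$, with metric $\mu(a\triangle b)$, shows the problem: it has continuous operations, a closed order and is complete, yet $1$ is a finite join of elements of every neighbourhood of $0$. So compactness must be used essentially at exactly the point where the sketch is vague.

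The fallback is circular. The component $C$ of $0$ is a closed ideal, not a subalgebra, and you rule out $C\neq\{0\}$ by appealing to the finite-quotient argument, which is the point at issue. What it does give is this: for $0\neq p\in C$, $\downarrow p=p\wedge C$ is a connected compact Boolean algebra with no atoms. An atom $e$ would give a continuous nonconstant homomorphism $x\mapsto[e\le x]$, whose fibres $\uparrow e$ and $\{x\mid x\wedge e=0\}$ are both closed.

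Excluding nontrivial compact atomless algebras needs a real extra ingredient. One option: $(X,\triangle)$ is a compact abelian group of exponent $2$, and compact connected abelian groups are divisible (Pontryagin duality), so the identity component is trivial. The other is the lattice-theoretic argument of \cite{PS68}.
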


\subsection{Enough Boolean algebra homomorphisms}

The purpose of this section is to prove the following theorem.

\begin{theorem}\label{thm:enuf_homs}
Let $A$ be a Boolean algebra in $\ODQPol$ or $\ZCHQPol$. If $a,b\in A$ and $a\not= b$, then there is a continuous Boolean algebra homomorphism $h\colon A\to 2$ such that $h(a)\not=h(b)$.
\qed
\end{theorem}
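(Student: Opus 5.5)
We treat the two cases separately: the countable discrete case by the classical prime ideal argument, and the compact zero-dimensional case by directly constructing homomorphisms from points of a dual space. Continuity of the resulting $h$ is automatic in the $\ODQPol$ case and comes from clopenness in the $\ZCHQPol$ case.

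\emph{Case $A$ in $\ODQPol$.} Here $A$ is (classically) a countable set with discrete topology and the operations are ordinary Boolean operations, since morphisms of $\ODQPol$ are just functions between discrete spaces by Theorem~\ref{thrm:ODQPol}. Every function out of a discrete space is continuous, so it suffices to find an ordinary homomorphism $h\colon A\to 2$ with $h(a)\neq h(b)$. Put $c=(a\wedge\neg b)\vee(b\wedge\neg a)$; then $c\neq 0$ because $a\neq b$.

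Because $A$ is countable, we can build an ultrafilter containing $c$ without choice. Enumerate $A=\{d_0,d_1,\dots\}$ and set $u_0=c$. At stage $n$ let $u_{n+1}=u_n\wedge d_n$ if this is nonzero, and $u_{n+1}=u_n\wedge\neg d_n$ otherwise; the second option is nonzero whenever the first is zero. The filter generated by the $u_n$ is then an ultrafilter. Define $h(d)=1$ iff $d$ lies in this ultrafilter. This $h$ is a homomorphism with $h(c)=1$, and therefore $h(a)\neq h(b)$.

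\emph{Case $A$ in $\ZCHQPol$.} Here $A$ is a zero-dimensional compact Hausdorff space whose operations are continuous. By Lemma~\ref{lem:zdim} there are enough clopen sets to separate points, and a continuous $h\colon A\to 2$ is exactly a clopen subset $h^{-1}(1)$. I would follow the classical Pontryagin/Stone argument (cf.\ the reference to \cite{PS68} and Lemma~\ref{lem:bach=bazch}). Compact Hausdorff Boolean algebras are profinite: the congruences with clopen classes form a directed system whose quotients are finite Boolean algebras, and these quotient maps jointly separate points.

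The concrete plan runs as follows. Take a clopen $U$ separating $a$ from $b$. Using compactness of $A\times A$ and continuity of $\vee,\wedge,\neg$, refine the finite partition $\{U,A\setminus U\}$ to a finite clopen partition $\mathcal P$ that is a congruence, i.e.\ the class of $x\ast y$ depends only on the classes of $x$ and $y$. This refinement step is the main obstacle. The naive iteration ``refine until stable'' need not terminate, so instead I would take the coarsest congruence refining $\{U,A\setminus U\}$, namely $x\sim y$ iff $t(x)\in U\Leftrightarrow t(y)\in U$ for all unary polynomials $t$. I would then argue it has finitely many clopen classes as follows:
\begin{itemize}
\item In a Boolean algebra every unary polynomial has the form $t(x)=(x\wedge p)\vee(\neg x\wedge q)$ with parameters $p,q\in A$.
\item The family $\{t^{-1}(U)\}$ is the image of the compact parameter space $A\times A$.
\item Continuity together with compactness yields a finite clopen subfamily generating the same equivalence relation.
\end{itemize}

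With such a finite congruence, $A/\mathcal P$ is a finite Boolean algebra in which $a$ and $b$ have distinct images. Compose the quotient map with an atom-evaluation homomorphism $A/\mathcal P\to 2$ chosen to separate the two images. The resulting $h$ is a homomorphism, and it is continuous because each class of $\mathcal P$ is clopen.
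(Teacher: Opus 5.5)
Your $\ODQPol$ case is the paper's own argument (Lemma~\ref{lem:ods_enuf_homs}). Your $\ZCHQPol$ case is correct, but it takes a genuinely different route.

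The paper's $\ZCHQPol$ argument is order-theoretic. It first proves that $A$ has joins and meets of sequences and of clopen sets (Lemmas~\ref{lem:zch_alg_seq_complete} and~\ref{lem:zch_alg_complete}). It then shows that an open upper set containing $x$ contains some $y$ with $x\in\mathrm{int}(\uparw y)$. Finally, running through an enumeration of the clopen subsets of $A$, it builds nonzero elements $x_0=c\geq x_1\geq\cdots$ so that $W=\bigcup_i\uparw x_i$ is a clopen prime filter containing $c$. This is a topological analogue of the ultrafilter construction in the discrete case.

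You instead show that $A$ has a finite continuous quotient separating $a$ and $b$. The key fact you use is that unary Boolean polynomials are uniformly parametrized by the compact space $A\times A$.

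The one step you should write out is finiteness. Put $\phi(p,q,x)=(x\wedge p)\vee(\neg x\wedge q)$. Then $\phi^{-1}(U)$ is clopen in the compact zero-dimensional space $A^3$, so it is a finite union $\bigcup_{i\leq n}R_i\times X_i$ of clopen boxes. Hence each section $\{x\mid\phi(p,q,x)\in U\}$ is a union of some of the $X_i$. So the family $\{t^{-1}(U)\}$ is itself finite, and each $\sim$-class is a union of cells of the finite clopen partition generated by $X_1,\ldots,X_n$. Equivalently, $(p,q)\mapsto\phi(p,q,-)^{-1}(U)$ is a continuous map into the discrete space $2^A$, so its compact image is finite.

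What your route buys: it is shorter, it needs none of the completeness lemmas, and it proves more, namely that every such $A$ is the inverse limit of its finite continuous quotients.

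What the paper's route buys: its completeness lemmas are needed anyway in the $\ZCHQPol$ half of Lemma~\ref{lem:boolhoms_are_points}. There $\alpha(F)$ is defined as a join of meets, and $c=\bigwedge h^{-1}(\{1\})$ is used. The paper's construction also parallels the discrete ultrafilter argument and was written with formalization in $\ACA$ in mind.
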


The homomorphism $h$ is not necessarily computable from $a$ and $b$, but that will not effect the computability of the duality theorems. We will use Theorem~\ref{thm:enuf_homs} frequently in Section~\ref{sec:algebras_equiv} to prove two elements to be equal. It will also be used in the proof of Theorem~\ref{thm:spatial} when showing that a particular computable morphism is bijective, hence a computable isomorphism by Corollaries~\ref{cor:bij_iso} and \ref{cor:bij_iso_ch}. Since we are not concerned with the computability of $h$ but only its existence, we will avoid encodings and work with the corresponding spaces (Theorem~\ref{thrm:ODQPol} and Theorem~\ref{thrm:CHQPol}).

The core of the proof of Theorem~\ref{thm:enuf_homs} is separated into Lemma~\ref{lem:ods_enuf_homs} (for $\ODQPol$) and Lemma~\ref{lem:zchs_enuf_homs} (for $\ZCHQPol$), which we prove in the following subsections. Those lemmas say that if $c\in A$ does not equal $0$ then there is a continuous Boolean algebra homomorphism $h\colon A\to 2$ with $h(c)=1$. The theorem follows by taking $c=(a\wedge\neg b) \vee (\neg a\wedge b)$, which is not equal to $0$ when $a$ and $b$ are distinct, because any Boolean algebra homomorphism $h\colon A\to 2$ satisfying $h(c)=1$ must have $h(a)\not=h(b)$.

\subsubsection{Boolean algebra homomorphisms in $\ODQPol$}

Note that the homomorphism in the following lemma may not be computable.

\begin{lemma}\label{lem:ods_enuf_homs}
Let $A$ be a Boolean algebra in $\ODQPol$. If $a\in A$ and $a \not= 0$ then there is a Boolean algebra homomorphism $h\colon A \to 2$ such that $h(a)=1$.
\end{lemma}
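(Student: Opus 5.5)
We plan to reduce the statement to the classical Boolean prime ideal (ultrafilter) theorem for countable Boolean algebras, and then observe that continuity comes for free because the domain is discrete. By Theorem~\ref{thrm:ODQPol} we may work with the overt discrete quasi-Polish space $\calF(A)$ underlying the object $A$; the Boolean operations are continuous maps $\calF(A)\times\calF(A)\to\calF(A)$ and so on, and the commutative diagrams in the definition of $\BA(\ODQPol)$ say exactly that the underlying set, with these operations, is an ordinary Boolean algebra. The underlying set is countable, since it is a quotient of a subset of $\IN$ (the $\equiv$-equivalence classes of the PER).

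Next, we construct an ultrafilter containing $a$ directly, to keep the argument within countable choice and essentially arithmetical (in the spirit of the paper's remarks on $\ACA$). Enumerate the elements of $A$ as $c_0,c_1,\ldots$ by choosing representatives $n$ with $n\equiv n$. Set $u_0=a\neq 0$. Given $u_k\neq 0$, let $u_{k+1}=u_k\wedge c_k$ if this is nonzero, and otherwise $u_{k+1}=u_k\wedge\neg c_k$. In the second case $u_k\wedge\neg c_k\neq 0$, since $u_k=(u_k\wedge c_k)\vee(u_k\wedge\neg c_k)$. Let
\[\mathcal U=\{c\in A\mid (\exists k)\, u_k\le c\}.\]
This is a proper filter, because the $u_k$ form a decreasing chain of nonzero elements. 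It contains $a$, and for every $c$ it contains either $c$ or $\neg c$. Define $h(c)=1$ if $c\in\mathcal U$ and $h(c)=0$ otherwise. The standard verification shows that $h$ preserves $\vee,\wedge,\neg,0,1$; for $\vee$ one uses that an ultrafilter is prime, since $c\vee d\in\mathcal U$ and $\neg c,\neg d\in\mathcal U$ would give $\neg(c\vee d)\in\mathcal U$, a contradiction.

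Finally, we need $h$ to be a morphism of $\ODQPol$. Since $\calF(A)$ is discrete and quasi-Polish, every singleton is open: $\{x\}=\{y\mid y=x\}$ is the preimage of $\top$ under the continuous map $y\mapsto (x=y)$. Hence every subset is open and any function out of $\calF(A)$ is continuous. By fullness in Theorem~\ref{thrm:ODQPol}, the function $h$ is then represented by some $\langle G,\equiv_A,\equiv_2\rangle$ in $\ODQPol_\Mor$, where $2$ is the two-point discrete object. The homomorphism equations hold pointwise, so they hold as equalities of morphisms, which places $h$ in $\BA(\ODQPol)_\Mor$.

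The main obstacle is only the bookkeeping: checking that the Boolean algebra object axioms, stated as commutative diagrams of $\ODQPol$ morphisms, really coincide with the pointwise axioms on $\calF(A)$, and that the object $2$ carries the standard Boolean algebra structure, so that pointwise homomorphism means homomorphism in $\BA(\ODQPol)$. The non-computability of $h$ arises only from the non-effective choice "if this is nonzero", which matches the warning before the lemma.
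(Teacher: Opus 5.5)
Your proof is correct and is essentially the paper's argument. The paper builds the same chain, using finite sets $S_i$ whose meets $\bigwedge S_i$ play the role of your $u_i$, and defines $h$ as the indicator of the resulting ultrafilter. Your extra observation that discreteness makes every function out of $A$ continuous, so that $h$ is a genuine $\ODQPol$ morphism, is a point the paper leaves implicit.
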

\begin{proof}
The proof is standard. Let $(a_i)_{i\in\IN}$ be an enumeration of $A$. Set $S_0=\{a\}$. For $i\geq 0$, if $a_i \wedge \bigwedge S_i \not=0$ then set $S_{i+1} = S_i \cup\{a_i\}$, otherwise set $S_{i+1} = S_i \cup\{\neg a_i\}$. It is shown by induction that $\bigwedge S_i \not= 0$ for all $i\in\IN$. Let $S = \bigcup_{i\in\IN} S_i$. Since for each $b\in A$, either $b \in S$ or $\neg b \in S$, and since the meet of any finite subset of $S$ is not equal to $0$, it is easy to see that $S$ is an ultrafilter containing $a$. Defining $h(b)=1 \iff b\in S$ satisfies the lemma. 
\qed
\end{proof}

\subsubsection{Boolean algebra homomorphisms in $\ZCHQPol$}

The methods used in this subsection are derived from \cite{PS68} and Chapter~VI of \cite{g03}. Although we will not go into much detail, we  take care to make sure our proofs can be formalized in $\ACA$ (see \cite{Simpson09}).

Let $A$ be a Boolean algebra in $\ZCHQPol$. For any non-empty clopen set $U\subseteq A$ define $U^+ = \{ y\in A \mid (\exists x\in U)\, x\leq y\}$. Then $U^+$ is clopen, because $U^+ = \pi_1(\wedge^{-1}(U))$, where $\wedge \colon A\times A \to A$ is the meet operation and $\pi_1 \colon A\times A\to A$ is projection onto the first coordinate.

\begin{lemma}\label{lem:zch_alg_seq_complete}
If $A$ is a Boolean algebra in $\ZCHQPol$, then every sequence $(x_i)_{i\in\IN}$ in $A$ has both a join $\bigvee_{i\in\IN} x_i$ and a meet $\bigwedge_{i\in\IN} x_i$. If $U\subseteq A$ is an open upper (lower) set, then $\bigvee_{i\in\IN} x_i \in U$ ($\bigwedge_{i\in\IN} x_i \in U$) if and only if $\bigvee_{i\leq n} x_i \in U$ ($\bigwedge_{i\leq n} x_i \in U$) for some $n\in\IN$.
\end{lemma}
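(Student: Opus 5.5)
By the duality $\neg$ (an involutive homeomorphism exchanging joins and meets), it suffices to treat joins and open upper sets. My plan is to use compactness of $A$ together with the clopen sets $U^+$ defined above.

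First I would show that open upper sets are determined by the sets $U^+$. If $V$ is an open upper set and $y\in V$, then zero-dimensionality gives a clopen $U$ with $y\in U\subseteq V$, and then $y\in U^+\subseteq V$ because $V$ is an upper set. So every open upper set is a union of clopen upper sets. Moreover, every clopen upper set $W$ is compact, so it equals $U_1^+\cup\dots\cup U_k^+$ for finitely many clopen $U_j$.

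Next, set $s_n=\bigvee_{i\le n}x_i$. I would show that $(s_n)$ has a join by taking a limit point. Let $L$ be the set of cluster points of $(s_n)$. Since $A$ is compact, $L$ is a nonempty closed set. I would argue that $L$ is a single point $s$, and that $s$ is the join.

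To get the upper bound, fix $n$ and consider the closed set $\{y \mid s_n\le y\}=\{y\mid s_n\wedge y=s_n\}$. It is closed because $A$ is Hausdorff. It contains all $s_m$ with $m\ge n$, hence it contains every cluster point. So each $t\in L$ is an upper bound of all $x_i$.

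To get leastness, let $u$ be any upper bound and let $t\in L$. Then $\{y\mid y\le u\}$ is closed and contains all $s_m$, so $t\le u$. Applying this with $u$ equal to another cluster point $t'$, which is itself an upper bound, gives $t\le t'$ and symmetrically $t'\le t$. So $L=\{s\}$, and $s=\bigvee_i x_i$. Moreover, $s_n\to s$, since in a compact Hausdorff space a sequence with a unique cluster point converges to it.

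For the last claim, let $U$ be an open upper set. If some $s_n\in U$, then $s\in U$ because $s_n\le s$ and $U$ is an upper set. Conversely, if $s\in U$, then since $s_n\to s$ and $U$ is open, eventually $s_n\in U$. For meets, I would apply the join case to $(\neg x_i)$ and transfer the result through $\neg$, using that $\neg$ carries open lower sets to open upper sets.

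The main obstacle is doing this in a way formalizable in $\ACA$. Extracting a cluster point from a sequence in a compact space is sequential compactness, which is available in $\ACA$ via König's lemma on the finitely branching tree representation of $\calG(\equiv)$ from Theorem~\ref{thrm:CHQPol}. If that causes trouble, I would instead define $s$ directly as the unique point of the closed set $\bigcap_n\{y\mid s_n\le y\}\cap\{y\mid \forall$ clopen upper $W$ with $y\in W$, some $s_n\in W\}$. In that version I would use the separation of distinct points by clopen upper sets, which follows from the first step together with compactness.
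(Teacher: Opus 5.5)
Your proposal is correct and follows essentially the same route as the paper: you take a limit point of the partial joins $s_n$ by compactness, show it is an upper bound and lies below every upper bound using closedness of the relevant order sets in the Hausdorff space, and then read off the open-upper-set characterization from convergence. The differences are cosmetic: you work with the set of cluster points and show it is a singleton, so the whole sequence converges, whereas the paper passes to a convergent subsequence; and your preliminary step about unions of sets $U^+$ is not needed for the main argument.
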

\begin{proof}
Let $(x_i)_{i\in\IN}$ be a sequence in $A$, and define $y_i = x_0 \vee \cdots \vee x_i$. Since $A$ is compact, $(y_i)_{i\in\IN}$ contains a subsequence converging to some $y\in A$. Since the join of any subsequence equals the join of the whole sequence, we will assume without loss of generality that $(y_i)_{i\in\IN}$ converges to $y$. For any $k\in\IN$, we have that $(x_k \vee y_i)_{i\in\IN}$ converges to $x_k \vee y$ by continuity of the join operation. But $x_k \vee y_i = y_i$ for large enough $i$, hence $(x_k \vee y_i)_{i\in\IN}$ converges to $y$. Thus $x_k \vee y = y$ because $A$ is Hausdorff, hence $x_k \leq y$. Therefore, $y$ is an upper bound of $(x_i)_{i\in\IN}$. If $z$ is any other upper bound, then $y_i \in \dnarw z$, for all $i$, and since $\dnarw z$ is closed, the limit point $y$ is in $\dnarw z$. Therefore, $y$ is the join of $(x_i)_{i\in\IN}$. If $U$ is an open upper set, then since $y$ is the limit of (a subsequence of) the ascending sequence $(y_i)_{i\in\IN}$, it is clear that $y\in U$ if and only if $y_i\in U$ for some $i\in\IN$. Meets are handled similarly.
\qed
\end{proof}

\begin{lemma}\label{lem:zch_alg_complete}
If $A$ is a Boolean algebra in $\ZCHQPol$, then every clopen subset $C$ of $A$ has a join $\bigvee C$ and a meet $\bigwedge C$ in $A$. If $U$ is an open upper (lower) set, then $\bigvee C \in U$ ($\bigwedge C \in U$) if and only if there exists finite $F\subseteq C$ such that $\bigvee F \in U$ ($\bigwedge F \in U$).
\end{lemma}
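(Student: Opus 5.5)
The plan is to reduce to Lemma~\ref{lem:zch_alg_seq_complete} by producing a sequence in $C$ whose join equals $\bigvee C$. We treat joins; meets follow dually by applying the join case to the clopen set $\neg[C]=\neg^{-1}(C)$ and using that $\neg$ is a continuous order-reversing involution sending open lower sets to open upper sets. If $C=\emptyset$, set $\bigvee C = 0$; the characterization then holds with $F=\emptyset$. So assume $C\neq\emptyset$.

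\textbf{Step 1: a dense sequence in $C$.} Since $A$ is quasi-Polish, classically it is Polish (compact Hausdorff), so the subspace $C$ is separable. Choose a dense sequence $(c_i)_{i\in\IN}$ in $C$. (For $\ACA$ purposes we would instead pick one point from each nonempty basic open subset of $C$; this is arithmetic in the codes.) Let $y=\bigvee_i c_i$, which exists by Lemma~\ref{lem:zch_alg_seq_complete}.

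\textbf{Step 2: $y$ is an upper bound of $C$.} This is the key step. The set $\dnarw y=\{z\mid z\wedge y = z\}$ is closed, because it is the equalizer of two continuous maps into a Hausdorff space. Hence $C\cap\dnarw y$ is closed in $C$ and contains the dense set $\{c_i\}$, so it equals $C$. Any upper bound of $C$ is an upper bound of the $c_i$, hence lies above $y$. Therefore $y=\bigvee C$.

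\textbf{Step 3: the characterization.} If $\bigvee F\in U$ for some finite $F\subseteq C$, then $\bigvee F\le y$ and $U$ is an upper set, so $y\in U$. Conversely, if $y\in U$, then Lemma~\ref{lem:zch_alg_seq_complete} gives $n$ with $\bigvee_{i\le n}c_i\in U$, and we take $F=\{c_0,\dots,c_n\}$.

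The main obstacle is Step~2, where closedness of $\dnarw y$ replaces compactness-based arguments. Clopenness of $C$ is barely used here; it matters mainly for the choice of dense sequence, and presumably for later effectivity.
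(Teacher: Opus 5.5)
Your proposal is correct and follows essentially the same route as the paper. The paper likewise forms the join $y$ of a countable dense family in $C$, obtained concretely by picking one point from each clopen set meeting $C$ (exactly your $\ACA$-style variant). It then shows every point of $C$ lies in the closed set $\dnarw y$, phrased as a contradiction via a clopen neighborhood missing $\dnarw y$ rather than your density argument, and gets the finite-approximation claim from Lemma~\ref{lem:zch_alg_seq_complete}, with meets handled symmetrically.
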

\begin{proof}
Let $U\subseteq A$ be clopen. If $U$ is empty then its join is $\bot$, so there is nothing to prove. Assume $U$ is non-empty. Let $(C_i)_{i\in\IN}$ be an enumeration of all clopen subsets of $A$ that intersect $U$. Fix $x_i\in U\cap C_i$ for each $i\in \IN$.

Let $y$ be the join of $(x_i)_{i\in\IN}$. Clearly, any upper bound of $U$ is above $y$. If $x \in U$ was not less than $y$, then there would be clopen neighborhood $C_i$ of $x$ that misses the closed set $\dnarw y$, but then $x_i\not\leq y$, a contradiction. Therefore, $y$ is the join of $U$. The rest of the lemma is the same as Lemma~\ref{lem:zch_alg_seq_complete}. Meets are handled similarly.
\qed
\end{proof}

\begin{lemma}
Let $A$ be a Boolean algebra in $\ZCHQPol$, let $U\subseteq A$ be an open upper set, and $x\in U$. Then there is $y\in U$ with $x \in int(\uparw y)$.
\end{lemma}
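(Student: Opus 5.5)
Take $y$ to be the meet of a suitable clopen neighbourhood of $x$, and use Lemma~\ref{lem:zch_alg_complete} (meets of clopen sets) to show that this $y$ already lies in $U$.

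\textbf{Step 1: choose the neighbourhood.} Since $A$ is compact Hausdorff and zero-dimensional, the clopen neighbourhoods of $x$ form a neighbourhood base at $x$. So the intersection of all clopen $C$ with $x\in C\subseteq U$ is exactly $\{x\}$.

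\textbf{Step 2: reduce to finite meets.} Fix an enumeration $(C_i)_{i\in\IN}$ of the clopen subsets $C$ with $x\in C\subseteq U$. Set $D_n=C_0\cap\cdots\cap C_n$; each $D_n$ is a clopen neighbourhood of $x$ contained in $U$. By Lemma~\ref{lem:zch_alg_complete}, $y_n=\bigwedge D_n$ exists. It satisfies $y_n\le x$, and $y_n$ is increasing in $n$.

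\textbf{Step 3: show some $y_n$ lies in $U$.} This is the main obstacle. The idea is to show $\bigvee_n y_n = x$, and then apply Lemma~\ref{lem:zch_alg_seq_complete}: since $U$ is an open upper set containing $x$, some finite join $\bigvee_{i\le n}y_i = y_n$ is in $U$.

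To prove $\bigvee_n y_n = x$, let $z=\bigvee_n y_n$, so $z\le x$. Suppose $z\ne x$. The set $\uparw z$ is closed, since it is the preimage of the diagonal under $w\mapsto (w\wedge z, z)$ and $A$ is Hausdorff.

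I would instead try a compactness argument. Consider the closed set $\{w \mid w\wedge \neg z\neq 0\}^{c}$; more concretely, apply Lemma~\ref{lem:zch_alg_complete} to the upper/lower characterisation. The set $V=\{w \mid w\wedge\neg z \ne 0\}$ is open (Hausdorffness), and $x\in V$ because $x\not\le z$. Shrink each $D_n$ to $D_n\cap V'$, where $V'$ is a clopen set with $x\in V'\subseteq V$. The goal is to show that $y_n\wedge\neg z\ne 0$ for large $n$, which contradicts $y_n\le z$.

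For this, use the meet clause of Lemma~\ref{lem:zch_alg_complete} with the open lower set $W=\{w\mid w\wedge\neg z = 0\}^c$. This set is not lower, so the approach needs care. The fallback is a direct argument: the meets $y_n$ converge, because meet is continuous and the $D_n$ shrink to $\{x\}$. Hence every limit point $w$ of $(y_n)$ satisfies $w=\bigwedge\{x\}=x$.

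More precisely, the map $C\mapsto\bigwedge C$ should be upper-semicontinuous in the right sense. By the characterisation in Lemma~\ref{lem:zch_alg_complete}, $\bigwedge D_n\in O$ for an open lower set $O$ iff some finite meet is in $O$. Combining this with the fact that the closed sets $\uparw y_n$ contain $D_n$, and that $\bigcap_n D_n=\{x\}$, compactness gives $\bigcap_n\uparw y_n\cap\dnarw x$ … The key fact to establish is that the limit $z$ satisfies $D_n\subseteq\uparw z$ for all $n$, which forces $x\ge z$; and conversely that $x\le z$. For the converse, I would approximate $x$ by finite meets of points of $D_n$ and use the clopen neighbourhoods $\{w\mid w\wedge\neg z\neq 0\}$.

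\textbf{Conclusion.} Once $y=y_n\in U$, we have $D_n\subseteq\uparw y$, with $D_n$ open and containing $x$. Therefore $x\in int(\uparw y)$.
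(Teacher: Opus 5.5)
Your Steps 1 and 2 and your concluding paragraph are fine. The reduction at the start of Step 3 is also correct: if $\bigvee_n y_n=x$, then Lemma~\ref{lem:zch_alg_seq_complete} puts some $y_n$ in $U$, and the open set $D_n$ witnesses $x\in\mathrm{int}(\uparw y_n)$. But the equality $\bigvee_n y_n=x$ carries the entire content of the lemma, and you never prove it. Your attempts in Step 3 are abandoned midway: the set ``is not lower'', the compactness sentence trails off, and the converse is only sketched. The argument you fall back on, that the $y_n$ converge to $x$ ``because meet is continuous and the $D_n$ shrink to $\{x\}$'', is not valid. Continuity of the binary operation $\wedge$ says nothing about the infinitary meet $C\mapsto\bigwedge C$ along a shrinking neighbourhood base. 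The claim that $\bigwedge N$ approaches $x$ as the neighbourhood $N$ of $x$ shrinks is essentially the ``small semilattices'' property of the compact semilattice $(A,\wedge)$, which is the lemma itself. It does not follow from compactness, Hausdorffness and continuity of $\wedge$: there are compact Hausdorff topological semilattices without small semilattices. So zero-dimensionality must be used for more than producing a clopen neighbourhood base, and in your proposal it is not. Also, your ``key fact'' $D_n\subseteq\uparw z$ for all $n$ is false in general. It gives $z\le y_n\le z$, so together with $z=x$ it would force $D_0\subseteq\uparw x$. This already fails at the top element of Cantor space $2^\IN$ with pointwise operations, since that point is not isolated.

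The missing idea is to find a clopen neighbourhood of $x$ inside $U$ that is \emph{closed under binary meets}. Its meet is then a limit of finite meets of its elements (see the proofs of Lemmas~\ref{lem:zch_alg_seq_complete} and~\ref{lem:zch_alg_complete}), so the meet lies in that closed set. The paper constructs such a neighbourhood as follows. Take a clopen $V$ with $x\in V\subseteq U$, and let $Q=\{w\mid x\wedge w\in V\}$. This set is clopen and contains $x$, and $Q\subseteq U$ because $x\wedge w\le w$ and $U$ is an upper set. Then let $W=\{w\in Q\mid (\forall q\in Q)\,w\wedge q\in Q\}$. This $W$ has the following properties:
\begin{itemize}
\item it is open, by universal quantification over the compact set $Q$;
\item it is closed, since its complement is the union of $A\setminus Q$ and the clopen sets $\{w\mid w\wedge q\notin Q\}$ for $q\in Q$;
\item it contains $x$, because $x\wedge(x\wedge q)=x\wedge q\in V$ for $q\in Q$;
\item it is closed under $\wedge$.
\end{itemize}
Hence $y=\bigwedge W\in Q\subseteq U$ and $x\in W\subseteq\uparw y$. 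With such a $W$ available your construction would also go through: $W$ is one of your $C_i$, so $y_i=\bigwedge D_i\ge\bigwedge W$ lies in $U$. At that point, though, the sequence $(y_n)$ and the join argument are unnecessary.
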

\begin{proof}
Let $V\subseteq U$ be a clopen neighborhood of $x$. Let $Q$ be the preimage of $V$ under the map $y \mapsto x\wedge y$. Note that $Q\subseteq U$ because $U$ is an upper set, and that $Q$ is clopen and contains $x$. Set $W = \{ y\in Q \mid (\forall z\in Q)\, y\wedge z \in Q\}$. Then $W$ is clopen:
\begin{itemize}
\item
$W$ is open because it is defined by universal quantification over the compact set $Q$ by an open predicate.
\item
$W$ is closed because $y\not\in W$ implies there is $z\in Q$ such that $y$ is in the preimage of $\neg  Q$ under the map $y\mapsto y\wedge z$. The preimage of $\neg Q$ under that map is a clopen set that is disjoint from $W$.
\end{itemize}
Also note that $x \in W$. Let $y = \bigwedge W$. Then $y\in Q$, because $Q$ is closed and the meet of any finite subset of $W$ is in $Q$. Then $y\in U$ and $x \in W \subseteq \uparw y$.
\qed
\end{proof}

The above lemma implies there is clopen $C\subseteq U$ such that $x \in C \subseteq \uparw y\subseteq U$. Therefore, if $U$ is an open upper set containing $x$, then there exists a clopen nieghborhood $C$ of $x$ satisfying $\bigwedge C \in U$.

\begin{lemma}\label{lem:zchs_enuf_homs}
If $x\in A$ and $x \not= 0$ then there is a continuous Boolean homomorphism $h\colon A \to 2$ such that $h(x)=1$.
\end{lemma}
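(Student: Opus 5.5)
We want a continuous homomorphism $h\colon A\to 2$ with $h(x)=1$, where $A$ is a Boolean algebra object in $\ZCHQPol$ and $x\neq 0$. Our plan is to build a prime filter $P$ containing $x$ that is also a clopen subset of $A$, and then set $h(y)=1\iff y\in P$. Because $P$ is clopen, $h$ is continuous. Because $P$ is a prime (ultra)filter, $h$ is a homomorphism.

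To make $P$ clopen, we will build it as a principal filter ${\uparw} p$ generated by an atom-like element $p$ whose upper set is open. The construction runs as follows.
\begin{enumerate}
\item Enumerate a countable clopen basis $(C_i)_{i\in\IN}$ of $A$, which exists because $A$ is zero-dimensional compact quasi-Polish.
\item Set $p_0=x$. Recursively, for each $i$, let $q = p_i\wedge \bigwedge C_i$, using Lemma~\ref{lem:zch_alg_complete} to form $\bigwedge C_i$. If $C_i$ is a clopen upper set and $q\neq 0$, set $p_{i+1}=q$. Otherwise set $p_{i+1}=p_i\wedge\neg c$ for a suitable element $c$, chosen so that $p_{i+1}\neq 0$ still holds. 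This imitates the ultrafilter construction of Lemma~\ref{lem:ods_enuf_homs}, but uses the complete meets to "decide" clopen sets.
\item Let $p=\bigwedge_i p_i$, which exists by Lemma~\ref{lem:zch_alg_seq_complete}.
\end{enumerate}

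The key point is that $p\neq0$. We aim to get this from the characterization in Lemma~\ref{lem:zch_alg_seq_complete}, as follows. The set $A\setminus\{0\}$ is an open upper set, since $\{0\}$ is closed (Hausdorff) and anything above a nonzero element is nonzero. Lemma~\ref{lem:zch_alg_seq_complete} says that a meet of a sequence lies in an open upper set if and only if some finite meet does. That is the wrong direction for our purpose: we need finite meets to be nonzero to imply the infinite meet is nonzero. So we instead use the open lower set $\{0\}$ complement trick. $\{0\}$ itself is not open in general, and this is precisely where the argument needs care.

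A cleaner route, which we would try first, is the following. Take $U=A\setminus\{0\}$, which is an open upper set containing $x$. By the lemma just before the statement, there is a clopen neighbourhood $C$ of $x$ with $y_0=\bigwedge C\in U$, so $y_0\neq 0$. Then we form the collection of all clopen upper sets $W$ such that $\bigwedge W\neq 0$ and $W$ contains $x$, ordered by inclusion.

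Using a maximality argument over the countable basis, we aim to show that among the elements $\bigwedge W$ there is a minimal nonzero one, $p$, with ${\uparw}p$ clopen, and moreover that such a $p$ is an atom. If $p$ is an atom, then ${\uparw}p$ is an ultrafilter, and $h(y)=1\iff p\le y\iff p\wedge y\neq 0$. The continuity of $h$ then follows because:
\begin{itemize}
\item ${\uparw}p$ is closed, being the preimage of $p$ under $y\mapsto y\wedge p$, which is closed since $A$ is Hausdorff;
\item ${\uparw}p$ is open, being the preimage of the open set $U$ under $y\mapsto y\wedge p$, once we know $p$ is an atom.
\end{itemize}

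The main obstacle is therefore producing an atom below $x$. A zero-dimensional compact Hausdorff Boolean algebra is complete and atomic (the profinite case, as in \cite{PS68}). We plan to prove atomicity directly. Suppose there were no atom below $x$. Then, using the open upper set $U$ and the preceding lemma repeatedly, we would obtain a strictly decreasing sequence of nonzero elements $x\ge y_0>y_1>\cdots$, each $y_{n+1}=\bigwedge C_{n+1}$ with $C_{n+1}\subseteq U$ a clopen set. Their meet $y=\bigwedge_n y_n$ lies in the closed set $\bigcap_n \overline{C_n}$ type limit. We then need to show $y\neq 0$ while splitting yields a contradiction. This is the step we would work out most carefully, most likely via compactness of the clopen sets ${\uparw}y_n$ and the continuity of $\wedge$.
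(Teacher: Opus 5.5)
\textbf{There is a genuine gap: the proposal never produces the atom it needs.}

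Your closing reduction is correct. If $p\le x$ is an atom, then ${\uparrow}p$ is an ultrafilter. It is closed as the preimage of $\{p\}$ under $y\mapsto y\wedge p$, and open as the preimage of $A\setminus\{0\}$ under the same map. So $h=\chi_{{\uparrow}p}$ is a continuous homomorphism with $h(x)=1$.

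The gap is that you never show such an atom exists (equivalently, a clopen ultrafilter containing $x$), and that is the whole content of the lemma. Quoting the classical fact that these algebras are atomic would close it, but that fact is essentially equivalent to the lemma, and you explicitly set out to prove it. None of your three routes does so.
\begin{itemize}
\item In the first route, $\bigwedge_i p_i$ can be $0$ even though every $p_i\neq 0$ (think of the tails $\{n,n+1,\dots\}$ in $A=2^\IN\cong\mathcal{P}(\IN)$). Nothing in your choice of $c$ in the ``otherwise'' branch rules this out. As you observe, Lemma~\ref{lem:zch_alg_seq_complete} cannot help, because $\{0\}$ is not open in general.
\item The ``maximality argument over the countable basis'' is only announced. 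A union of a chain of clopen upper sets need not be clopen, so there is no Zorn-type step. You also give no reason why $\bigwedge W$ for a maximal $W$ would be an atom.
\item The atomicity-by-contradiction sketch stops exactly where the work begins. A strictly decreasing sequence of nonzero elements is no contradiction by itself: the same tails live in an atomic algebra. So the contradiction would have to come from some finiteness property of the elements $\bigwedge C$ that you neither state nor prove. Your remaining step, ``show $y\neq 0$'', is just the problem of the first route again.
\end{itemize}

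\textbf{The missing idea.} The paper avoids infinite meets entirely and builds the filter as an increasing union of principal filters.
\begin{itemize}
\item Enumerate all clopen sets $(C_i)_{i\in\IN}$ and put $x_0=x$. Apply the lemma immediately preceding the statement to the open upper set $A\setminus\{0\}$. This lets you choose $x_{i+1}\neq 0$ with $x_i\wedge\bigwedge C_i\in\mathrm{int}({\uparrow}x_{i+1})$ if $x_i\wedge\bigwedge C_i\neq 0$, and with $x_i\in\mathrm{int}({\uparrow}x_{i+1})$ otherwise.
\item Then $W=\bigcup_i{\uparrow}x_i$ is an open filter containing $x$ and omitting $0$, with no limit to control.
\item For $a\in A$, the set $W'=\{z\mid \neg a\vee z\in W\}$ is an open upper set containing $a$. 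By the remark after that lemma, there is $i$ with $a\in C_i$ and $\bigwedge C_i\in W'$.
\item At stage $i$, either $a\in C_i\subseteq{\uparrow}x_{i+1}\subseteq W$. Or $x_i\wedge\bigwedge C_i=0$, and then $W\ni x_i\wedge(\neg a\vee\bigwedge C_i)\le\neg a$, so $\neg a\in W$.
\item Hence $W$ is an ultrafilter whose complement $\neg^{-1}(W)$ is open. So $W$ is clopen and $h=\chi_W$ works.
\end{itemize}
The clopen set $W$ is compact and equals the increasing union $\bigcup_i\mathrm{int}({\uparrow}x_{i+1})$. So in fact $W={\uparrow}x_n$ for some $n$, and $x_n$ is exactly the atom your plan was looking for.
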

\begin{proof}
Fix an enumeration $(C_i)_{i\in\IN}$ of all clopen subsets of $A$. Set $x_0 = x$, and for $i\geq 0$, assume $x_i$ has been defined and $x_i \not=0$. If $x_i \wedge \bigwedge C_i \not=0$, then there is $x_{i+1}\not=0$ such  $x_i \wedge \bigwedge C_i \in int(\uparw x_{i+1})$. Otherwise, let $x_{i+1}\not=0$ be such that $x_i \in int(\uparw x_{i+1})$. Then $W = \bigcup_{i\in\IN} \uparw x_i$ is an open filter containing $x$ that does not contain $0$.

For any $a\in A$, $W' = \{ z \mid \neg a \vee z \in W\}$ is an open upper set containing $a$, so there exists $i\in\IN$ such that $a \in C_i$ and $\bigwedge C_i \in W'$. If $x_i \wedge \bigwedge C_i \not= 0$ then $a \in C_i \subseteq \uparw x_{i+1} \subseteq W$ hence $a\in W$. Otherwise, since $x_i \in W$ and $\neg a \vee \bigwedge C_i \in W$ and
\begin{eqnarray*}
x_i \wedge (\neg a \vee \bigwedge C_i) &= & (x_i \wedge \neg a) \vee (x_i \wedge \bigwedge C_i)\\
&=& (x_i \wedge \neg a)\vee 0\\
&\leq& \neg a,
\end{eqnarray*}
we obtain $\neg a \in W$. Therefore, for each $a\in A$, exactly one of $a$ and $\neg a$ is in $W$, hence $U = \{ a\in A \mid \neg a \in W\}$ is the complement of $W$. The continuity of $\neg$ implies $U$ is open, hence $W$ is clopen. Furthermore, if $a\vee b \in W$, then $\neg a \wedge \neg b = \neg(a\vee w) \not\in W$, hence $\neg a \not\in W$ or $\neg b \not\in W$, which implies $a\in W$ or $b\in W$. Therefore, $W$ is a clopen prime filter containing $x$, hence $h\colon A\to 2$ defined as $h(a)=1 \iff a\in W$ satisfies the lemma.
\qed
\end{proof}


\section{Restricted $\lambda$-calculus}\label{sec:lambdacalculus}


In this section we introduce a $\lambda$-calculus that will make it easier to define and reason with morphisms in $\ODQPol$ and $\CHQPol$. However, we can not use the full $\lambda$-calculus because the categories we work with  are not cartesian closed. We only develop the $\lambda$-calculus enough so that we can prove the Stone duality theorems, and we leave a more complete development for future research.

The restricted $\lambda$-calculus we describe in this section, and its applications in later sections, is inspired by P.~Taylor's restricted $\lambda$-calculus for Abstract Stone Duality (ASD) (see \cite{Taylor02}). The main difference is that we will work with two separate categories, whereas Taylor develops ASD within a single category of locally compact (or more general) ``spaces''. We keep the categories separate because $\ODQPol$ and $\CHQPol$ have good category theoretical properties (they are both pretoposes, the first with inductive types and the second with coinductive types), and these properties are lost when we embed them into the common category of locally compact spaces. Taylor's work is also more concerned with the Sierpinski space $\sierp$ as the dualizing object, whereas we focus on the overt discrete compact Hausdorff space $2$ with two elements.

\subsection{Type Formation rules}

In this paper, we will differentiate between the words \emph{sort} and \emph{type}. A \emph{sort} is a collection of types, and for our purposes will refer to the categories $\ODQPol$ and $\CHQPol$. To avoid writing each rule twice, we use $\Sort$ as a metavariable standing for either $\ODQPol$ or $\CHQPol$, and $\dualSort$ to refer to the other sort. 

Each sort contains two kinds of types. An \emph{object type} is an object of a given sort, and a \emph{morphism type} is a morphism of a given sort. Each \emph{term} will be assigned a type. We use the notation $e:X$ to  mean that $e$ is of object type $X$, and the notation $e:[X\to Y]$ to mean that $e$ is of morphism type $X\to Y$. We use double colons such as $X::\Sort$ and $[X\to Y]::\Sort$ to show the sort of a type. To save space we will often combine the two declarations, such as $e:X::\Sort$.

Below are the object type formation rules:
\begin{enumerate}
\item
\AxiomC{($X\in \ODQPol_\Obj$)}
\UnaryInfC{$X::\ODQPol$}
\DisplayProof
\qquad
\AxiomC{($X\in \CHQPol_\Obj$)}
\UnaryInfC{$X::\CHQPol$}
\DisplayProof
\item
\AxiomC{}
\UnaryInfC{$1::\Sort$}
\DisplayProof
\item
\AxiomC{$X::\Sort$}
\AxiomC{$Y::\Sort$}
\BinaryInfC{$X\times Y::\Sort$}
\DisplayProof
\item
\AxiomC{$X::\Sort$}
\UnaryInfC{$2^X::\dualSort$}
\DisplayProof

As a special case, by taking $X$ to be the terminal object $1$, we obtain $2^1::\Sort$, which we abbreviate to $2::\Sort$.
\end{enumerate}

Morphism types have only one formation rule (note that $X$ and $Y$ in the following are object types, and not morphism types):
\begin{enumerate}
\item
\AxiomC{$X::\Sort$}
\AxiomC{$Y::\Sort$}
\BinaryInfC{$[X\to Y]::\Sort$}
\DisplayProof
\end{enumerate}

A more complete treatment should include other object types compatible with these categories, such as coproducts, inductive types for $\ODQPol$, coinductive types for $\CHQPol$, as well as certain subtypes and quotient types, but we leave that for future research.

\subsection{Term formation rules}

A finite set of variable-type-sort triples of the form $x:X::\Sort$ or $x:X::\dualSort$ is called a \emph{context}, and is usually denoted by the letter $\Gamma$. A context can only contain object types, and can not contain morphism types.

The term formation rules below apply to both $\Sort$ and $\dualSort$, and can be mixed within a single derivation. The double line in the (dual) rule means it can be applied in both direction (up or down). 

It is important to note that an object type variable (e.g. $X$) cannot be pattern matched with a function type variable (e.g. $[X\to Y]$), and vice versa. The (dual-mor) rule acts as a restricted means of converting between object and morphism types via the dual category, thereby increasing the expressiveness of the calculus (see the sample derivations below).

For clarity, we sometimes omit the sorts of types when they are known from context.

\indent

\noindent
\makebox[\linewidth][c]{
\fbox{
\renewcommand{\arraystretch}{4}
\begin{tabular}{ll}
(const-obj)&
\AxiomC{($c \in X$ and $X$ in $\Sort_\Obj$)}
\UnaryInfC{$\Gamma\vdash c:X::\Sort$}
\DisplayProof
\\
(const-mor)&
\AxiomC{($f:X\to Y$ in $\Sort_\Mor$)}
\UnaryInfC{$\Gamma\vdash f:[X\to Y]::\Sort$}
\DisplayProof
\\
(var)&
\AxiomC{$(X :: \Sort)$}
\UnaryInfC{$\Gamma, x:X \vdash x:X::\Sort$}
\DisplayProof
\\
(dual-obj)&
\AxiomC{$\Gamma\vdash e:2::\Sort$}
\doubleLine
\UnaryInfC{$\Gamma\vdash e:2::\dualSort$}
\DisplayProof
\\
(dual-mor)&
\AxiomC{$\Gamma\vdash e:[X\to 2]::\Sort$}
\doubleLine
\UnaryInfC{$\Gamma\vdash e:2^X::\dualSort$}
\DisplayProof
\end{tabular}
\begin{tabular}{ll}
(app)&
\AxiomC{$\Gamma\vdash e_1:[X\to Y]::\Sort$}
\AxiomC{$\Gamma \vdash e_2:X::\Sort$}
\BinaryInfC{$\Gamma \vdash e_1(e_2):Y::\Sort$}
\DisplayProof
\\
(curry)&
\AxiomC{$\Gamma,x:X::\Sort \vdash e : Y::\Sort$}
\UnaryInfC{$\Gamma \vdash \lambda x:X.e: [X\to Y]::\Sort$}
\DisplayProof
\\
(pair)&
\AxiomC{$\Gamma \vdash e_1 : X::\Sort$}
\AxiomC{$\Gamma \vdash e_2 : Y::\Sort$}
\BinaryInfC{$\Gamma \vdash \langle e_1,e_2\rangle :X\times Y::\Sort$}
\DisplayProof
\\
(proj)&
\AxiomC{$\Gamma \vdash e :X\times Y::\Sort$}
\UnaryInfC{$\Gamma \vdash \pi_1(e) : X::\Sort$}
\DisplayProof
\quad
\AxiomC{$\Gamma \vdash e :X\times Y::\Sort$}
\UnaryInfC{$\Gamma \vdash \pi_2(e) : Y::\Sort$}
\DisplayProof
\end{tabular}
}
}

\subsection{Equational rules}

We assume the standard $\alpha$, $\beta$, and $\eta$ rules for reasoning about the equality of terms. 
\begin{itemize}
\item[($\alpha$)]
Terms are equal if they only differ in names of bounded variables.
\item[($\beta$)]
\begin{enumerate}
\item[i.]
$(\lambda x:X.e_1)(e_2) = e_1[e_2/x]$, where $e_1[e_2/x]$ is the result of substituting all free occurrences of $x$ in $e_1$ with the term $e_2$ (while avoiding variable capture).
\item[ii.]
$\pi_1(\langle e_1,e_2\rangle) = e_1$
\item[iii.]
$\pi_2(\langle e_1,e_2\rangle)=e_2$
\end{enumerate}
\item[($\eta$)]
\begin{enumerate}
\item[i.]
$f = \lambda x:X. f(x)$
\item[ii.]
$e = \langle \pi_1(e), \pi_2(e) \rangle$ (for $e: X\times Y$)
\end{enumerate}
\end{itemize}

\subsection{Interpretation}


\subsubsection{Interpretation of object types}

Each object type $X$ will be interpreted as an object $\llbracket X \rrbracket$ of the category corresponding to its sort. The interpretation of object types is as follows:

\begin{enumerate}
\item
\AxiomC{($X\in \ODQPol_\Obj$)}
\UnaryInfC{$\llbracket X \rrbracket = X$}
\DisplayProof
\qquad
\AxiomC{($X\in \CHQPol_\Obj$)}
\UnaryInfC{$\llbracket X \rrbracket  = X$}
\DisplayProof
\item
$\llbracket 1 \rrbracket$ is the terminal object, which we also write as $1$.
\item
\AxiomC{$X::\Sort$}
\AxiomC{$Y::\Sort$}
\BinaryInfC{$\llbracket X\times Y \rrbracket = \llbracket X \rrbracket\times \llbracket Y \rrbracket$}
\DisplayProof
\item
\AxiomC{($X::\ODQPol$)}
\UnaryInfC{$\llbracket 2^X \rrbracket=\calD_\Obj(\llbracket X \rrbracket) \in \CHQPol_\Obj$}
\DisplayProof
\qquad
\AxiomC{($X::\CHQPol$)}
\UnaryInfC{$\llbracket 2^X \rrbracket =\calE_\Obj(\llbracket X \rrbracket) \in \ODQPol_\Obj$}
\DisplayProof
\end{enumerate}

\subsubsection{Interpretation of terms}

A context of the form
\[\Gamma=\{x_1:X_1::\ODQPol,\ldots,x_m:X_m::\ODQPol,y_1:Y_1::\CHQPol,\ldots,y_n:Y_n::\CHQPol\}\]
is mapped to a space of the form $\llbracket \Gamma \rrbracket = \IN^m \times (2^\IN)^n$, with the intention that elements of $\llbracket \Gamma \rrbracket$ represent choices of values for the corresponding variables. If $\Gamma$ is empty, then $\llbracket \Gamma \rrbracket$ is the singleton space. Given an object type $\llbracket X \rrbracket$, we write $\equiv_X$ for the corresponding PER. Recall that in both categories, the terminal object is the PER where all elements are equivalent.

In typical semantics, a term in context $\Gamma \vdash e:X$ is interpreted as a morphism $\llbracket \Gamma \rrbracket \to \llbracket X \rrbracket$ in the category. However, we must take a different approach, because in our case the context contains objects from different categories, and such a morphism might not exist in either category. 

Instead, a term in context of the form $\Gamma \vdash e$ will be interpreted as a (partial) function $\llbracket \Gamma\vdash e\rrbracket \colon \llbracket \Gamma \rrbracket\to \Sort_\Mor$, where $\Sort$ is the sort of the type of $e$. The output of the function $\llbracket \Gamma\vdash e\rrbracket(a_1,\ldots,a_m,b_1,\ldots,b_n)$ is only meaningful when the inputs satisfy $a_i \equiv_ {X_i} a_i$ (for $1\leq i\leq m$ ) and $b_j \equiv_{Y_j} b_j$ (for $1 \leq j \leq n$). Furthermore, the interpretation is invariant in the sense that $\llbracket \Gamma\vdash e\rrbracket(a'_1,\ldots,a'_m,b'_1,\ldots,b'_n) = \llbracket \Gamma\vdash e\rrbracket(a_1,\ldots,a_m,b_1,\ldots,b_n)$ whenever $a'_i \equiv_ {X_i} a_i$ and $b'_j \equiv_{Y_j} b_j$. 

When $\llbracket \Gamma\vdash e \rrbracket$ is given valid inputs, a term $\Gamma \vdash e:X::\Sort$ with an object type will be mapped to a morphism from the terminal object $1$ to $\llbracket X \rrbracket$, and a term $\Gamma \vdash e:[X\to Y]::\Sort$ with a morphism type will be mapped to a morphism from $\llbracket X \rrbracket$ to $\llbracket Y \rrbracket$. 

Here and in the following, $S=\IN$ and $\calH=\calE_\Obj$ and $\calH^*=\calD_\Obj$ when $\Sort=\ODQPol$, and $S = 2^\IN$ and $\calH=\calD_\Obj$ and $\calH^* = \calE_\Obj$ when $\Sort=\CHQPol$. Thus, the interpretation of $2$ in $\Sort$ is $\llbracket 2 \rrbracket = \llbracket 2^1 \rrbracket = \calH(\llbracket 1 \rrbracket)$.

The interpretation is defined recursively according to the term formation rules as follows (below we will assume $\Gamma$ contains $n$ variables):

\begin{itemize}
\item
(const-obj): Given $c\in S$ with $c \equiv_X c$, $\llbracket \Gamma\vdash c:X \rrbracket$ is the constant function to the morphism $\langle G,\equiv_1, \equiv_X\rangle$ with graph $G(a,b) \iff b \equiv_X c$.
\item
(const-mor): Given $f\colon X\to Y$ in $\Sort_\Mor$, $\llbracket \Gamma\vdash f \rrbracket$ is the constant function with value $f$.
\item
(var): $\llbracket \Gamma,x:X \vdash x:X \rrbracket \colon \llbracket \Gamma \rrbracket\times S \to \Sort_\Mor$ is defined so that $\llbracket \Gamma,x:X \vdash x:X \rrbracket(z_1,\ldots,z_n,c)$ is the morphism $\langle G,\equiv_1, \equiv_X\rangle$ with graph $G(a,b) \iff b \equiv_X c$.
\item
(app): $\llbracket \Gamma\vdash e_1(e_2) \rrbracket (z_1,\ldots,z_n) = \llbracket \Gamma\vdash e_1\rrbracket(z_1,\ldots,z_n) \circ \llbracket \Gamma\vdash e_2 \rrbracket(z_1,\ldots,z_n)$, where $\circ$ is composition in $\Sort$.
\item
(curry): $\llbracket \Gamma\vdash \lambda x:X.e:[X\to Y] \rrbracket (z_1,\ldots,z_n)$ is the morphism $\langle G, \equiv_X,\equiv_Y\rangle$ with graph $G(a,b)$ if and only if $a\equiv_X a$ and $\langle a,b\rangle$ is in the graph of the morphism $\llbracket \Gamma,x:X \vdash e\rrbracket (z_1,\ldots,z_n,a)$. To see that this is well-defined, note that for valid inputs $\llbracket \Gamma,x:X \vdash e:Y\rrbracket (z_1,\ldots,z_n,a)$ is assumed to be a morphism $\langle H,\equiv_1,\equiv_Y\rangle$. In this case, $a$ is a valid input if and only if $a \equiv_X a$. The PER for the terminal object satisfies $a \equiv_1 a$, hence there is $b$ with $H(a,b)$. This shows that there is $b$ such that $G(a,b)$. The other requirements for $\langle G, \equiv_X,\equiv_Y\rangle$ to be a morphism are easily verified.
\item
(dual): For (dual-mor), by assumption $\llbracket \Gamma \vdash e:[X\to 2]\rrbracket(z_1,\ldots,z_n)$ is a morphism $f\colon X\to \calH(1)$ in $\Sort$. Then $\llbracket \Gamma \vdash e:2^X\rrbracket(z_1,\ldots,z_n)$ is defined by applying the bijection in Theorem~\ref{thrm:adjunction} to $f$ to obtain the corresponding morphism $g\colon 1 \to \calH^*(X)$ in $\dualSort$. 

Applying the rule in the other direction is similar. The rule (dual-obj) can be treated as a special case of (dual-mor) with $X=1$. 
\item
(pair): By assumption, $\llbracket \Gamma \vdash e_1 : X \rrbracket$ and $\llbracket\Gamma \vdash e_2 : Y\rrbracket$ on valid inputs return morphisms $f\colon 1\to \llbracket X \rrbracket$ and $g\colon 1\to \llbracket Y\rrbracket$. Then $\llbracket \Gamma \vdash \langle e_1,e_2\rangle :X\times Y \rrbracket$ (on the same inputs) is defined to be the uniquely determined map $\langle f,g\rangle\colon 1\to \llbracket X\rrbracket \times \llbracket Y\rrbracket = \llbracket X\times Y\rrbracket$. 
\item
(proj): This is a special case of invoking (const-mor) for the projection $\pi_1\colon \llbracket X\rrbracket\times\llbracket Y \rrbracket \to \llbracket X\rrbracket$ (or $\pi_2\colon \llbracket X\rrbracket\times\llbracket Y \rrbracket \to \llbracket Y\rrbracket$), followed by an application of (app) to the term $e:X\times Y$.
\end{itemize}

Note that the interpretation of a lambda term is computable if it is formed using only computable instances of (const-obj) and (const-mor).

\subsubsection{Soundness of equational rules}

One complication with the restricted $\lambda$-calculus is that because of the (dual-obj) and (dual-mor) rules, the derivation of a lambda term is not necessarily unique. Here we check that this does not effect the interpretation of the terms, and that the interpretation is sound with respect to $\alpha$, $\beta$, and $\eta$-equivalence. 

To avoid this problem, we define a second interpretation of the terms as morphisms within the category $\QPol$ of quasi-Polish spaces and continuous functions. Let $\calF\colon \ODQPol \to \QPol$ and $\calG\colon \CHQPol \to \QPol$ be the functors from Theorems~\ref{thrm:ODQPol} and \ref{thrm:CHQPol}, but with their codomains extended to the common category $\QPol$. Interpret object types and morphism types as objects of $\QPol$ as follows:
\begin{itemize}
\item
$\llbracket X \rrbracket' = \calF(\llbracket X \rrbracket)$ for each object type $X$ of sort $\ODQPol$,
\item
$\llbracket X \rrbracket' = \calG(\llbracket X \rrbracket)$ for each object type $X$ of sort $\CHQPol$,
\item
$\llbracket [X\to Y] \rrbracket' = \llbracket Y \rrbracket'^{\llbracket X \rrbracket'}$ for each morphism type $[X\to Y]$, where $\llbracket Y \rrbracket'^{\llbracket X \rrbracket'}$ is the space of continuous functions with the compact-open topology.
\end{itemize}
$\QPol$ is not cartesian closed, but its exponentiable objects are precisely the locally compact quasi-Polish spaces, and the topology on the exponential object is the compact-open topology (Theorem~16.3 of \cite{DGJL}). Therefore, the above interpretations are valid within our restricted $\lambda$-calculus. Furthermore, $\llbracket 2^X\rrbracket'$ and $\llbracket [X\to 2] \rrbracket'$ are isomorphic (see the discussion after Definition~\ref{def:functor_D}). 

Under this interpretation, the distinction between sorts, the distinction between object and morphism types, and the (dual) rules can all be ignored, and any derivation in our restricted $\lambda$-calculus becomes a derivation in the simply typed lambda calculus. Therefore, any term in context $\Gamma \vdash e:X$ derived in the restricted $\lambda$-calculus, can be interpreted as a morphism $\llbracket \Gamma\vdash e:X \rrbracket' \colon \llbracket \Gamma \rrbracket' \to \llbracket X \rrbracket'$ in $\QPol$ in the usual way. Clearly, $\alpha$, $\beta$, and $\eta$-equivalence are sound for this interpretation.

Now assume $\Gamma$ is a context containing object types $X_1,\ldots,X_n$. For valid $z_1,\dots, z_n$ (i.e., $z_i \equiv_{X_i} z_i$ for $1\leq i\leq n$) we write $\vec{z} \colon 1 \to \llbracket \Gamma \rrbracket'$ for the morphism in $\QPol$ whose $i$-th projection is $z_i$ for each $i$. Then it can be shown by induction on deriviations in the restricted $\lambda$-calculus that for all valid $\vec{z}$,
\begin{itemize}
\item
$\llbracket \Gamma\vdash e:X \rrbracket'\circ\vec{z} = \calF(\llbracket \Gamma\vdash e:X \rrbracket (z_1,\ldots,z_n))$ when $e$ has an object type of sort $\ODQPol$,
\item
$\llbracket \Gamma\vdash e:[X\to Y] \rrbracket'\circ\vec{z}$ is the curried form of $\calF(\llbracket \Gamma\vdash e:[X\to Y] \rrbracket (z_1,\ldots,z_n))$ when $e$ has a morphism type of sort $\ODQPol$, 
\end{itemize}
and similarly when $e$ has a type of sort $\CHQPol$ (but using $\calG$ instead of $\calF$). Since $\calF$ and $\calG$ are full and faithful embeddings, and $\QPol$ is well-pointed, this shows that the interpretation of a well-formed lambda term does does not depend on its derivation, and that the equational rules are sound.

\subsection{Sample derivations and interpretations}

To reduce clutter, in the following examples we use the following variation of (app) which will allow us to reduce the size of the contexts.

\vspace{0.5cm}

\makebox[\linewidth][c]{
\AxiomC{$\Gamma\vdash e_1:[X\to Y]::\Sort$}
\AxiomC{$\Delta \vdash e_2:X::\Sort$}
\RightLabel{(app')}
\BinaryInfC{$\Gamma, \Delta \vdash e_1(e_2):Y::\Sort$}
\DisplayProof
}

\vspace{0.5cm}

The context $\Gamma,\Delta$ is the union of $\Gamma$ and $\Delta$, not a multiset. Any derivation using (app') can be modified into a deriviation using the original (app) rule by weakening the contexts.

\subsubsection{Sample derivation (composing $f\colon X\to Y$ and $g\colon Y\to Z$)}

\vspace{0.5cm}

\makebox[\linewidth][c]{
\AxiomC{$\vdash g:[Y\to Z]::\Sort$}
\AxiomC{$\vdash f:[X\to Y]::\Sort$}
\AxiomC{}
\RightLabel{(var)}
\UnaryInfC{$x:X \vdash x:X::\Sort$}
\RightLabel{(app')}
\BinaryInfC{$x:X \vdash f(x):Y::\Sort$}
\RightLabel{(app')}
\BinaryInfC{$x:X \vdash g(f(x)):Z::\Sort$}
\RightLabel{(curry)}
\UnaryInfC{$\vdash \lambda x:X.g(f(x)):[X\to Z]::\Sort$}
\DisplayProof
}

\vspace{0.5cm}

The function $\llbracket \vdash \lambda x:X.g(f(x)):[X\to Z]::\Sort \rrbracket $ maps the unique element of the singleton space to the morphism $g\circ f$.

\subsubsection{Sample derivation $(2^f\colon 2^Y\to 2^X)$}

\vspace{0.5cm}

\makebox[\linewidth][c]{
\AxiomC{}
\RightLabel{(var)}
\UnaryInfC{$\varphi:2^Y::\dualSort \vdash \varphi:2^Y::\dualSort$}
\RightLabel{(dual-mor)}
\UnaryInfC{$\varphi:2^Y::\dualSort \vdash \varphi:[Y\to 2]::\Sort$}
\AxiomC{$\vdash f: [X\to Y]::\Sort$}
\AxiomC{}
\RightLabel{(var)}
\UnaryInfC{$x:X::\Sort \vdash x:X::\Sort$}
\RightLabel{(app')}
\BinaryInfC{$x:X::\Sort \vdash f(x):Y::\Sort$}
\RightLabel{(app')}
\BinaryInfC{$\varphi:2^Y::\dualSort,x:X::\Sort \vdash \varphi(f(x)):2::\Sort$}
\RightLabel{(curry)}
\UnaryInfC{$\varphi:2^Y::\dualSort \vdash \lambda x:X.\varphi(f(x)):[X\to 2]::\Sort$}
\RightLabel{(dual-mor)}
\UnaryInfC{$\varphi:2^Y::\dualSort \vdash \lambda x:X.\varphi(f(x)): 2^X::\dualSort$}
\RightLabel{(curry)}
\UnaryInfC{$\vdash \lambda \varphi:2^Y. \lambda x:X.\varphi(f(x)): [2^Y \to 2^X]::\dualSort$}
\DisplayProof
}

\vspace{0.5cm}

We step through the process of interpreting the resulting lambda term using the above derivation (and skipping the easier parts). For concreteness, we will assume $f\colon X\to Y$ is a morphism in $\ODQPol$. We omit the subscripts on the functors to simplify notation.

\begin{itemize}
\item
The function $\llbracket  \varphi:2^Y::\CHQPol \vdash \varphi:2^Y::\CHQPol \rrbracket$ maps  valid $c\in 2^\IN$ to the morphism $\varphi_c\colon \llbracket 1 \rrbracket \to \calD(\llbracket Y\rrbracket)$ with graph $G_c(a,b) \iff b \equiv_{\calD(\llbracket Y\rrbracket)} c$.
\item
$\llbracket  \varphi:2^Y::\CHQPol \vdash \varphi:[Y\to 2]::\ODQPol \rrbracket$ maps valid $c\in 2^\IN$ to the morphism $\calE(\varphi_c)\circ\eta(\llbracket Y\rrbracket)\colon\llbracket Y \rrbracket \to \calE(\llbracket 1 \rrbracket)$.
\item
$\llbracket \varphi:2^Y::\CHQPol,x:X::\ODQPol \vdash \varphi(f(x)):2::\ODQPol\rrbracket$ maps valid $c\in 2^\IN$ and $d\in\IN$ to the morphism $\calE(\varphi_c)\circ\eta(\llbracket Y\rrbracket) \circ f \circ x_d \colon \llbracket 1 \rrbracket \to \calE(\llbracket 1 \rrbracket)$, where $x_d\colon \llbracket 1 \rrbracket \to \llbracket X \rrbracket$ has graph $G_d(a,b) \iff b\equiv_X d$.
\item
$\llbracket\varphi:2^Y::\CHQPol \vdash \lambda x:X.\varphi(f(x)):[X\to 2]::\ODQPol\rrbracket$ maps valid $c\in 2^\IN$ to the morphism $\calE(\varphi_c)\circ\eta(\llbracket Y\rrbracket) \circ f \colon X \to \calE(\llbracket 1 \rrbracket)$. 
\item
$\llbracket\varphi:2^Y::\CHQPol \vdash \lambda x:X.\varphi(f(x)): 2^X::\CHQPol\rrbracket$ maps valid $c\in2^\IN$ to the morphism $\calD\big(\calE(\varphi_c)\circ\eta(\llbracket Y\rrbracket) \circ f\big)\circ \epsilon(\llbracket 1 \rrbracket) \colon \llbracket 1 \rrbracket \to \calD(X)$. Since $\calD$ is contravariant, this is equal to the morphism $\calD(f) \circ \calD(\eta(\llbracket Y\rrbracket)) \circ \calD\calE(\varphi_c)\circ \epsilon(\llbracket 1 \rrbracket)$. By naturality of $\epsilon$, this is equal to $\calD(f) \circ \calD(\eta(\llbracket Y\rrbracket)) \circ \epsilon(\calD(\llbracket Y \rrbracket)) \circ \varphi_c$, and by Theorem~\ref{thrm:adjunction}, this is equal to $\calD(f) \circ \varphi_c$.
\item
Finally, $\llbracket\vdash \lambda \varphi:2^Y. \lambda x:X.\varphi(f(x)): [2^Y \to 2^X]::\CHQPol\rrbracket$ maps the unique element of the singleton space to the morphism $\calD(f)$.
\end{itemize}
Similarly, when $\Sort = \CHQPol$ the term is interpreted as $\calE(f)$.

\subsubsection{Sample derivation $(\eta_X\colon X\to 2^{2^X})$}

\vspace{0.5cm}

\AxiomC{}
\RightLabel{(var)}
\UnaryInfC{$\varphi:2^X::\dualSort \vdash \varphi:2^X::\dualSort$}
\RightLabel{(dual-mor)}
\UnaryInfC{$\varphi:2^X::\dualSort \vdash \varphi:[X\to 2]::\Sort$}
\AxiomC{}
\RightLabel{(var)}
\UnaryInfC{$x:X::\Sort\vdash x:X::\Sort$}
\RightLabel{(app')}
\BinaryInfC{$x:X::\Sort,\varphi:2^X::\dualSort \vdash \varphi(x):2::\Sort$}
\RightLabel{(dual-obj)}
\UnaryInfC{$x:X::\Sort,\varphi:2^X::\dualSort \vdash \varphi(x):2::\dualSort$}
\RightLabel{(curry)}
\UnaryInfC{$x:X::\Sort \vdash \lambda \varphi:2^X.\varphi(x):[{2^X}\to 2]::\dualSort$}
\RightLabel{(dual-mor)}
\UnaryInfC{$x:X::\Sort \vdash \lambda \varphi:2^X.\varphi(x):2^{2^X}::\Sort$}
\RightLabel{(curry)}
\UnaryInfC{$\vdash \lambda x:X.\lambda \varphi:2^X.\varphi(x):[X\to 2^{2^X}]::\Sort$}
\DisplayProof

\vspace{0.5cm}

The lambda term is interpreted as follows. Again, for concreteness we will assume $\Sort = \ODQPol$. We skip the first few steps, which are the same as deriving $2^f$ when $f$ is the identity on $X$.
\begin{itemize}
\item
$\llbracket \varphi:2^X::\CHQPol,x:X::\ODQPol \vdash \varphi(x):2::\ODQPol\rrbracket$ maps valid $c\in 2^\IN$ and $d\in\IN$ to the morphism $\calE(\varphi_c)\circ\eta(\llbracket X\rrbracket) \circ x_d \colon \llbracket 1 \rrbracket \to \calE(\llbracket 1 \rrbracket)$, where $\varphi_c\colon \llbracket 1 \rrbracket \to \calD(\llbracket X\rrbracket)$ has graph $G_c(a,b) \iff b \equiv_{\calD(\llbracket X\rrbracket)} c$, and $x_d\colon \llbracket 1 \rrbracket \to \llbracket X \rrbracket$ has graph $G_d(a,b) \iff b\equiv_X d$.
\item
$\llbracket x:X::\ODQPol,\varphi:2^X::\CHQPol \vdash \varphi(x):2::\CHQPol \rrbracket$ maps valid $c\in 2^\IN$ and $d\in\IN$ to the morphism $\calD\big(\calE(\varphi_c)\circ\eta(\llbracket X\rrbracket) \circ x_d\big)\circ \epsilon(\llbracket 1 \rrbracket) \colon \llbracket 1 \rrbracket \to \calE_\Obj(\llbracket 1 \rrbracket)$. Since $\calD$ is contravariant, this equals $\calD(x_d)\circ\calD(\eta(\llbracket X\rrbracket))\circ \calD\calE(\varphi_c)\circ \epsilon(\llbracket 1 \rrbracket)$. By naturality of $\epsilon$, this is equal to $\calD(x_d)\circ\calD(\eta(\llbracket X\rrbracket))\circ\epsilon(\calD(\llbracket X \rrbracket)) \circ \varphi_c$, and by Theorem~\ref{thrm:adjunction}, this is equal to $\calD(x_d)\circ\varphi_c$.
\item
$\llbracket x:X::\ODQPol \vdash \lambda \varphi:2^X.\varphi(x):[{2^X}\to 2]::\CHQPol\rrbracket$ maps valid $d\in\IN$ to the morphism $\calD(x_d)\colon \calD(\llbracket X \rrbracket)\to\calD(\llbracket 1 \rrbracket)$.
\item
$\llbracket x:X::\ODQPol \vdash \lambda \varphi:2^X.\varphi(x):2^{2^X}::\ODQPol\rrbracket$ maps valid $d\in\IN$ to the morphism $\calE\calD(x_d)\circ\eta(\llbracket 1 \rrbracket) \colon \llbracket 1 \rrbracket \to \calE\calD(\llbracket X \rrbracket)$. By naturality of $\eta$, this is equal to $\eta(\llbracket X \rrbracket)\circ x_d$.
\item
Finally, $\llbracket\vdash \lambda.x:X.\lambda \varphi:2^X.\varphi(x):[X\to 2^{2^X}]::\Sort\rrbracket$ maps the unique element of the singleton space to the morphism $\eta(\llbracket X \rrbracket)$.
\end{itemize}
Similarly, when $\Sort=\CHQPol$ the term is interpreted as $\epsilon(\llbracket X \rrbracket)$.


\subsubsection{Sample derivation $(\mu_X\colon 2^4X\to 2^{2^X})$}

\vspace{0.5cm}

\makebox[\linewidth][c]{
\AxiomC{}
\RightLabel{(var)}
\UnaryInfC{$F:2^{2^X}::\Sort \vdash F:2^{2^X}::\Sort$}
\RightLabel{(dual-mor)}
\UnaryInfC{$F:2^{2^X}::\Sort \vdash F:[2^X \to 2]::\dualSort$}
\AxiomC{}
\RightLabel{(var)}
\UnaryInfC{$\varphi:2^X::\dualSort \vdash \varphi:2^X::\dualSort$}
\RightLabel{(app')}
\BinaryInfC{$\varphi:2^X::\dualSort,F:2^{2^X}::\Sort \vdash F(\varphi):2::\dualSort$}
\RightLabel{(dual-obj)}
\UnaryInfC{$\varphi:2^X::\dualSort,F:2^{2^X}::\Sort \vdash F(\varphi):2::\Sort$}
\RightLabel{(curry)}
\UnaryInfC{$\varphi:2^X::\dualSort \vdash \lambda F:2^{2^X}.F(\varphi):[2^{2^X} \to 2]::\Sort$}
\RightLabel{(dual-mor)}
\UnaryInfC{$\varphi:2^X::\dualSort \vdash \lambda F:2^{2^X}.F(\varphi):2^3 X::\dualSort$}
\UnaryInfC{$\vdots$}
\DisplayProof
}

\makebox[\linewidth][c]{
\AxiomC{}
\RightLabel{(var)}
\UnaryInfC{$\IF:2^4 X ::\Sort \vdash \IF:2^4 X ::\Sort$}
\RightLabel{(dual-mor)}
\UnaryInfC{$\IF:2^4 X ::\Sort \vdash \calF:[2^3 X \to 2]::\dualSort$}
\AxiomC{$\vdots$}
\UnaryInfC{$\varphi:2^X::\dualSort \vdash \lambda F:2^{2^X}.F(\varphi):2^3 X::\dualSort$}
\RightLabel{(app')}
\BinaryInfC{$\IF:2^4 X::\Sort, \varphi:2^X::\dualSort \vdash \IF(\lambda F:2^{2^X}.F(\varphi)):2::\dualSort$}
\RightLabel{(curry)}
\UnaryInfC{$\IF:2^4 X::\Sort\vdash \lambda \varphi:2^X.\IF(\lambda F:2^{2^X}.F(\varphi)):[2^X \to 2]::\dualSort$}
\RightLabel{(dual-mor)}
\UnaryInfC{$\IF:2^4 X::\Sort\vdash \lambda \varphi:2^X.\IF(\lambda F:2^{2^X}.F(\varphi)):2^{2^X}::\Sort$}
\RightLabel{(curry)}
\UnaryInfC{$\vdash \lambda \IF:2^4 X. \lambda \varphi:2^X.\IF(\lambda F:2^{2^X}.F(\varphi)): [2^4 X \to 2^{2^X}]::\Sort$}
\DisplayProof
}

\vspace{0.5cm}

In the above derivation, $2^3 X $ is an abbreviation for $2^{2^{2^X}}$, and similarly $2^4 X$ is an abbreviation for $X$ on top of a tower of four $2$'s.

Using the lambda calculus it is easy to verify that $\mu_X = 2^{\eta_{(2^X)}}$ (see Section~3 of \cite{Taylor02}), and the derivation above uses this fact. The first half of the derivation (up until the break) is the derivation of $\eta_{(2^X)}$, and the second half is the derivation of $2^f$ for $f=\eta_{(2^X)}$ (minus the redundant (curry) and (app') where the derivations combine). 

Therefore, when $\Sort = \ODQPol$ the morphism $\eta_{(2^X)}$ is in $\CHQPol$, hence the term $\mu_X$ corresponds to the morphism $\calE(\epsilon(\llbracket 2^X\rrbracket))=\calE(\epsilon(\calD(\llbracket X\rrbracket)))$.  Similarly, when $\Sort = \CHQPol$ the term $\mu_X$ corresponds to the morphism $\calD(\eta(\calE(\llbracket X\rrbracket)))$.

It follows that the interpretation of $(2^{2^X},\eta_X,\mu_X)$ is the corresponding monad from Lemma~\ref{lem:monads}, independent of whether $\Sort$ is $\ODQPol$ or $\CHQPol$.

\subsubsection{Sample derivation $(\sigma_{X,Y}\colon X\times 2^{2^Y}\to 2^{2^{(X\times Y)}})$}

Next, we derive the strength for the monad (see Section~3 of \cite{Taylor02}). We leave the interpretation to the reader.

\vspace{0.5cm}

\makebox[\linewidth][c]{
\small{
\AxiomC{}
\RightLabel{(var)}
\UnaryInfC{$e:X\times 2^{2^Y}::\Sort,y:Y::\Sort\vdash e:X\times 2^{2^Y}::\Sort$}
\RightLabel{(proj)}
\UnaryInfC{$e:X\times 2^{2^Y}::\Sort,y:Y::\Sort\vdash \pi_1(e) :X::\Sort$}
\AxiomC{}
\RightLabel{(var)}
\UnaryInfC{$e:X\times 2^{2^Y}::\Sort,y:Y::\Sort\vdash y:Y::\Sort$}
\RightLabel{(pair)}
\BinaryInfC{$e:X\times 2^{2^Y}::\Sort,y:Y::\Sort\vdash \langle \pi_1(e),y\rangle :X\times Y::\Sort$}
\UnaryInfC{$\vdots$}

\DisplayProof
}
}

\vspace{0.5cm}

\noindent
\makebox[\linewidth][c]{
\small{
\AxiomC{}
\RightLabel{(var)}
\UnaryInfC{$\psi:2^{(X\times Y)}::\dualSort\vdash \psi: 2^{(X\times Y)}::\dualSort$}
\RightLabel{(dual-mor)}
\UnaryInfC{$\psi:2^{(X\times Y)}::\dualSort\vdash \psi: [(X\times Y) \to 2]::\Sort$}

\AxiomC{$\vdots$}
\UnaryInfC{$e:X\times 2^{2^Y}::\Sort,y:Y::\Sort\vdash \langle \pi_1(e),y\rangle :X\times Y::\Sort$}
\RightLabel{(app')}
\BinaryInfC{$e:X\times 2^{2^Y}::\Sort,\psi:2^{(X\times Y)}::\dualSort,y:Y::\Sort \vdash \psi(\pi_1(e),y):2::\Sort$}

\RightLabel{(curry)}
\UnaryInfC{$e:X\times 2^{2^Y}::\Sort,\psi:2^{(X\times Y)}::\dualSort\vdash \lambda y:Y.\psi(\pi_1(e),y):[Y \to 2]::\Sort$}
\RightLabel{(dual-mor)}
\UnaryInfC{$e:X\times 2^{2^Y}::\Sort,\psi:2^{(X\times Y)}::\dualSort\vdash \lambda y:Y.\psi(\pi_1(e),y):2^Y::\dualSort$}
\UnaryInfC{$\vdots$}
\DisplayProof
}
}

\vspace{0.5cm}

\noindent
\makebox[\linewidth][c]{
\small{
\AxiomC{}
\RightLabel{(var)}
\UnaryInfC{$e:X\times 2^{2^Y}::\Sort\vdash e:X\times 2^{2^Y}::\Sort$}
\RightLabel{(proj)}
\UnaryInfC{$e:X\times 2^{2^Y}::\Sort\vdash \pi_2(e):2^{2^Y}::\Sort$}
\RightLabel{(dual-mor)}
\UnaryInfC{$e:X\times 2^{2^Y}::\Sort\vdash \pi_2(e):[{2^Y}\to 2]::\dualSort$}

\AxiomC{$\vdots$}
\UnaryInfC{$e:X\times 2^{2^Y}::\Sort,\psi:2^{(X\times Y)}::\dualSort\vdash \lambda y:Y.\psi(\pi_1(e),y):2^Y::\dualSort$}

\RightLabel{(app')}
\BinaryInfC{$e:X\times 2^{2^Y}::\Sort,\psi:2^{(X\times Y)}::\dualSort\vdash \pi_2(e)(\lambda y:Y.\psi(\pi_1(e),y)):2::\dualSort$}
\RightLabel{(curry)}
\UnaryInfC{$e:X\times 2^{2^Y}::\Sort\vdash \lambda \psi:2^{(X\times Y)}.\pi_2(e)(\lambda y:Y.\psi(\pi_1(e),y)):[2^{(X\times Y)}\to 2]::\dualSort$}

\RightLabel{(dual-mor)}
\UnaryInfC{$e:X\times 2^{2^Y}::\Sort\vdash \lambda \psi:2^{(X\times Y)}.\pi_2(e)(\lambda y:Y.\psi(\pi_1(e),y)):2^{2^{(X\times Y)}}::\Sort$}

\RightLabel{(curry)}
\UnaryInfC{$\vdash \lambda e:X\times 2^{2^Y}. \lambda \psi:2^{(X\times Y)}.\pi_2(e)(\lambda y:Y.\psi(\pi_1(e),y)):[X\times 2^{2^Y} \to 2^{2^{(X\times Y)}}]::\Sort$}
\DisplayProof
}
}

\vspace{0.5cm}

\subsubsection{Sample derivation $\hat{\wedge}\colon A\times A \to 2^{2^A}$}

Finally, we derive some terms that will be used when showing the equivalence of monad algebras and Boolean algebras in Section~\ref{sec:algebras_equiv}. We leave the interpretation to the reader. First,

\vspace{0.5cm}

\noindent
\makebox[\linewidth][c]{
\AxiomC{}
\RightLabel{(var)}
\UnaryInfC{$\varphi:2^A::\dualSort\vdash \varphi :2^A::\dualSort$}
\RightLabel{(dual-mor)}
\UnaryInfC{$\varphi:2^A::\dualSort\vdash \varphi :[A \to 2]::\Sort$}
\AxiomC{}
\RightLabel{(var)}
\UnaryInfC{$e:A\times A::\Sort \vdash e:A\times A::\Sort$}
\RightLabel{(proj)}
\UnaryInfC{$e:A\times A::\Sort \vdash \pi_1(e): A::\Sort$}
\RightLabel{(app')}
\BinaryInfC{$e:A\times A::\Sort,\varphi:2^A::\dualSort\vdash \varphi(\pi_1(a)) :2::\Sort$}
\DisplayProof
}

\vspace{0.5cm}

Similarly, derive $e:A\times A::\Sort,\varphi:2^A::\dualSort\vdash \varphi(\pi_2(a)) :2::\Sort$, and combine the two:

\vspace{0.5cm}

\noindent
\makebox[\linewidth][c]{
\AxiomC{$\vdots$}
\UnaryInfC{$e:A\times A::\Sort,\varphi:2^A::\dualSort\vdash \varphi(\pi_1(a)) :2::\Sort$}
\AxiomC{$\vdots$}
\UnaryInfC{$e:A\times A::\Sort,\varphi:2^A::\dualSort\vdash \varphi(\pi_2(a)) :2::\Sort$}
\RightLabel{(pair)}
\BinaryInfC{$e:A\times A::\Sort,\varphi:2^A::\dualSort\vdash \langle \varphi(\pi_1(a)),\varphi(\pi_2(a))\rangle :2\times 2::\Sort$}
\DisplayProof
}

\vspace{0.5cm}

Finally, apply $\wedge$ and curry.

\vspace{0.5cm}

\noindent
\makebox[\linewidth][c]{
\AxiomC{($\wedge:2\times 2\to  2$ in $\Sort_\Mor$)}
\RightLabel{(const-mor)}
\UnaryInfC{$\vdash \wedge: [2\times 2\to  2]::\Sort$}

\AxiomC{$\vdots$}
\UnaryInfC{$e:A\times A::\Sort,\varphi:2^A::\dualSort\vdash \langle \varphi(\pi_1(a)),\varphi(\pi_2(a))\rangle :2\times 2::\Sort$}

\RightLabel{(app')}
\BinaryInfC{$e:A\times A::\Sort,\varphi:2^A::\dualSort\vdash \varphi(\pi_1(a)) \wedge \varphi(\pi_2(a)):2::\Sort $}

\RightLabel{(dual-obj)}
\UnaryInfC{$e:A\times A::\Sort,\varphi:2^A::\dualSort\vdash \varphi(\pi_1(a)) \wedge \varphi(\pi_2(a)):2::\dualSort $}

\RightLabel{(curry)}
\UnaryInfC{$e:A\times A::\Sort\vdash \lambda \varphi:2^A.\varphi(\pi_1(a)) \wedge \varphi(\pi_2(a)):[2^A \to 2]::\dualSort $}

\RightLabel{(dual-mor)}
\UnaryInfC{$e:A\times A::\Sort\vdash \lambda \varphi:2^A.\varphi(\pi_1(a)) \wedge \varphi(\pi_2(a)): 2^{2^A}::\Sort $}

\RightLabel{(curry)}
\UnaryInfC{$\vdash \lambda e:A\times A. \lambda \varphi:2^A.\varphi(\pi_1(a)) \wedge \varphi(\pi_2(a)): [A\times A \to 2^{2^A}]::\Sort $}
\DisplayProof
}

\vspace{0.5cm}


\section{Equivalence between monad algebras and Boolean algebras}\label{sec:algebras_equiv}

As we saw in the previous section, the monad $(\calE\circ \calD, \eta,  \calE\epsilon\calD)$ on $\ODQPol$, the monad $(\calD\circ \calE, \epsilon, \calD\eta\calE)$ on $\CHQPol$, and the restriction of the latter monad to $\ZCHQPol$, all correspond to the monad $(2^{2^{(\cdot)}},\eta,\mu)$ defined by the lambda terms:
\begin{eqnarray*}
\eta_X &=& \lambda x:X.\lambda f:2^X.f(x):[X\to 2^{2^X}]\\
\mu_X &=& \lambda \IF:2^4 X. \lambda f:2^X.\IF(\lambda F:2^{2^X}.F(f)): [2^4 X \to 2^{2^X}]
\end{eqnarray*}

We briefly recall the Eilenberg-Moore categories of the monads (see Definition~\ref{def:monadalgebra}). The objects (which we call \emph{monad algebras}) consist of a pair $(A,\alpha)$, with $\alpha\colon 2^{2^A}\to A$, such that $\alpha \circ \eta_A$ is the identity function on $A$, and $\alpha \circ \mu_A = \alpha \circ 2^{2^\alpha}$ . By a direct calculation, this requirement translates into the following two equations:
\begin{itemize}
\item
$\alpha(\lambda f.f(a)) = a$ for all $a\in A$, and
\item
$\alpha(\lambda f.\IF(\lambda F.F(f))) = \alpha(\lambda f.\IF(\lambda F.f(\alpha(F))))$ for all $\IF \in 2^4(A)$.
\end{itemize}
An \emph{algebra homomorphism} $h\colon (A,\alpha) \to (B,\beta)$ is a morphism $h\colon A\to B$ such that $h\circ \alpha = \beta\circ 2^{2^h}$. This translates into the equation:
\begin{itemize}
\item
$h(\alpha(F)) = \beta(\lambda g. F(\lambda a.g(h(a))))$ for all $F \in 2^{2^A}$.
\end{itemize}

In this section, we show that these monad algebras are precisely the Boolean algebras, in the respective categories. Since $\Alg(\CHQPol)=\Alg(\ZCHQPol)$ (Lemma~\ref{lem:algch=algzch}) and $\BA(\CHQPol)=\BA(\ZCHQPol)$ (Lemma~\ref{lem:bach=bazch}), we only state the results in this section for $\ZCHQPol$. For notational simplicity, we do not differentiate between the monad on $\CHQPol$ and its restriction to $\ZCHQPol$. This is not problematic because of Lemma~\ref{lem:DX_is_zerodim}. 

We will use the following notational convention throughout this section: $a,b,c: A$, $f,g: 2^A$, $F: 2^{2^A}$, $\calF: 2^3(A)$, $\IF: 2^4(A)$.

\subsection{From monad algebras to Boolean algebras}

We define computable functors $\calF_{\BA} \colon \Alg(\ODQPol) \to \BA(\ODQPol)$ and $\calG_{\BA} \colon \Alg(\ZCHQPol) \to \BA(\ZCHQPol)$ as follows. The construction is identical in both cases, so we present them together. Each algebra $(A,\alpha)$ is mapped to a Boolean algebra object $(A,\vee_\alpha,\wedge_\alpha,\neg_\alpha,0_\alpha,1_\alpha)$ by lifting the structure from the two element Boolean algebra $(2,\vee,\wedge,\neg,0,1)$ as follows:
\begin{itemize}
\item
$\vee_\alpha \colon A\times A\to A$ is defined as  $a \vee_\alpha b = \alpha(\lambda f.f(a)\vee f(b))$,
\item
$\wedge_\alpha \colon A\times A\to A$ is defined as  $a \wedge_\alpha b = \alpha(\lambda f.f(a)\wedge f(b))$,
\item
$\neg_\alpha \colon A\to A$ is defined as  $\neg_\alpha a = \alpha(\lambda f.\neg f(a))$,
\item
$0_\alpha \colon 1 \to A$ is defined as  $0_\alpha = \alpha(\lambda f. 0)$,
\item
$1_\alpha \colon 1 \to A$ is defined as $1_\alpha = \alpha(\lambda f.1)$.
\end{itemize}
Each algebra homomorphism $h\colon (A,\alpha) \to (B,\beta)$ is mapped to the  Boolean algebra homomorphism that has the same underlying morphism $h\colon A\to B$.

Given a Boolean algebra term $\varphi$ in the language of $(A,\vee_\alpha,\wedge_\alpha,\neg_\alpha,0_\alpha,1_\alpha)$  and a variable symbol $f$ of type $2^A$, define the term $\varphi_f$ as follows:
\begin{enumerate}
\item
If $\varphi = a$, where $a \in A$, then $\varphi_f = f(a)$.
\item
If $\varphi = \psi \vee_\alpha \theta$ then $\varphi_f = (\psi_f \vee \theta_f)$.
\item
If $\varphi = \psi \wedge_\alpha \theta$ then $\varphi_f = (\psi_f \wedge \theta_f)$.
\item
If $\varphi = \neg_\alpha \psi$ then $\varphi_f = (\neg \psi_f)$.
\item
If $\varphi = 0_\alpha$ then $\varphi_f = 0$. 
\item
If $\varphi = 1_\alpha$ then $\varphi_f = 1$. 
\end{enumerate}
Note that since all occurrences of $f$ in $\varphi_f$ are unbounded, if $g\colon 2^A$ is any other variable symbol then $(\lambda f.\varphi_f)(g) = \varphi_g$.

\begin{lemma}\label{lem:boolterm_regform}
Let $(A,\alpha)$ be an object of $\Alg(\ODQPol)$ or $\Alg(\ZCHQPol)$. If $\varphi$ is a Boolean algebra term in the language of $(A,\vee_\alpha,\wedge_\alpha,\neg_\alpha,0_\alpha,1_\alpha)$, then $\varphi = \alpha(\lambda f. \varphi_f)$ when evaluated in $A$.
\end{lemma}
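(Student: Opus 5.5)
Throughout, the only computational tools we need are the $\beta$-rule of the restricted $\lambda$-calculus and the two algebra laws $\alpha(\lambda f.f(a))=a$ and $\alpha(\lambda f.\IF(\lambda F.F(f)))=\alpha(\lambda f.\IF(\lambda F.f(\alpha(F))))$.

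The proof is by structural induction on the Boolean algebra term $\varphi$. In the base case $\varphi=a$ with $a\in A$, we have $\varphi_f=f(a)$, and $\alpha(\lambda f.f(a))=a$ is exactly the unit law $\alpha\circ\eta_A=\id(A)$. The constant cases are immediate from the definitions: $0_\alpha=\alpha(\lambda f.0)=\alpha(\lambda f.(0_\alpha)_f)$, and similarly for $1_\alpha$.

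For the inductive step, take for instance $\varphi=\psi\vee_\alpha\theta$; the cases $\wedge_\alpha$ and $\neg_\alpha$ are identical in form. By definition of $\vee_\alpha$,
\[\varphi=\alpha(\lambda f.f(\psi)\vee f(\theta)),\]
where $\psi,\theta$ now denote their values in $A$. By the induction hypothesis, $\psi=\alpha(\lambda g.\psi_g)$ and $\theta=\alpha(\lambda g.\theta_g)$, so
\[\varphi=\alpha\bigl(\lambda f.f(\alpha(\lambda g.\psi_g))\vee f(\alpha(\lambda g.\theta_g))\bigr).\]
The goal is to push $\alpha$ outward using the multiplication law. Define
\[\IF=\lambda\calF.\,\calF(\lambda g.\psi_g)\vee\calF(\lambda g.\theta_g)\;:\;2^4(A),\]
which is derivable in the restricted calculus by the same pattern as the sample derivation of $\hat{\wedge}$ in Section~\ref{sec:lambdacalculus}: apply a variable of type $2^3(A)$ to two closed-in-context terms of type $2^{2^A}$, then use (const-mor) for $\vee\colon 2\times 2\to 2$, (pair), and the (dual) rules. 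Then $\beta$-reduction gives
\[\lambda f.\IF(\lambda F.f(\alpha(F)))=\lambda f.\,f(\alpha(\lambda g.\psi_g))\vee f(\alpha(\lambda g.\theta_g)),\]
and
\[\lambda f.\IF(\lambda F.F(f))=\lambda f.\,(\lambda g.\psi_g)(f)\vee(\lambda g.\theta_g)(f)=\lambda f.\,\psi_f\vee\theta_f,\]
where the last step uses the observation after the definition of $\varphi_f$. Applying the multiplication law yields $\varphi=\alpha(\lambda f.\psi_f\vee\theta_f)=\alpha(\lambda f.\varphi_f)$. For negation, take $\IF=\lambda\calF.\neg\calF(\lambda g.\psi_g)$ instead.

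The main obstacle is not the algebra but justifying that these $\lambda$-manipulations are legitimate. First, each intermediate term, in particular $\IF$ and the two bodies above, must be well-typed in the restricted calculus with the correct sort alternation. Second, the equations used must hold in the interpretation. For this we rely on the soundness argument of Section~\ref{sec:lambdacalculus}: terms are interpreted in $\QPol$ via $\calF$ and $\calG$, where $\beta$-equality is sound and the interpretation is independent of the derivation. Third, we need that the algebra laws, stated as equations of morphisms in Definition~\ref{def:monadalgebra}, translate into the pointwise equations listed above, as the paper asserts. Once this is accepted, the induction goes through uniformly for $\Alg(\ODQPol)$ and $\Alg(\ZCHQPol)$.
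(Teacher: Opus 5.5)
Your proposal is correct and follows the paper's proof essentially verbatim: structural induction on $\varphi$, using the unit law for $\varphi=a$ and the definitions for $0_\alpha,1_\alpha$. For $\vee_\alpha$ (and likewise $\wedge_\alpha$, $\neg_\alpha$) you use the same auxiliary $\IF=\lambda\calF.\calF(\lambda g.\psi_g)\vee\calF(\lambda g.\theta_g)$ fed into the multiplication law, and your extra remarks on well-typedness and on soundness of $\beta$-reduction via the $\QPol$ interpretation only make explicit what the paper leaves implicit.
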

\begin{proof}
By induction on the structure of $\varphi$. The cases for $\varphi = 0_\alpha$ and $\varphi = 1_\alpha$ hold by definition. The case for $\varphi = a$ holds because $\alpha(\lambda f.f(a)) = a$ by definition of a structure map.

Assume $\varphi = \psi \vee_\alpha \theta $ and $\psi = \alpha(\lambda f. \psi_f)$ and $\theta = \alpha(\lambda f. \theta_f)$. Define
\[\IF = \lambda \calF.\calF(\lambda f.\psi_f) \vee \calF(\lambda f. \theta_f).\]
Then
\begin{eqnarray*}
\varphi &=& \psi \vee_\alpha \theta \\
&=& \alpha(\lambda g. g(\psi) \vee g(\theta))\\
&=& \alpha(\lambda g. g(\alpha(\lambda f. \psi_f)) \vee g(\alpha(\lambda f. \theta_f)))\\
&=& \alpha(\lambda g. \IF(\lambda F.g(\alpha(F))))\\
&=& \alpha(\lambda g. \IF(\lambda F.F(g)))\\
&=& \alpha(\lambda g. (\lambda f.\psi_f)(g) \vee (\lambda f.\theta_f)(g))\\
&=& \alpha(\lambda g. \psi_g \vee \theta_g)\\
&=& \alpha(\lambda g. \varphi_g),
\end{eqnarray*}
hence a change of variables yields $\varphi =  \alpha(\lambda f. \varphi_f)$. The case for $\varphi = \psi \wedge_\alpha \theta$ is essentially the same, and the case for $\varphi = \neg \psi$ is obtained using $\IF = \lambda \calF.\neg \calF(\lambda f.\psi_f)$.
\qed
\end{proof}

\begin{lemma}
The functors $\calF_{\BA} \colon \Alg(\ODQPol) \to \BA(\ODQPol)$ and $\calG_{\BA} \colon \Alg(\ZCHQPol) \to \BA(\ZCHQPol)$ are well-defined and computable.
\end{lemma}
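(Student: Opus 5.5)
To show the functors are well-defined, two things must be checked. First, for each monad algebra $(A,\alpha)$ the tuple $(A,\vee_\alpha,\wedge_\alpha,\neg_\alpha,0_\alpha,1_\alpha)$ is a Boolean algebra object. Second, every algebra homomorphism $h\colon(A,\alpha)\to(B,\beta)$ is a Boolean algebra homomorphism. The identity and composition laws are then automatic, since both categories inherit them from the base category. Computability comes from the restricted $\lambda$-calculus. Each operation is the interpretation of a closed $\lambda$-term built from the constants $\alpha$ and the operations of $2$, together with the sample derivation of $\hat{\wedge}\colon A\times A\to 2^{2^A}$. For example, $\wedge_\alpha=\alpha\circ\hat{\wedge}$, and analogous terms give $\hat{\vee}$, $\hat{\neg}$, $\hat 0$ and $\hat 1$. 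The interpretation of such a term is computable uniformly in the parameter $\alpha$. So the map $(A,\alpha)\mapsto(A,\vee_\alpha,\dots)$ is a computable function $\Alg(\calC)_\Obj\to\calC_\Obj\times(\calC_\Mor)^5$, and on morphisms the functor is just a projection.

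For the Boolean algebra axioms I would use Lemma~\ref{lem:boolterm_regform}. Each axiom has the form $\varphi=\psi$ for Boolean terms in variables ranging over $A$. By the lemma, $\varphi=\alpha(\lambda f.\varphi_f)$ and $\psi=\alpha(\lambda f.\psi_f)$. The axiom holds in the two-element Boolean algebra $2$, so $\varphi_f=\psi_f$ for every $f\colon 2^A$, since $\varphi_f$ and $\psi_f$ are the same Boolean expressions evaluated in $2$ at the values $f(a)\in 2$. By $\eta$-equivalence $\lambda f.\varphi_f=\lambda f.\psi_f$, so the two sides agree in $A$. This uses that equality of morphisms into $2$ can be checked pointwise, which is justified by the soundness of the interpretation via $\QPol$, since $\QPol$ is well-pointed.

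The step that needs the most care is the translation from pointwise equations to commutative diagrams. The Boolean algebra axioms in $\BA(\calC)$ are commutative diagrams of morphisms such as $A\times A\times A\to A$. Lemma~\ref{lem:boolterm_regform}, however, is phrased for elements of $A$. I would handle this by working in the $\lambda$-calculus with variables: for a context $a,b,c:A$, the two sides of an axiom are terms $\Gamma\vdash\varphi:A$. By the argument above these terms are equal for all valid inputs. Currying them gives equal morphisms $A^3\to A$, because $\calF$ and $\calG$ are faithful and points of the product determine morphisms.

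For morphisms, take an algebra homomorphism $h$, so that $h(\alpha(F))=\beta(\lambda g.F(\lambda a.g(h(a))))$. Then
\[h(a\wedge_\alpha b)=h(\alpha(\lambda f.f(a)\wedge f(b)))=\beta(\lambda g.g(h(a))\wedge g(h(b)))=h(a)\wedge_\beta h(b),\]
after $\beta$-reduction. The same computation handles $\vee$, $\neg$, $0$ and $1$; for the constants the inner term does not depend on $f$. Hence $h$ lies in $\BA(\calC)_\Mor$, and the functor on morphisms is the computable map $(h,(A,\alpha),(B,\beta))\mapsto(h,\calF_{\BA}(A,\alpha),\calF_{\BA}(B,\beta))$.
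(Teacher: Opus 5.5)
Your proposal is correct and follows essentially the same route as the paper: the Boolean algebra axioms are lifted from $2$ via Lemma~\ref{lem:boolterm_regform}, and algebra homomorphisms are shown to preserve the operations by the same $\beta$-reduction computation using $h\circ\alpha=\beta\circ 2^{2^h}$. Your extra remarks on computability via $\wedge_\alpha=\alpha\circ\hat{\wedge}$, and on passing from elementwise equations to commutative diagrams via faithfulness and well-pointedness, only make explicit what the paper leaves as ``clear''.
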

\begin{proof}
Let $(A,\alpha)$ be an object of either $\Alg(\ODQPol)$ or $\Alg(\ZCHQPol)$. We first check that $(A,\vee_\alpha,\wedge_\alpha,\neg_\alpha,0_\alpha,1_\alpha)$ is a Boolean algebra object. This is done by using Lemma~\ref{lem:boolterm_regform} to show that the corresponding axioms for $2$ lift to the whole algebra. For example, the distributivity axiom holds because
\begin{eqnarray*}
a \wedge_\alpha (b \vee_\alpha c) &=&  \alpha(\lambda g. g(a)\wedge (g(b)\vee g(c)))\\
&=& \alpha(\lambda g.(g(a)\wedge g(b))\vee (g(a) \wedge g(c))) \\
&=& (a \wedge_\alpha b)\vee_\alpha (a \wedge_\alpha c),
\end{eqnarray*}
where the first and third equalities are Lemma~\ref{lem:boolterm_regform}, and the second equality follows from the distributivity axiom for  $(2,\vee,\wedge,\neg,0,1)$. The other axioms can be handled similarly. It follows that $(A,\vee_\alpha,\wedge_\alpha,\neg_\alpha,0_\alpha,1_\alpha)$ is a Boolean algebra object, and it is clear that it can be computed given $(A,\alpha)$.

Next we show that algebra homomorphisms are Boolean algebra homomorphisms. Assume $(A,\alpha)$ and $(B,\beta)$ are algebras and $h\colon A\to B$ is an algebra homomorphism. One can directly verify that $h$ preserves the Boolean algebra structure. For example, the preservation of joins is shown as follows:
\begin{eqnarray*}
h(a \vee_\alpha b) &=& h(\alpha(\lambda f.f(a) \vee f(b)))\\
&=& \beta(2^{2^h}(\lambda f.f(a) \vee f(b)))\\
&=& \beta(\lambda g. (\lambda f.f(a) \vee f(b))(\lambda x.g(h(x))))\\
&=& \beta(\lambda g. g(h(a)) \vee g(h(b)))\\
&=& h(a) \vee_\beta h(b).
\end{eqnarray*}
The other cases are similar and left to the reader. 
\qed
\end{proof}

\begin{lemma}\label{lem:induced_ultrafilters_are_points}
If $(A, \alpha)$ is an algebra (of either $\Alg(\ODQPol)$ or $\Alg(\ZCHQPol)$), and $h\colon A\to 2$ is a (continuous) Boolean algebra homomorphism with respect to $(A,\vee_\alpha,\wedge_\alpha,\neg_\alpha,0_\alpha,1_\alpha)$, and $F\colon 2^A\to 2$ is a morphism, then $h(\alpha(F))=F(h)$.
\end{lemma}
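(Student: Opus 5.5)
The plan is to reduce to Lemma~\ref{lem:boolterm_regform}: on morphisms $F$ of the form $\lambda f.\varphi_f$, for a Boolean algebra term $\varphi$ over elements of $A$, the identity is immediate. The general case then follows by density or finiteness. Concretely, if $F=\lambda f.\varphi_f$, then Lemma~\ref{lem:boolterm_regform} gives $\alpha(F)=\varphi$ evaluated in $(A,\vee_\alpha,\wedge_\alpha,\neg_\alpha,0_\alpha,1_\alpha)$. Since $h$ is a Boolean algebra homomorphism, an induction on $\varphi$ shows $h(\varphi)=\varphi_h$, where $\varphi_h$ is $\varphi_f$ with $h$ substituted for $f$, evaluated in $2$. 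By the remark after the definition of $\varphi_f$, $\varphi_h=(\lambda f.\varphi_f)(h)=F(h)$. So $h(\alpha(F))=F(h)$ for all such ``term-definable'' $F$. I will work with the spaces from Theorems~\ref{thrm:ODQPol} and \ref{thrm:CHQPol}, reading $h$ as a point of $2^A$ via the (dual-mor) correspondence.

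Next I pass to arbitrary $F\colon 2^A\to 2$. Consider the two maps $2^{2^A}\to 2$ given by $F\mapsto h(\alpha(F))$ and $F\mapsto F(h)$. The first is continuous as a composite of $\alpha$ and $h$. The second is evaluation at the point $h$, which is continuous; it is essentially the map interpreted by $\lambda F.F(h)$, i.e.\ the $\eta$/$\epsilon$ construction applied to a constant. Since $2$ is Hausdorff, it suffices to show that the term-definable $F$ are dense in $2^{2^A}$.

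Density is where the two cases differ. If $A$ is in $\ODQPol$, then $2^A$ is compact zero-dimensional and $F$ corresponds to a clopen subset of $2^A$. The compact-open topology on $2^A$ has subbasis $\{f\mid f(a)=i\}$ (compact subsets of discrete $A$ are finite), so the clopen $F^{-1}(1)$ is a finite union of finite intersections of these. This makes every $F$ literally of the form $\lambda f.\varphi_f$, and no density argument is needed. If $A$ is in $\ZCHQPol$, then $2^A$ is discrete and $2^{2^A}$ has basic opens fixing the values of $F$ at finitely many distinct $f_1,\dots,f_n\in 2^A$. Any two of these differ at some point of $A$, so there are finitely many $a_1,\dots,a_m\in A$ whose evaluation vectors $(f_i(a_1),\dots,f_i(a_m))$ are pairwise distinct. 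Every prescribed assignment of values to the $f_i$ then factors through a Boolean function of these vectors, and hence is realized by some $\lambda f.\varphi_f$ with $\varphi$ built from $a_1,\dots,a_m$.

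I expect the main obstacle to be justifying rigorously, within the encodings, that evaluation at $h$ and the identification of $2^{2^A}$ with the compact-open function space behave as claimed. The paper's discussion after Definition~\ref{def:functor_D}, and the soundness of the interpretation $\llbracket\cdot\rrbracket'$ in $\QPol$, should supply exactly this.
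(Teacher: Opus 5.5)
Your proposal is correct and takes essentially the same route as the paper. You reduce to term-definable $F=\lambda f.\varphi_f$ via Lemma~\ref{lem:boolterm_regform} and the induction $h(\varphi)=\varphi_h$. In the $\ODQPol$ case every $F$ is such a finite disjunctive normal form. In the $\ZCHQPol$ case $F$ is matched on finitely many points of the discrete space $2^A$, which are separated by finitely many elements of $A$. Your ``density plus continuity of evaluation at $h$'' formulation is just a repackaging of the paper's step: it chooses $f_0,\ldots,f_n$ from the continuity of $h\circ\alpha$ and includes $f_0=h$ among them.
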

\begin{proof}
Let $h\colon A\to 2$ be a Boolean algebra homomorphism and $F\colon 2^A\to 2$. 

We first consider the case where $A$ is in $\ODQPol$. If $\varphi$ is a Boolean algebra term in the language of $(A,\vee_\alpha,\wedge_\alpha,\neg_\alpha,0_\alpha,1_\alpha)$, then $h(\varphi)= \varphi_h$ by an easy induction. Since $F$ is continuous, there exist basic clopen subsets $C_1,\ldots, C_n$ of $2^A$ such that $F(g)=1$ if and only if $g\in \bigcup_{i\leq n} C_i$. Using the fact that $A$ is discrete and $2^A$ has the compact-open topology, we can assume
\[C_i = \{ g\in 2^A \mid (\forall a\in F^0_i)\, g(a)=0 \text{ and } (\forall a\in F^1_i)\, g(a)=1\}\]
for some finite $F^0_i,F^1_i \subseteq A$. Set
\[\varphi=\bigvee_{1\leq i \leq n} \left(\bigwedge_{a\in F^0_i} \neg a \wedge \bigwedge_{a\in F^1_i} a \right),\]
where $\bigvee$ and $\bigwedge$ are the Boolean operations in $(A,\vee_\alpha,\wedge_\alpha,\neg_\alpha,0_\alpha,1_\alpha)$. Then $F = \lambda f.\varphi_f$, hence using Lemma~\ref{lem:boolterm_regform} we obtain $h(\alpha(F)) = h(\alpha(\lambda f.\varphi_f)) = h(\varphi) = \varphi_h = F(h)$.

Next, we consider the case where $A$ is in $\ZCHQPol$. The proof is similar to the case for $\ODQPol$, but we must go into more detail about the construction of the term $\varphi$. From continuity of $h\circ\alpha$ and the fact that $2^A$ is discrete, there are $f_0,\ldots, f_n\in 2^A$ such that if $G\colon 2^A\to 2$ and $G(f_i)=F(f_i)$ for all $i\leq n$ then $h(\alpha(G))=h(\alpha(F))$. We assume $f_0=h$ and that they are pairwise distinct, so there is $a_{i,j}$ ($i,j\leq n$) such that $f_i(a_{i,j}) \not= f_j(a_{i,j})$ whenever $i\not=j$. For each $i\leq n$, let $\theta_i$ be the $(A,\vee_\alpha,\wedge_\alpha,\neg_\alpha,0_\alpha,1_\alpha)$-Boolean algebra term 
\[ \theta_i = (\widehat{a_{i,0}} \wedge_{\alpha} \cdots \wedge_{\alpha} \widehat{a_{i,n}})\wedge_{\alpha} (F(f_i))_{\alpha}\]
where
\[\widehat{a_{i,j}} = \left\{\begin{array}{ll} a_{i,j} & \text{ if $f_i(a_{i,j})=1$}\\ \neg_{\alpha} a_{i,j} & \text{ if $f_i(a_{i,j})=0$} \end{array}\right.\]
and
\[ (F(f_i))_{\alpha} = \left\{\begin{array}{ll} 1_\alpha & \text{ if $F(f_i)=1$}\\ 0_{\alpha} & \text{ if $F(f_i)=0$} \end{array}\right.\]
Finally, set
\[\varphi = \theta_0 \vee_\alpha \cdots \vee_\alpha \theta_n.\]

Then $(\lambda g.\varphi_g)(f_i) = F(f_i)$ for each $i\leq n$, because the terms corresponding to $\theta_j$ will evaluate to $0$ for $j\not= i$, and $\theta_i$ will evaluate to $F(f_i)$. Therefore, $h(\alpha(F)) = h(\alpha( \lambda g.\varphi_g))$. Using Lemma~\ref{lem:boolterm_regform} and the assumption that $h$ is a Boolean algebra homomorphism, we have
\begin{eqnarray*}
h(\alpha(F)) &=& h(\alpha( \lambda g.\varphi_g))\\
&=& h(\varphi)\\
&=& \varphi_h\\
&=& (\lambda g.\varphi_g)(h)\\
&=& F(h),
\end{eqnarray*}
where the last equality holds because $h=f_0$.
\qed
\end{proof}

\subsection{From Boolean algebras to monad algebras}

The key lemma is the following.

\begin{lemma}\label{lem:boolhoms_are_points}
Given a Boolean algebra object $(A, \vee, \wedge,\neg,0,1)$ in $\ODQPol$ or $\ZCHQPol$, one can compute a morphism $\alpha\colon 2^{2^A}\to A$ such that $h(\alpha(F))=F(h)$ for every continuous Boolean algebra homomorphism $h\colon A\to 2$ and continuous function $F\colon 2^A\to 2$.
\qed
\end{lemma}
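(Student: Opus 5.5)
The plan is to construct $\alpha$ by interpreting a continuous $F\colon 2^A\to 2$ as a Boolean combination of elements of $A$. Theorem~\ref{thm:enuf_homs} will give well-definedness in both cases, and the proof of Lemma~\ref{lem:induced_ultrafilters_are_points} indicates how to extract a Boolean term from $F$. The two categories are handled separately: in $\ODQPol$ finite Boolean terms suffice, while in $\ZCHQPol$ we need the infinitary joins and meets of Lemmas~\ref{lem:zch_alg_seq_complete} and \ref{lem:zch_alg_complete}.

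\emph{Case $A$ in $\ODQPol$.} Here $2^A=\calD(A)$ is compact Hausdorff and $2^{2^A}=\calE\calD(A)$. A point of $2^{2^A}$ is an index $b$ with $b\equiv'' b$. By compactness, $F=\lambda f.\Phi_b(f)$ agrees with $\lambda f.\varphi_f$ for some finite Boolean term $\varphi$ over elements of $A$, as in the proof of Lemma~\ref{lem:induced_ultrafilters_are_points}. We define the graph of $\alpha$ by letting $G(b,c)$ hold iff there is a term $\varphi$ (with constants coded by natural numbers) such that
\begin{itemize}
\item $(\forall x\in 2^\IN)[x\equiv' x\Rightarrow \Phi_b(x)=\varphi_x]$, and
\item $c$ lies in the class of the value of $\varphi$ in $A$.
\end{itemize}
The first clause is open, since it is a universal quantification over the compact space $2^\IN$ of an open predicate. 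The second clause is semidecidable because the Boolean operations are computable morphisms of $\ODQPol$, so their graphs are enumerable. This makes $G$ a computable element of $\PL(\IN\times\IN)$, uniformly in the Boolean algebra object.

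Axioms 1--3 and 5 of $\ODQPol_\Mor$ are immediate. For axiom 4, suppose two terms $\varphi,\psi$ both represent $F$ but have distinct values. Theorem~\ref{thm:enuf_homs} then gives a homomorphism $h$ with $h(\varphi)\neq h(\psi)$. But $h(\varphi)=\varphi_h=F(h)=\psi_h=h(\psi)$, a contradiction. The same computation shows $h(\alpha(F))=F(h)$.

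\emph{Case $A$ in $\ZCHQPol$.} Now $2^A$ is overt discrete and its elements are clopen subsets $f$ of $A$, so $F$ is just a subset of a countable set. For each $f\in 2^A$, let $m_f=\bigwedge C_f$ with $C_f=f^{-1}(1)\cup\neg(f^{-1}(0))$. This set is clopen by continuity of $\neg$, so Lemma~\ref{lem:zch_alg_complete} supplies the meet. Then set
\[\alpha(F)=\bigvee\{m_f\mid F(f)=1\},\]
a countable join, which exists by Lemma~\ref{lem:zch_alg_seq_complete}.

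For a continuous homomorphism $h$, the set $h^{-1}(1)$ is a clopen filter, hence an open upper set. The finite-witness clauses of Lemmas~\ref{lem:zch_alg_seq_complete} and \ref{lem:zch_alg_complete} therefore apply. First, $h(m_f)=1$ iff $h$ is $1$ on all of $C_f$, which holds iff $h=f$. Second, $h(\alpha(F))=1$ iff $h(m_f)=1$ for some $f$ with $F(f)=1$, which holds iff $F(h)=1$.

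The main obstacle is showing that $F\mapsto\alpha(F)$ is a continuous and computable morphism $\calD\calE(A)\to A$ in $\CHQPol$, uniformly in $A$, rather than merely a well-defined function. I would first try the following. For a basic open upper set $U$, the finite-witness characterizations make ``$\alpha(F)\in U$'' semidecidable from $F$. The same holds for open lower sets, using complements and $\neg$. I would then reduce an arbitrary open set to these by zero-dimensionality: a clopen set is a finite Boolean combination, computed via the clopen sets $U^+$ and the lemma producing $y$ with $x\in int(\uparw y)$. If this reduction fails, the fallback is to define the graph of $\alpha$ as a compact relation in $\PU(2^\IN\times 2^\IN)$, check the five axioms of $\CHQPol_\Mor$ using Theorem~\ref{thm:enuf_homs}, and rely on compactness of both sides to get continuity automatically.
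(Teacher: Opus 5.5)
\textbf{The $\ODQPol$ case and the definition of $\alpha$ are fine.} Your $\ODQPol$ case is essentially the paper's argument: you find a finite normal form for $F$ by search, using that the agreement test is a universal statement over the compact $2^A$, and you get single-valuedness from enough homomorphisms. In the $\ZCHQPol$ case, your formula $\alpha(F)=\bigvee\{m_f\mid F(f)=1\}$ is exactly the paper's. Your direct check that $h(m_f)=1$ iff $f=h$ is correct and slightly cleaner than the paper's route through $c=\bigwedge h^{-1}(1)$. One correction: the step ``$h$ is $1$ on $C_f$ implies $h(\bigwedge C_f)=1$'' uses the meet clause of Lemma~\ref{lem:zch_alg_complete} for the open \emph{lower} set $h^{-1}(0)$, not for $h^{-1}(1)$.

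\textbf{The gap is computability of $\alpha$ in the $\ZCHQPol$ case.} You never show that $\alpha$ is a computable, or even continuous, morphism of $\ZCHQPol$. That is the actual content of ``one can compute''.

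Your first plan fails at its first step. Lemma~\ref{lem:zch_alg_seq_complete} reduces $\alpha(F)\in U$, for an open upper set $U$, to some $m_{f_0}\vee\cdots\vee m_{f_n}\in U$. But each $m_f$ is an infinite meet, and Lemma~\ref{lem:zch_alg_complete} supplies finite witnesses for $\bigwedge C\in U$ only when $U$ is an open \emph{lower} set. So nothing you cite makes this membership semi-decidable. The later reduction of arbitrary open sets to upper and lower ones is only asserted.

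Your fallback names the right target, a compact graph in $2^\IN\times 2^\IN$. However, it omits the one thing that must be supplied: a way to enumerate the complement of that graph.

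\textbf{The missing idea (the paper's route).} Combine the identity $h(\alpha(F))=F(h)$, which you already proved, with Lemma~\ref{lem:zchs_enuf_homs}. This gives: $\alpha(F)\neq a$ iff there is a continuous Boolean algebra homomorphism $h\colon A\to 2$ with $F(h)\neq h(a)$.
\begin{itemize}
\item The homomorphisms form an enumerable subset of the overt discrete space $2^A$. Each homomorphism equation for $h$ is a universal quantification, over the compact space $A$ or $A\times A$, of a clopen condition, hence open in $h$.
\item Once $h$ is found, $F(h)\neq h(a)$ is decidable.
\end{itemize}
So the complement of the graph of $\alpha$ is c.e.\ uniformly in the Boolean algebra object; that is, the graph is a computable point of the upper powerspace of $2^\IN\times 2^\IN$. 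Add single-valuedness (from enough homomorphisms) and totality (from your explicit construction of $\alpha(F)$). This yields the required computable morphism, and continuity comes for free from the closed graph.
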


We will prove the lemma separately for $\ODQPol$ and $\ZCHQPol$ later, but first we show how to use the above lemma to define computable functors $\calF_{\Alg} \colon \BA(\ODQPol)\to \Alg(\ODQPol)$ and $\calG_{\Alg} \colon \BA(\ZCHQPol)\to \Alg(\ZCHQPol)$. The definition of the functors is the same for both $\ODQPol$ and $\ZCHQPol$: Each Boolean algebra object $(A, \vee, \wedge,\neg,0,1)$ is mapped to $(A,\alpha)$, where $\alpha\colon 2^{2^A}\to A$ is as in Lemma~\ref{lem:boolhoms_are_points}. Each Boolean algebra homomorphism $h\colon A\to B$ is mapped to itself. 

\begin{lemma}
The functors  $\calF_{\Alg} \colon \BA(\ODQPol)\to \Alg(\ODQPol)$ and $\calG_{\Alg} \colon \BA(\ZCHQPol)\to \Alg(\ZCHQPol)$ are well-defined and computable.
\end{lemma}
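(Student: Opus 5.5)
The plan is to check three things: that $(A,\alpha)$ satisfies the two monad algebra equations; that every Boolean algebra homomorphism $h\colon A\to B$ is an algebra homomorphism; and that the construction is computable. Computability is immediate, since $\alpha$ is computed from the Boolean algebra object by Lemma~\ref{lem:boolhoms_are_points} and morphisms are sent to themselves. Every equation will be proved by the same device. To show two elements $c,d\in A$ are equal, it suffices by Theorem~\ref{thm:enuf_homs} to show $h(c)=h(d)$ for every continuous Boolean algebra homomorphism $h\colon A\to 2$. The key identity $h(\alpha(F))=F(h)$ from Lemma~\ref{lem:boolhoms_are_points} then lets us evaluate both sides.

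For the unit law, I take an arbitrary homomorphism $h$ and compute
\[h(\alpha(\lambda f.f(a)))=(\lambda f.f(a))(h)=h(a).\]
Theorem~\ref{thm:enuf_homs} then gives $\alpha(\lambda f.f(a))=a$.

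For the multiplication law, let $\IF\in 2^4(A)$. Applying $h$ to the left side gives
\[h(\alpha(\lambda f.\IF(\lambda F.F(f))))=\IF(\lambda F.F(h)),\]
and applying $h$ to the right side gives
\[h(\alpha(\lambda f.\IF(\lambda F.f(\alpha(F)))))=\IF(\lambda F.h(\alpha(F))).\]
By the key identity, $\lambda F.h(\alpha(F))$ and $\lambda F.F(h)$ agree at every $F\in 2^{2^A}$. Since morphisms in these categories are determined by their values on points (the functors $\calF$, $\calG$ are faithful and the categories are well-pointed), they are equal as elements of $2^3(A)$. Hence the two sides agree under every $h$, and so they are equal.

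For morphisms, let $h\colon A\to B$ be a Boolean algebra homomorphism and $F\in 2^{2^A}$. We must show
\[h(\alpha(F))=\beta(\lambda g.F(\lambda a.g(h(a)))).\]
Take any continuous Boolean homomorphism $k\colon B\to 2$. Then $k\circ h\colon A\to 2$ is again a continuous Boolean homomorphism, so the key identity for $A$ gives $k(h(\alpha(F)))=F(k\circ h)$. The key identity for $B$ gives $k(\beta(\lambda g.F(\lambda a.g(h(a)))))=F(\lambda a.k(h(a)))=F(k\circ h)$. Theorem~\ref{thm:enuf_homs} applied in $B$ then yields the equation. Functoriality is trivial because composition and identities are inherited from the underlying category. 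The resulting pair $(A,\alpha)$ is an object of $\Alg(\ODQPol)$ or $\Alg(\ZCHQPol)$ as required; I am not yet sure whether the $\lpi 2$ presentation of these spaces needs anything beyond pointwise truth of the equations.

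I expect the main obstacle to be justifying the pointwise reasoning in the lambda notation. The arguments $F$ and the elements of $2^3(A)$ have to be read as genuine points of the corresponding spaces, and the $\beta$-reductions such as $(\lambda f.\IF(\ldots))(h)$ have to be legitimate. Both follow from the soundness of the interpretation $\llbracket\cdot\rrbracket'$ into $\QPol$, where terms become honest continuous functions. I would invoke that soundness explicitly, together with the fact that $\calF,\calG$ are full and faithful, at each step where equality in $2^{2^A}$ or $2^3(A)$ is deduced from pointwise equality.
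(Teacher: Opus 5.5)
Your proposal is correct and follows the paper's proof essentially step for step. The unit law, the multiplication law and the homomorphism condition are each checked by applying an arbitrary continuous Boolean homomorphism, evaluating with $h(\alpha(F))=F(h)$, and separating points via Theorem~\ref{thm:enuf_homs}.

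The one step the paper adds, and which you should include, is that $\alpha$ is uniquely determined by this identity: if $\alpha'$ also satisfies it, then $h(\alpha(F))=F(h)=h(\alpha'(F))$ for all $h$, so $\alpha=\alpha'$. This is what makes the object map of $\calF_{\Alg}$ and $\calG_{\Alg}$ single-valued, regardless of choices made in computing $\alpha$ (e.g.\ the search for the sequence $\langle C_i,D_i\rangle$).

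Your concern about the $\lpi 2$ presentation is unnecessary. Membership in $\Alg(\calC)_\Obj$ is by definition just the truth of the equations.
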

\begin{proof}
The functors are computable because $\alpha$ can be computed from $A$, so we only need to show they are well-defined.

We first show that the $\alpha$ in Lemma~\ref{lem:boolhoms_are_points} is uniquely determined. Assume $\alpha'\colon 2^{2^A}\to A$ also satisfies the lemma. Then $h(\alpha(F))=F(h)=h(\alpha'(F))$ for each continuous $F\colon 2^A\to 2$ and continuous Boolean algebra homomorphism $h\colon A\to 2$. Therefore, $\alpha(F)=\alpha'(F)$ because they can not be separated by a continuous Boolean algebra homomorphism.

Next, to show that $\alpha$ is a structure map we must prove it satisfies the following:
\begin{enumerate}
\item
$\alpha(\lambda f.f(a)) = a$,
\item
$\alpha(\lambda f.\IF(\lambda F.F(f))) = \alpha(\lambda f.\IF(\lambda F.f(\alpha(F))))$.
\end{enumerate}
For the first equation, if $h\colon A\to 2$ is a continuous Boolean algebra homomorphism, then the assumption on $\alpha$ implies $h(\alpha(\lambda f.f(a))) = (\lambda f.f(a))(h) = h(a)$. Therefore, $\alpha(\lambda f.f(a)) = a$ because they can not be separated by a continuous Boolean algebra homomorphism.  Similarly, the second equation holds because if $h\colon A\to 2$ is a continuous Boolean algebra homomorphism then
\begin{eqnarray*}
 h(\alpha(\lambda f.\IF(\lambda F.F(f)))) &=& (\lambda f.\IF(\lambda F.F(f)))(h)\\
&=& \IF(\lambda F.F(h))\\
&=& \IF(\lambda F.h(\alpha(F)))\\
&=& (\lambda f.\IF(\lambda F.f(\alpha(F))))(h)\\
&=& h(\alpha(\lambda f.\IF(\lambda F.f(\alpha(F))))).
\end{eqnarray*}
Therefore, $\alpha$ is a structure map.

Finally, assume $h\colon A\to B$ is a Boolean algebra homomorphism. To prove that $h\colon (A,\alpha)\to (B,\beta)$ is an algebra homomorphism, we must show that $h\circ \alpha = \beta \circ 2^{2^h}$, where $2^{2^h}\colon 2^{2^A}\to 2^{2^B}$ is defined as $2^{2^h}(F)=\lambda g:2^B. F(\lambda a.g(h(a)))$.  

For $F\colon 2^{2^A}$ and any continuous Boolean algebra homomorphism $p\colon B\to 2$, we have
\begin{eqnarray*}
p(h(\alpha(F)) ) &=& F(p\circ h)\\
&=&F(\lambda a.p(h(a)))  \\
&=&(\lambda g. F(\lambda a.g(h(a))))(p)   \\
&=& p( \beta(\lambda g. F(\lambda a.g(h(a))))),
\end{eqnarray*}
hence $h(\alpha(F)) = \beta(\lambda g. F(\lambda a.g(h(a))))$. Therefore, $h\circ \alpha = \beta \circ 2^{2^h}$.
\qed
\end{proof}

We now move on to the proof of Lemma~\ref{lem:boolhoms_are_points}.

\subsubsection*{Proof of Lemma~\ref{lem:boolhoms_are_points} for $\ODQPol$}

Let $(A, \vee, \wedge,\neg,0,1)$ be a Boolean algebra object in $\ODQPol$. Define $\alpha\colon 2^{2^A} \to A$ as follows. Given $F\colon 2^A\to 2$, the continuity of $F$ and the compactness of $2^A$ implies there is  a sequence $\langle C_0, D_0\rangle, \ldots, \langle C_n, D_n\rangle$ of finite sets $C_i,D_i\subseteq A$ such that
\begin{equation}\label{eqn:repset}
F(f)=  \bigvee_{0\leq i \leq n}\left( \bigwedge_{c\in C_i} f(c) \wedge \bigwedge_{c\in D_i} \neg f(c) \right)\end{equation}
for each $f\colon A\to 2$, where the Boolean algebra operations are with respect to $(2,\vee,\wedge,\neg,0,1)$. Fix such a sequence and define
\[\alpha(F)= \bigvee_{0\leq i \leq n}\left( \bigwedge_{c\in C_i} c \wedge \bigwedge_{c\in D_i} \neg c \right),\]
where the Boolean algebra operations are with respect to $(A, \vee, \wedge,\neg,0,1)$.

If $h\colon A\to 2$ is a Boolean algebra homomorphism, then
\begin{eqnarray*}
h(\alpha(F)) &=& \bigvee_{0\leq i \leq n}\left( \bigwedge_{c\in A_i} h(c) \wedge \bigwedge_{c\in B_i} \neg h(c) \right)\\
&=& F(h).
\end{eqnarray*}

The definition of $\alpha$ is independent of the particular sequence $(\langle C_i, D_i\rangle)_{0\leq i\leq n}$  chosen as long as it satisfies (\ref{eqn:repset}), because if $(\langle C'_i, D'_i\rangle)_{0\leq i\leq m}$ is another such sequence and we set
\[\alpha'(F)= \bigvee_{0\leq i \leq m}\left( \bigwedge_{c\in C'_i} c \wedge \bigwedge_{c\in D'_i} \neg c \right),\]
then $h(\alpha(F))=F(h) = h(\alpha'(F))$ for every Boolean algebra homomorphism $h\colon A\to 2$, hence Lemma~\ref{lem:ods_enuf_homs} implies $\alpha(F)=\alpha'(F)$. 

A sequence $(\langle C_i, D_i\rangle)_{0\leq i\leq n}$  satisfying (\ref{eqn:repset}) can be found by an exhaustive search, because the finite subsets of $A$ can be enumerated, and verifying that (\ref{eqn:repset}) holds for all $f\colon 2\to A$ is semi-decidable by the compactness of $2^A$. It follows that $\alpha$ is computable. 

Therefore, Lemma~\ref{lem:boolhoms_are_points} holds for $\ODQPol$.

\subsubsection*{Proof of Lemma~\ref{lem:boolhoms_are_points} for $\ZCHQPol$}

Define $\odot\colon 2\times A\to A$ as $0\odot b = \neg b$ and $1 \odot b = b$. Given a Boolean algebra $(A, \vee, \wedge,\neg,0,1)$ in $\ZCHQPol$, define $\alpha\colon 2^{2^A}\to A$ as follows (for $F \colon 2^A\to 2$):
\[\alpha(F) = \bigvee_{\substack{f \in 2^A,\\ F(f)=1}}\bigwedge_{a \in A} (f(a) \odot a).\]
The meet is well-defined by Lemma~\ref{lem:zch_alg_complete}. The join is well-defined by Lemma~\ref{lem:zch_alg_seq_complete}, because $2^A$ is enumerable.

Fix $f\in 2^A$ and set
\[ b_f =  \bigwedge_{a\in A} (f(a)\odot a). \]
If $f(0)=1$ then $b_f \leq f(0)\odot 0 = 0$, and if $f(1)=0$ then $b_f \leq f(1)\odot 1 = 0$. If $f(a)=f(\neg a)$ then $b_f \leq a \wedge \neg a = 0$. If $f(a)=1$ and $f(b)=1$ and $f(a\wedge b)=0$, then $b_f \leq a\wedge b\wedge \neg(a\wedge b) = 0$. If $f(a)=0$ and $f(a\wedge b)=1$ then $b_f \leq \neg a \wedge (a\wedge b)=0$. If $f(a)=0$ and $f(b)=0$ and $f(a\vee b)=1$, then $b_f \leq \neg a\wedge \neg b\wedge (a\vee b) = 0$. If $f(a)=1$ and $f(a\vee b)=0$ then $b_f \leq a \wedge \neg(a\vee b)=0$. Therefore, if $f$ is not a Boolean algebra homomorphism then $b_f=0$.

Let $h\colon A\to 2$ be a continuous Boolean algebra homomorphism and $F\colon 2^A\to 2$. Set $c = \bigwedge h^{-1}(\{1\})$. Since $h$ is a Boolean algebra homomorphism, $h^{-1}(\{1\})$ is closed under finite meets, hence $h(c)=1$ by Lemma~\ref{lem:zch_alg_complete}. Thus for every $a\in A$, either $c \leq a$ (if $h(a)=1$), or else $a \leq \neg c$ (because $h(a)=0$ implies $h(\neg a)=1$). It follows that $h(\alpha(F)) = 1$ if and only if $c \leq \alpha(F)$ if and only if there is $f\in 2^A$ with $F(f) = 1$ and $c \leq \bigwedge_{a \in A} (f(a) \odot a)$ (because a join can be above $c$ if and only if one of the elements is not below $\neg c$). Since $c\not= 0$, $c \leq \bigwedge_{a \in A} (f(a) \odot a)$ if and only if $f$ is a Boolean algebra homomorphism satisfying $f(c)=1$ (otherwise the meet is below $f(c) \odot c = \neg c$), which holds if and only if $f=h$. Thus $h(\alpha(F))=1$ if and only if $F(h)=1$.

For any $F\colon 2^A\to 2$ and $a\in A$, Lemma~\ref{lem:zchs_enuf_homs} implies $\alpha(F)\not= a$ if and only if there is homomorphism $h$ such that $h(\alpha(F))\not=h(a)$, hence if and only if there is a Boolean algebra homomorphism $h$ such that $F(h)\not=h(a)$. Since the set of Boolean algebra homomorphisms $h\colon A\to 2$ can be enumerated, $\alpha(F)\not= a$ is semi-decideable. It follows that $\alpha$ is computable.

Therefore, Lemma~\ref{lem:boolhoms_are_points} holds for $\ZCHQPol$.

\subsection{Proof of equivalence}

We prove the following in this section.

\begin{theorem}
1. The categories $\Alg(\ODQPol)$ and $\BA(\ODQPol)$ are computably isomorphic via the computable functors $\calF_{\BA} \colon \Alg(\ODQPol) \to \BA(\ODQPol)$ and $\calF_{\Alg} \colon \BA(\ODQPol)\to \Alg(\ODQPol)$.

2. The categories $\Alg(\ZCHQPol)$ and $\BA(\ZCHQPol)$ are computably isomorphic via the computable functors $\calG_{\BA} \colon \Alg(\ZCHQPol) \to \BA(\ZCHQPol)$ and $\calG_{\Alg} \colon \BA(\ZCHQPol)\to \Alg(\ZCHQPol)$.
\end{theorem}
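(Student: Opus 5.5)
Both functors act as the identity on underlying morphisms, so on morphisms $\calF_{\Alg}\circ\calF_{\BA}$ and $\calF_{\BA}\circ\calF_{\Alg}$ are automatically identities once we know they are identities on objects. Since algebra homomorphisms and Boolean algebra homomorphisms are, by the preceding lemmas, mapped to each other with the same underlying morphism, it suffices to prove the two object-level identities
\[\calF_{\Alg}(\calF_{\BA}(A,\alpha)) = (A,\alpha),\qquad \calF_{\BA}(\calF_{\Alg}(A,\vee,\wedge,\neg,0,1)) = (A,\vee,\wedge,\neg,0,1),\]
and likewise for $\calG$. The arguments are uniform in the two cases, using Theorem~\ref{thm:enuf_homs} to separate points.

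For the first identity, start from a monad algebra $(A,\alpha)$ with induced Boolean structure $(A,\vee_\alpha,\ldots)$, and let $\alpha'$ be the structure map produced by Lemma~\ref{lem:boolhoms_are_points}. By Lemma~\ref{lem:induced_ultrafilters_are_points}, $h(\alpha(F))=F(h)$ for every continuous Boolean homomorphism $h\colon A\to 2$ with respect to the induced structure. Thus $\alpha$ satisfies the defining property of Lemma~\ref{lem:boolhoms_are_points}, and the uniqueness argument already given (via Theorem~\ref{thm:enuf_homs}) yields $\alpha=\alpha'$.

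For the second identity, start from a Boolean algebra object and take its $\alpha$ from Lemma~\ref{lem:boolhoms_are_points}. We must show $a\vee_\alpha b = a\vee b$, and similarly for $\wedge,\neg,0,1$. For any continuous Boolean homomorphism $h\colon A\to 2$ (with respect to the original structure) we compute
\[h(a\vee_\alpha b)=h(\alpha(\lambda f.f(a)\vee f(b))) = h(a)\vee h(b) = h(a\vee b),\]
and the remaining operations are handled the same way, e.g.\ $h(0_\alpha)=(\lambda f.0)(h)=0=h(0)$. Theorem~\ref{thm:enuf_homs} then gives equality pointwise. Equality of points of $\calC_\Mor$ follows from equality as functions, since the functors $\calF,\calG$ are faithful. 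Consequently the tuples in $\BA(\calC)_\Obj$ coincide.

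The main subtlety is bookkeeping rather than mathematics. Equality in $\calC_\Obj$ and $\calC_\Mor$ is equality of encodings, namely PERs and graphs. Because both functors preserve the underlying object $A$, and morphisms are determined by their graphs closed under the PERs (fullness and faithfulness from Theorems~\ref{thrm:ODQPol} and \ref{thrm:CHQPol}), pointwise equality of the interpreted functions implies equality of the encoded morphisms. Computability of the functors was established in the earlier lemmas, so the computable isomorphism follows.
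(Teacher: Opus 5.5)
Your proposal is correct and follows essentially the same route as the paper. You reduce to objects because both functors are the identity on underlying morphisms, obtain $\alpha=\alpha'$ from Lemma~\ref{lem:induced_ultrafilters_are_points}, Lemma~\ref{lem:boolhoms_are_points} and separation by Theorem~\ref{thm:enuf_homs}, and show the Boolean operations agree by computing $h(a\vee_\alpha b)=h(a)\vee h(b)$ for every homomorphism $h$; your remark that pointwise equality forces equality of the encoded morphisms (by faithfulness of $\calF$ and $\calG$) is a detail the paper leaves implicit.
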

\begin{proof}
The proof is the same for both $\ODQPol$ and $\ZCHQPol$. The functors between the categories do not change the underlying space, so we only need to check that the algebraic structure is unchanged.

First, start with $(A, \vee, \wedge,\neg,0,1)$, and apply $\calF_{\BA}$ (or $\calG_{\BA}$) to get $(A,\alpha)$, and then apply $\calF_{\Alg}$ (or $\calG_{\Alg}$) to get $(A, \vee', \wedge',\neg',0',1')$. Set $F = \lambda f.f(a)\vee f(b)$.  Then $a \vee' b = \alpha(F)$, and if $h\colon A\to 2$ is a Boolean homomorphism with respect to $(A, \vee, \wedge,\neg,0,1)$, then 
\begin{eqnarray*}
h(a\vee'b) &=& h(\alpha(F)) \text{ (definition of $\vee'$)}\\
&=& F(h) \text{ (Lemma~\ref{lem:boolhoms_are_points})}\\
&=& h(a)\vee h(b)\text{ (definition of $F$)}\\
&=& h(a\vee b).
\end{eqnarray*}
Therefore, $a\vee' b = a\vee b$, because they cannot be separated by a Boolean homomorphism. The other operations are handled similarly. Therefore,  $(A, \vee, \wedge,\neg,0,1)=(A, \vee', \wedge',\neg',0',1')$.

Next, start with $(A,\alpha)$,  and apply $\calF_{\BA}$ (or $\calG_{\BA}$) to get $(A, \vee, \wedge,\neg,0,1)$, and then apply $\calF_{\Alg}$ (or $\calG_{\Alg}$) to get $(A,\alpha')$. For any continuous $F\colon 2^A\to 2$, if $h\colon A\to 2$ is a continuous Boolean algebra homomorphism with respect to $(A, \vee, \wedge,\neg,0,1)$, then $h(\alpha(F)) = F(h) = h(\alpha'(F))$ (the first equality by Lemma~\ref {lem:induced_ultrafilters_are_points} and the second by Lemma~\ref{lem:boolhoms_are_points}). Therefore, $\alpha(F)=\alpha'(F)$ because they cannot be separated by a Boolean homomorphism. Therefore, $(A,\alpha)=(A,\alpha')$.
\qed
\end{proof}

\section{Computable Stone dualities}\label{sec:StoneDuality}

\subsection{Sobriety}

Our definition of sobriety is from Definition~4.7 of \cite{Taylor02}. Let $X$ be an object of $\ODQPol$ or $\CHQPol$. We say $X$ is \emph{sober} if the following diagram is an equalizer:
\begin{center}
\begin{tikzcd}
X\arrow[r, "\eta_X"]& 2^{2^{X}}\arrow[r, yshift=0.7ex, "2^{2^{\eta_X}}"]\arrow[r, yshift=-0.7ex, swap, "\eta_{2^{2^X}}"]& 2^{2^{2^{2^{X}}}}
\end{tikzcd}
\end{center}
Using lambda terms, $2^{2^{\eta_X}}\colon 2^{2^X}\to 2^{2^{2^{2^{X}}}}$ is the function 
\[2^{2^{\eta_X}} = \lambda F:2^{2^X}.\lambda \calF:2^{2^{2^X}}.F(\lambda x:X.\calF(\eta_X(x))),\]
and $\eta_{2^{2^X}}\colon 2^{2^X}\to 2^{2^{2^{2^{X}}}}$ is the function 
\[\eta_{2^{2^X}} = \lambda F:2^{2^X}.\lambda \calF:2^{2^{2^X}}.\calF(F).\]
Therefore, $G:2^{2^X}$ is in the equalizer if and only if 
\[\calF(G) = G(\lambda x:X.\calF(\eta_X(x)))\]
for each $\calF:2^{2^{2^X}}$. 

For the special case $G=\eta_X(y)$ for some $y\in 2^X$, we have
\begin{eqnarray*}
\eta_X(y)(\lambda x:X.\calF(\eta_X(x))) &=& (\lambda f.f(y))(\lambda x:X.\calF(\eta_X(x)))\\
&=& (\lambda x:X.\calF(\eta_X(x)))(y)\\
&=& \calF(\eta_X(y)),
\end{eqnarray*}
hence $2^{2^{\eta_X}} \circ \eta_X = \eta_{2^{2^X}}\circ \eta_X$ always holds.

\begin{theorem}
Every object of  $\ODQPol$ is sober. An object of $\CHQPol$ is sober if and only if it is zero-dimensional.
\end{theorem}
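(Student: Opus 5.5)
We must show $\eta_X$ is the equalizer of $2^{2^{\eta_X}}$ and $\eta_{2^{2^X}}$. We already know $2^{2^{\eta_X}}\circ\eta_X=\eta_{2^{2^X}}\circ\eta_X$. The plan is to identify the elements $G:2^{2^X}$ in the set-theoretic equalizer with the points of $X$, and then upgrade a bijection to an isomorphism using Corollaries~\ref{cor:bij_iso} and \ref{cor:bij_iso_ch}.

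\textbf{Step 1: elements of the equalizer are homomorphisms.} Let $G:2^{2^X}$ satisfy $\calF(G)=G(\lambda x.\calF(\eta_X(x)))$ for all $\calF$. Test with $\calF=\lambda F.F(f)\vee F(g)$ (for $f,g:2^X$), and similarly with $\wedge$, $\neg$ and the constants. This shows that $G\colon 2^X\to 2$ is a Boolean algebra homomorphism for the pointwise Boolean structure on $2^X$. This structure is exactly the one that $\calF_{\BA}$ assigns to the free algebra $(2^X,\ldots)$, although it can be checked directly. Conversely, every such homomorphism lies in the equalizer: apply Lemma~\ref{lem:induced_ultrafilters_are_points} to the free algebra $(2^{2^X},\mu_X)$, or argue directly via the disjunctive normal form representation (\ref{eqn:repset}) of $\calF$ in the $\ODQPol$ case.

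\textbf{Step 2: compare homomorphisms $2^X\to 2$ with points of $X$.} The map $\eta_X$ sends $x$ to evaluation at $x$.

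For $X$ in $\ODQPol$, $2^X$ is the compact Boolean algebra of all subsets of the countable discrete set $X$. A continuous homomorphism $G\colon 2^X\to 2$ has open preimage of $1$. By compactness and the basic opens of $2^X$, this preimage is determined by finitely many coordinates, so $G(f)=G(f')$ whenever $f,f'$ agree on some finite $S\subseteq X$. Taking $f=\chi_S$ and $f'=1$ gives $G(\chi_S)=1$. Since $\chi_S=\bigvee_{x\in S}\chi_{\{x\}}$ is a finite join, $G(\chi_{\{x\}})=1$ for some $x$, and then $G=\eta_X(x)$. Injectivity of $\eta_X$ is clear from the singletons.

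For $X$ in $\CHQPol$, $2^X$ is the overt discrete algebra of clopens. Homomorphisms $G\colon 2^X\to 2$ are ultrafilters of clopens, and by compactness each has nonempty intersection $\bigcap G^{-1}(1)$.
\begin{itemize}
\item If $X$ is zero-dimensional, clopens separate points, so this intersection is a single point $x$ and $G=\eta_X(x)$; moreover $\eta_X$ is injective.
\item Conversely, if $X$ is sober, then $\eta_X$ is a regular mono, in particular injective. Hence clopens separate points, which is condition~(i) of Lemma~\ref{lem:zdim}, after unwinding $\eta_X$ as $G_\epsilon$ and $\equiv'$ as $\calE_\Obj$. So $X$ is zero-dimensional.
\end{itemize}

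\textbf{Step 3: from bijection to equalizer.} Compute the equalizer $e\colon E\to 2^{2^X}$ via Theorem~\ref{thrm:comp_equalizers}. Then $\eta_X$ factors as $e\circ u$, and by Steps 1 and 2 the map $u$ is a computable bijection. Corollary~\ref{cor:bij_iso} (respectively Corollary~\ref{cor:bij_iso_ch}) makes $u$ an isomorphism, so $\eta_X$ is itself an equalizer.

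\textbf{Main obstacle.} I expect the hardest step to be the $\ODQPol$ half of Step 2: showing that every continuous homomorphism on the compact algebra $2^X$ is principal, i.e.\ the finite-support argument. A secondary difficulty is translating the injectivity of $\eta_X$ into condition~(i) of Lemma~\ref{lem:zdim} at the level of PERs. Along the way one must also check that the equalizer in the category agrees with the set-theoretic equalizer; this follows from the explicit PER construction of equalizers.
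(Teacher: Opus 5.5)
Your proof is correct, but it takes a more classical route than the paper. You first show that every $G$ in the equalizer is a Boolean algebra homomorphism $2^X\to 2$ for the pointwise structure. This is essentially Taylor's Corollary~4.4, which the paper only invokes later for $\pt(A)$. You then identify such homomorphisms with points by standard ultrafilter arguments: principality via finite support on the power-set algebra when $X\in\ODQPol$, and nonempty intersection of a clopen ultrafilter by compactness when $X\in\CHQPol$.

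The paper never establishes the full homomorphism property. It uses only the constant test $\calF=\lambda F.1$ to get $G(\lambda x.1)=1$, and then a single test functional tailored to $G$.
\begin{itemize}
\item In the $\ODQPol$ case this is $\calF(F)=1\iff(\exists x\in M)\,F=\eta_X(x)$. It is continuous because $2^{2^X}$ is discrete, and it picks out $G=\eta_X(x)$ in one step.
\item In the $\CHQPol$ case it is $\calF(F)=1\iff(\exists f\in S)\,G(f)\neq F(f)$, which yields $0=G(\lambda x.1)=1$. This is the synthetic form of your finite-intersection-property argument.
\item For sober $\Rightarrow$ zero-dimensional, the paper uses that $\eta_X$ embeds $X$ into the zero-dimensional $2^{2^X}$ (Lemma~\ref{lem:DX_is_zerodim}). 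You use only injectivity and unwind condition~(i) of Lemma~\ref{lem:zdim}. The cases of (i) with $\neg(x\equiv x)$ or $\neg(y\equiv y)$ hold for every $\equiv$, using only that $2^\IN$ is zero-dimensional.
\end{itemize}

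The paper's argument is shorter and stays inside the $\lambda$-calculus, with no need for concrete descriptions of $2^X$ as a power set or clopen algebra. Yours makes the link to $\pt(2^X)$ and classical Stone duality explicit. It also shows directly that in $\CHQPol$ injectivity of $\eta_X$ alone already forces zero-dimensionality.

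One slip: the converse in your Step~1 (every homomorphism lies in the equalizer) is not needed for the bijection in Step~3. If you keep it, apply Lemma~\ref{lem:induced_ultrafilters_are_points} to the algebra $(2^X,2^{\eta_X})$, not to $(2^{2^X},\mu_X)$. Its homomorphisms to $2$ are exactly the $G$ in question, and $2^{\eta_X}(\calF)=\lambda x.\calF(\eta_X(x))$ turns the lemma into the equalizer condition.
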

\begin{proof}
Assume $X\in\ODQPol_\Obj$. It is clear that $\eta_X$ is injective, hence by Corollary~\ref{cor:bij_iso} it suffices to show that every element in the equalizer of $2^{2^{\eta_X}}$ and $\eta_{2^{2^X}}$ is in the range of $\eta_X$. So assume $G:2^{2^X}$ is such that $\calF(G) = G(\lambda x:X.\calF(\eta_X(x)))$ for each $\calF:2^{2^{2^X}}$. Then 
\begin{eqnarray*}
1&=&(\lambda F:2^{2^X}.1)(G)\\
&=& G(\lambda x:X.(\lambda F:2^{2^X}.1)(\eta_X(x)))\\
&=&G(\lambda x:X.1).
\end{eqnarray*}
Since $G$ is continuous, there is a finite subset $M\subseteq X$ such that $G(f)=1$ whenever $f:2^X$ satisfies $(\forall x\in M)\, f(x)=1$. Define $\calF\colon 2^{2^X}\to 2$ as $\calF(F) = 1$ if and only if $(\exists x\in M)\, F=\eta_X(x)$. Then
\begin{eqnarray*}
\calF(G) &=& G(\lambda x:X.\calF(\eta_X(x)))\\
&=& 1,
\end{eqnarray*}
because $\calF(\eta_X(x))=1$ for each $x\in M$. Therefore, there is $x\in M$ with $G=\eta_X(x)$.

Next, assume $X\in \CHQPol_\Obj$. First assume $X$ is sober. Since $\eta_X$ is an embedding of $X$ into $2^{2^X}$ as a $\lpi 1$-subset, and $2^{2^X}$ is zero-dimensional by Lemma~\ref{lem:DX_is_zerodim}, it follows that $X$ is zero-dimensional.

Conversely, assume $X$ is zero-dimensional. It follows from part (i) of Lemma~\ref{lem:zdim} that $\eta_X(x) = \eta_X(y)$ implies $x=y$, hence $\eta_X$ is injective. Therefore, by Corollary~\ref{cor:bij_iso_ch} it suffices to show that every element in the equalizer of $2^{2^{\eta_X}}$ and $\eta_{2^{2^X}}$ is in the range of $\eta_X$. Let $G:2^{2^X}$ be such that $\calF(G) = G(\lambda x:X.\calF(\eta_X(x)))$ for each $\calF:2^{2^{2^X}}$. Assume for a contradiction that $G \not= \eta_X(x)$ for each $x\in X$. Then for each $x\in X$ there is $f:2^X$ such that $G(f) \not=\eta_X(x)(f) = f(x)$. By compactness of $X$, there is a finite subset $S\subseteq 2^X$ such that for each $x\in X$ there is $f\in S$ with $G(f) \not= f(x)$. Define $\calF:2^{2^X}\to 2$ as 
\[\calF(F)=1 \iff (\exists f\in S)\, G(f) \not=F(f).\]
Then $\calF(G)=0$ and $(\forall x\in X)\, \calF(\eta_X(x))=1$. Therefore,
\begin{eqnarray*}
0 &=& \calF(G)\\
&=& G(\lambda x:X.\calF(\eta_X(x))) \\
&=& G(\lambda x:X.(\lambda F:2^{2^X}. 1)(\eta_X(x))) \\
&=& (\lambda F:2^{2^X}. 1)G \\
&=& 1,
\end{eqnarray*}
which is a contradiction.
\qed
\end{proof}

\subsection{Spatiality}

Fix an object $(A,\alpha)$ of  $\Alg(\ODQPol)$ or $\Alg(\ZCHQPol)$. Define $\pt(A)$ to be the equalizer of the following diagram:
\begin{center}
\begin{tikzcd}
\pt(A)\arrow[r, "e_A"]&2^A \arrow[r, yshift=0.7ex, "2^\alpha"]\arrow[r, yshift=-0.7ex, swap, "\eta_{(2^A)}"]& 2^{2^{2^A}}
\end{tikzcd}
\end{center}
By Corollary~4.4 of \cite{Taylor02}, the elements of $\pt(A)$ are precisely the algebra homomorphisms $A\to 2$.

As lambda terms, we have $2^\alpha = \lambda f:2^A.\lambda F:2^{2^A}.f(\alpha(F))$ and $\eta_{2^A} = \lambda f:2^A.\lambda F:2^{2^A}. F(f)$. Therefore, if $p\in \pt(A)$ and $F\in 2^{2^A}$ then 
\begin{eqnarray*}
e_A(p)(\alpha(F)) &=& 2^\alpha(e_A(p))(F)\\
&=& \eta_{(2^A)}(e_A(p))(F)\\
&=& F(e_A(p)).
\end{eqnarray*}
Conversely, by definition of an equalizer, if $f\in 2^A$ satisfies $f(\alpha(F)) = F(f)$ for all $F\in 2^{2^A}$ then there is $p\in\pt(A)$ with $e_A(p)=f$.

Applying $2^{(-)}$ to the equalizer defining $\pt(A)$, we obtain the top row of the diagram below. 
\begin{center}
\begin{tikzcd}
2^{\pt(A)}&\arrow[l,swap, "2^{(e_A)}"]\arrow[dl,"\alpha"] 2^{2^A}& \arrow[l, yshift=0.7ex, swap,"2^{2^\alpha}"]\arrow[l, yshift=-0.7ex, "2^{\eta_{(2^A)}}"] 2^{2^{2^{2^A}}}\\
A\arrow[u,dashrightarrow,"u"]&&
\end{tikzcd}
\end{center}
Since $(A,\alpha)$ is a monad algebra it is the coequalizer of the parallel pair of arrows (see the proof of Theorem~1 \S VI.7 in \cite{M98}), hence there is a unique $u$ making the diagram above commute. We say $(A,\alpha)$ is \emph{spatial} if $u$ is an algebra isomorphism to $(2^{\pt(A)},2^{\eta_{\pt(A)}})$.

\begin{theorem}\label{thm:spatial}
Every object $(A,\alpha)$ of  $\Alg(\ODQPol)$ or $\Alg(\ZCHQPol)$ is spatial.
\end{theorem}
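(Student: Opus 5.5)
We first identify $u$ concretely. Since $u\circ\alpha = 2^{(e_A)}$, for every $a\in A$ we may write $a=\alpha(\eta_A(a))$, and then $u(a) = 2^{(e_A)}(\eta_A(a)) = \lambda p:\pt(A).\,e_A(p)(a)$. In words, $u$ sends $a$ to the evaluation map $p\mapsto p(a)$ on algebra homomorphisms $p\colon A\to 2$.

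We next show that $u$ is an algebra homomorphism from $(A,\alpha)$ to $(2^{\pt(A)},2^{\eta_{\pt(A)}})$. The cleanest route is that $2^{(e_A)}$ is itself an algebra homomorphism from the free algebra $(2^{2^A},\mu_A)$, because $2^{(e_A)}$ lies in the image of the contravariant functor. Also $\alpha\colon(2^{2^A},\mu_A)\to(A,\alpha)$ is an algebra homomorphism and is split epi via $\eta_A$. Hence $u\circ\alpha$ being a homomorphism, together with $\alpha$ being a surjective homomorphism, forces $u$ to be a homomorphism. Alternatively, one can check the homomorphism equation pointwise in the $\lambda$-calculus, using $p(\alpha(F))=F(p)$ for $p\in\pt(A)$.

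By Corollaries~\ref{cor:bij_iso} and \ref{cor:bij_iso_ch}, $u$ is computable and it suffices to show that $u$ is bijective; its inverse is then a morphism and automatically an algebra homomorphism. Throughout we identify $\pt(A)$ with the continuous Boolean algebra homomorphisms $A\to 2$ for the structure $\calF_{\BA}(A,\alpha)$ (resp. $\calG_{\BA}$). This identification uses Lemma~\ref{lem:induced_ultrafilters_are_points} in one direction, and in the other the fact that a morphism $p$ with $p(\alpha(F))=F(p)$ preserves the lifted operations, since e.g. $p(a\vee_\alpha b)=p(\alpha(\lambda f.f(a)\vee f(b)))=p(a)\vee p(b)$.

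Injectivity of $u$ is then immediate from Theorem~\ref{thm:enuf_homs}: if $a\neq b$ there is a homomorphism $h\colon A\to 2$ with $h(a)\neq h(b)$, so $h\in\pt(A)$ and $u(a)(h)\neq u(b)(h)$.

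The main obstacle is surjectivity. Given $\varphi\in 2^{\pt(A)}$, we need $a$ with $p(a)=\varphi(p)$ for all $p\in\pt(A)$. We first extend $\varphi$ to some $F\colon 2^A\to 2$ with $F\circ e_A=\varphi$, and then take $a=\alpha(F)$. This works because $p(\alpha(F))=F(p)=\varphi(p)$.

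Producing $F$ is a point-set extension problem. In the $\ODQPol$ case, $\pt(A)$ is a closed subspace of the zero-dimensional compact space $2^A$, being an equalizer in a Hausdorff space. The continuous $\varphi$ has clopen preimages $\varphi^{-1}(1)$ and $\varphi^{-1}(0)$, which are disjoint compact subsets of $2^A$. These can be separated by a clopen set of $2^A$, whose indicator function is the required $F$.

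In the $\ZCHQPol$ case, $2^A$ is discrete and $\pt(A)$ is a subspace of it; I expect $e_A$ to be an equalizer inclusion of a decidable subset. Then $F$ can be defined by $F(f)=\varphi(f)$ for $f\in\pt(A)$ and $F(f)=0$ otherwise, and this is continuous because $2^A$ is discrete. The care needed is to check that membership in $\pt(A)$ is clopen in $2^A$. This follows since $\pt(A)$ is an equalizer in $\ODQPol$, where equality is open and its complement is also open because $2^{2^{2^A}}$ is Hausdorff.

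In both cases $u$ is therefore a bijective algebra morphism, hence an isomorphism, and $(A,\alpha)$ is spatial.
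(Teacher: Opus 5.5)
Your proposal is correct and follows essentially the same route as the paper. You take $u=2^{e_A}\circ\eta_A$ and prove it is an algebra homomorphism; your argument, which really needs $2^{2^\alpha}$ to be split epi via $2^{2^{\eta_A}}$ rather than just $\alpha$ surjective, repackages the paper's diagram chase. You get injectivity from Theorem~\ref{thm:enuf_homs} and Lemma~\ref{lem:induced_ultrafilters_are_points}, and surjectivity by extending $\varphi$ to $F$ and taking $\alpha(F)$ (clopen separation in the $\ODQPol$ case, the paper's finite Boolean normal form), before concluding with Corollaries~\ref{cor:bij_iso} and \ref{cor:bij_iso_ch}; in the $\ZCHQPol$ case your extra check that $\pt(A)$ is clopen is unnecessary, since every function on the discrete space $2^A$ is already continuous.
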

\begin{proof}
Define $u = 2^{e_A} \circ \eta_A$, or as a lambda term, $u = \lambda a:A.\lambda p:\pt(A).e_A(p)(a)$. Note that $u$ is uniquely determined because if $f$ is such that $2^{e_A} = f\circ \alpha$, then $u = 2^{e_A}\circ \eta_A = f\circ\alpha\circ \eta_A = f$ (also note that the definition of $u$ comes from applying the natural bijection in Theorem~\ref{thrm:adjunction} to $e_A$). 

For $F:2^{2^A}$ we have
\begin{eqnarray*}
u(\alpha(F)) &=& \lambda p:\pt(A).e_A(p)(\alpha(F))\\
&=& \lambda p:\pt(A).F(e_A(p))\\
&=& 2^{e_A}(F),
\end{eqnarray*}
hence $u\circ \alpha = 2^{e_A}$. Also, $u$ is an algebra homomorphism because:
\begin{eqnarray*}
u\circ \alpha &=& 2^{e_A}\\
&=& 2^{e_A}\circ 2^{2^\alpha}\circ 2^{2^{\eta_A}}\text{ ($(A,\alpha)$ is an algebra)}\\
&=& 2^{e_A}\circ 2^{\eta_{(2^A)}}\circ 2^{2^{\eta_A}} \text{ ($2^\alpha\circ e_A = \eta_{(2^A)}\circ e_A$)}\\
&=& 2^{\eta_{\pt(A)}}\circ 2^{2^{2^{e_A}}}\circ 2^{2^{(\eta_A)}}\text{ ( $\eta$ is a natural transformation)}\\
&=& 2^{\eta_{\pt(A)}}\circ 2^{2^u} \text{ (definition of $u$)}.
\end{eqnarray*}

Next we show that $u$ is injective. If $a,b\in A$ are distinct, then Theorem~\ref{thm:enuf_homs} implies there is $h\colon A\to 2$ which is a continuous Boolean algebra homomorphism with respect to $(A,\vee_\alpha,\wedge_\alpha,\neg_\alpha,0_\alpha,1_\alpha)$ satisfying $h(a)\not=h(b)$. By Lemma~\ref{lem:induced_ultrafilters_are_points}, $h(\alpha(F))=F(h)$ for each $F\in 2^{2^A}$. By definition of $\pt(A)$, there is $p\in\pt(A)$ such that $e_A(p)=h$. Then 
\[u(a)(p) = e_A(p)(a)= h(a) \not= h(b) = e_A(p)(b)= u(b)(p),\]
hence $u(a)\not= u(b)$.

Next we show that $u$ is surjective. First consider the case when $A$ is an object of $\ODQPol$. Fix $f\in 2^{\pt(A)}$. Since $e_A\colon \pt(A) \to 2^A$ is a continuous embedding, the continuity of $f\colon \pt(A)\to 2$ and the compactness of $\pt(A)$ imply there is a sequence $\langle C_0, D_0\rangle, \ldots, \langle C_n, D_n\rangle$ of finite sets $C_i,D_i\subseteq A$ such that
\[f(p)=  \bigvee_{0\leq i \leq n}\left( \bigwedge_{c\in C_i} e_A(p)(c) \wedge \bigwedge_{c\in D_i} \neg e_A(p)(c) \right)\]
for each $p\in\pt(A)$. Define $F\colon 2^A \to 2$ as 
\[F(g) = \bigvee_{0\leq i \leq n}\left( \bigwedge_{c\in C_i} g(c) \wedge \bigwedge_{c\in D_i} \neg g(c) \right)\]
for each $g\in 2^A$. Then
\[ u(\alpha(F)) = \lambda p:\pt(A). F(e_A(p)) = \lambda p:\pt(A).f(p) = f, \]
hence $u$ is surjective.

Now consider the case when $A$ is an object of $\ZCHQPol$. Fix $f\in 2^{\pt(A)}$. Define $F\colon 2^A\to 2$ as 
\[F(g)=\left\{\begin{array}{ll}f(p)&\text{ if $p\in\pt(A)$ and $g=e_A(p)$}\\0 &\text{ if $g$ is not in the range of $e_A$}\end{array}\right.\]
for $g\in 2^A$. There is at most one $p\in\pt(A)$ with $g=e_A(p)$ because $e_A$ is an equalizer, so $F$ is well-defined. Also, $F$ is continuous because $2^A$ is discrete. Therefore, $F\in 2^{2^A}$ and
\begin{eqnarray*}
u(\alpha(F)) &=& \lambda p:\pt(A).F(e_A(p))\\
&=& \lambda p:\pt(A).f(p)\\
&=&f,
\end{eqnarray*}
hence $u$ is surjective.

It follows from Corollary~\ref{cor:bij_iso} and Corollary~\ref{cor:bij_iso_ch} that $u$ is a computable isomorphism. Let $v\colon 2^\pt(A) \to A$ be the inverse of $u$. It only remains to show that $v$ is a homomorphism of algebras. Using the fact that $u$ is an algebra homomorphism, we have
\begin{eqnarray*}
\alpha \circ 2^{2^v} &=& v\circ u\circ \alpha \circ 2^{2^v}\\
 &=&  v\circ 2^{\eta_{\pt(A)}}\circ 2^{2^u} \circ 2^{2^v}\\
&=&  v\circ 2^{\eta_{\pt(A)}}\circ 2^{2^{(u\circ v)}}\\
&=&  v\circ 2^{\eta_{\pt(A)}}.
\end{eqnarray*}
Therefore, $v$ is an algebra homomorphism.
\qed
\end{proof}

\subsection{The duality theorems}

Since the proof is the same for both $\ODQPol$ and $\ZCHQPol$, we explain it once using $\calC$ to denote the category $\ODQPol$ (or $\ZCHQPol$), and $\calC^*$ to denote the category $\ZCHQPol$ (or $\ODQPol$).

Define $\pt\colon \Alg(\calC) \to \calC^*$ as follows. Given an object $(A,\alpha)$ of  $\Alg(\calC)$, $\pt(A)$ (the \emph{space of points of $A$}) is defined as the equalizer of the following diagram:
\begin{center}
\begin{tikzcd}
\pt(A)\arrow[r, "e_A"]&2^A \arrow[r, yshift=0.7ex, "2^\alpha"]\arrow[r, yshift=-0.7ex, swap, "\eta_{(2^A)}"]& 2^{2^{2^A}}
\end{tikzcd}
\end{center}
To have a uniform definition of $\pt(A)$ and $e_A$, we assume they are defined by applying Theorem~\ref{thrm:comp_equalizers} to the computable maps $(A,\alpha)\mapsto 2^\alpha$ and $(A,\alpha)\mapsto \eta_{(2^A)}$. In particular, we assume that the PER defining $\pt(A)$ is the restriction of the PER defining $2^A$.

Given a homomorphism $h\colon (A,\alpha)\to (B,\beta)$, the two squares on the right of the diagram below commute because $h$ is a homomorphism and $\eta$ is a natural transformation, so there is a unique morphism $\pt(h)$ making the box on the left commute.
\begin{center}
\begin{tikzcd}
\pt(A)\arrow[r, "e_A"]& 2^A \arrow[r, yshift=0.7ex, "2^\alpha"]\arrow[r, yshift=-0.7ex, swap, "\eta_{(2^A)}"]& 2^{2^{2^A}}\\
\pt(B)\arrow[u,dashrightarrow,"\pt(h)"]\arrow[r, "e_B"]& 2^B\arrow[u, "2^h"] \arrow[r, yshift=0.7ex, "2^\beta"]\arrow[r, yshift=-0.7ex, swap, "\eta_{(2^B)}"]& 2^{2^{2^B}}\arrow[u,swap, "2^{2^{2^h}}"] 
\end{tikzcd}
\end{center}
Note that $\pt(h)$ is computable from $h$, because the graph of $\pt(h)$ is simply the restriction of the graph of $2^h$. Therefore, $\pt\colon \Alg(\calC) \to \calC^*$ is a computable contravariant functor.

The functor $2^{(-)}\colon \calC^*\to\Alg(\calC)$ is defined as mapping $X$ to the algebra $(2^X,2^{\eta_X})$, and mapping $f\colon X\to Y$ to the algebra homomorphism $2^f\colon 2^Y\to 2^X$. This is a computable contravariant functor by Lemma~\ref{lem:compcontfunctors}.

For each $X$ in $\calC^*$, if we apply $2^{(-)}$ followed by $\pt$, then because $X$ is sober there is a unique $\theta_X\colon \pt(2^X)\to X$ making the following diagram commute:
\begin{center}
\begin{tikzcd}
\pt(2^X)\arrow[d,swap,dashrightarrow,"\theta_X"]\arrow[rd,"e_{(2^X)}"]&&\\
X\arrow[r, "\eta_X"]& 2^{2^{X}}\arrow[r, yshift=0.7ex, "2^{2^{\eta_X}}"]\arrow[r, yshift=-0.7ex, swap, "\eta_{(2^{2^X})}"]& 2^{2^{2^{2^{X}}}}
\end{tikzcd}
\end{center}
Note that $\theta_X$ is an isomorphism because $\pt(2^X)$ and $X$ are both equalizers of the same diagram. Since the PER defining $\pt(2^X)$ is the restriction of the PER defining $2^{2^X}$, the graph of $\theta_X$ is the inverse of the graph of $\eta_X$. This determines a computable natural isomorphism $\theta\colon \pt\circ 2^{(-)} \to 1_{\calC^*}$.

For each $(A,\alpha)$ in $\Alg(\calC)$, if we apply $\pt$ followed by $2^{(-)}$, then because $(A,\alpha)$ is spatial there is a unique algebra isomorphism $\psi_A\colon A \to \pt(2^A)$ making the following diagram commute:
\begin{center}
\begin{tikzcd}
2^{\pt(A)}&\arrow[l,swap, "2^{(e_A)}"]\arrow[dl,"\alpha"] 2^{2^{A}}& \arrow[l, yshift=0.7ex, swap,"2^{2^\alpha}"]\arrow[l, yshift=-0.7ex, "2^{\eta_{(2^A)}}"]2^{2^{2^{2^A}}}\\
A\arrow[u,dashrightarrow,"\psi_A"]&&
\end{tikzcd}
\end{center}
It was shown in the proof of Theorem~\ref{thm:spatial} that $\psi_A = 2^{e_A}\circ \eta_A$, hence it is computable from $(A,\alpha)$. This determines a computable natural isomorphism $\psi\colon 1_{\Alg(\calC)}\to 2^{(-)}\circ \pt$.

We summarize the dualities with the following diagram and theorem, where $U$ denotes the forgetful functors (functors in the horizontal direction are contravariant, and functors in the vertical direction are covariant).

\begin{center}
\begin{tikzcd}
\ODQPol\arrow[r,swap,yshift=-0.7ex, "2^{(-)}"]\arrow[d,swap,xshift=-1ex,"2^{2^{(-)}}"]\arrow[d,phantom,swap,"\dashv"]& \Alg(\ZCHQPol)\arrow[l,swap,yshift=0.7ex, "\pt"]\arrow[d,xshift=1ex,"U"]\arrow[d,phantom,swap,"\dashv"]\\
\Alg(\ODQPol)\arrow[u,swap,xshift=1ex,"U"]\arrow[r,yshift=0.7ex, "\pt"]& \ZCHQPol\arrow[l,yshift=-0.7ex, "2^{(-)}"]\arrow[u,xshift=-1ex,"2^{2^{(-)}}"]
\end{tikzcd}
\end{center}

\begin{theorem}[Computable Stone dualities]
\noindent

1. The effective quasi-Polish categories $\Alg(\ODQPol)$ and $\ZCHQPol$ are computably dually equivalent, via the computable contravariant functors $\pt\colon \Alg(\ODQPol)\to \ZCHQPol$ and $2^{(-)}\colon \ZCHQPol\to  \Alg(\ODQPol)$, and the computable natural isomorphisms $\theta\colon \pt\circ 2^{(-)} \to 1_{\ZCHQPol}$ and $\psi \colon 1_{\Alg(\ODQPol)}\to 2^{(-)}\circ \pt$.

2. The effective quasi-Polish categories $\Alg(\ZCHQPol)$ and $\ODQPol$ are computably dually equivalent, via the computable contravariant functors $\pt\colon \Alg(\ZCHQPol)\to \ODQPol$ and $2^{(-)}\colon \ODQPol\to  \Alg(\ZCHQPol)$, and the computable natural isomorphisms $\theta\colon \pt\circ 2^{(-)} \to 1_{\ODQPol}$ and $\psi \colon 1_{\Alg(\ZCHQPol)}\to 2^{(-)}\circ \pt$.
\qed
\end{theorem}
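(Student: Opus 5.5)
The proof is assembled from the ingredients already set up in this section, and the argument is identical for the two parts. In part~1 take $\calC=\ODQPol$ and $\calC^*=\ZCHQPol$; in part~2 swap them. By definition, a computable dual equivalence consists of two computable contravariant functors together with computable natural isomorphisms between their composites and the identities. So there are four things to verify: (a) $\pt$ is a computable contravariant functor into $\calC^*$; (b) $2^{(-)}$ is a computable contravariant functor into $\Alg(\calC)$; (c) $\theta$ is a computable natural isomorphism; (d) $\psi$ is a computable natural isomorphism.

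For (a), $\pt(A)$ is computed uniformly by Theorem~\ref{thrm:comp_equalizers}, and the graph of $\pt(h)$ is the restriction of the graph of $2^h$. Functoriality then follows from the uniqueness of maps into an equalizer. We also need $\pt(A)$ to lie in $\ZCHQPol$ when $\calC=\ODQPol$. This holds because $\pt(A)$ is a closed subspace of $2^A$, which is zero-dimensional by Lemma~\ref{lem:DX_is_zerodim}. For (b), we must check that $(2^X,2^{\eta_X})$ is a monad algebra and that $2^f$ is an algebra homomorphism. In $\lambda$-terms these reduce to the standard identities $2^{\eta_X}\circ\eta_{2^X}=1$ (Theorem~\ref{thrm:adjunction}) and the associativity identity $\mu=2^{\eta_{2^X}}$. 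Computability comes from Lemma~\ref{lem:compcontfunctors}.

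For (c), the key input is that every object of $\calC^*$ is sober: all of $\ODQPol$, and the zero-dimensional objects of $\CHQPol$, by the sobriety theorem. This gives the isomorphism $\theta_X$. Its graph is the inverse of the graph of $\eta_X$ restricted to the PER of $\pt(2^X)$, so $\theta_X$ is computable uniformly in $X$. Naturality means $\theta_Y\circ\pt(2^f)=f\circ\theta_X$. After composing with the monic $\eta_Y$, this becomes the identity $e_{2^Y}\circ\pt(2^f)=2^{2^f}\circ e_{2^X}$ together with naturality of $\eta$.

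For (d), Theorem~\ref{thm:spatial} shows that $\psi_A=u=2^{e_A}\circ\eta_A$ is an algebra isomorphism, and this formula is visibly computable in $(A,\alpha)$. Naturality requires $2^{\pt(h)}\circ\psi_A=\psi_B\circ h$. Using the $\lambda$-term $u=\lambda a.\lambda p.e_A(p)(a)$, both sides evaluate at $a$ and $p$ to $e_B(p)(h(a))$, because $e_A\circ\pt(h)=2^h\circ e_B$. Finally, since a natural transformation is just a computable map $\Obj\to\Mor$, we also need the isomorphisms to lie in $\Alg(\calC)_\Mor$, which is exactly the algebra-homomorphism statement of Theorem~\ref{thm:spatial}.

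The main obstacle is bookkeeping rather than mathematics. The equalizers must be chosen uniformly, so that $\theta$ and $\psi$ are genuinely computable functions on the object spaces and not merely pointwise isomorphisms; the convention that the PER of $\pt(A)$ restricts the PER of $2^A$ is what makes this work. Naturality proofs can then be done pointwise using well-pointedness via $\calF$ and $\calG$, as in the soundness argument for the $\lambda$-calculus.
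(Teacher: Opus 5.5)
Your proposal is correct and follows the paper's own assembly. It uses uniform equalizers for $\pt$, the comparison functor $X\mapsto(2^X,2^{\eta_X})$, sobriety for $\theta$, and spatiality with $\psi_A=2^{e_A}\circ\eta_A$ for $\psi$. Your naturality checks via $e_A\circ\pt(h)=2^h\circ e_B$ just make explicit what the paper only asserts. One minor point of precision: the associativity law for $(2^X,2^{\eta_X})$ and the homomorphism property of $2^f$ actually come from naturality of $\eta$ (e.g.\ $2^{2^{\eta_X}}\circ\eta_X=\eta_{2^{2^X}}\circ\eta_X$), and the identity $\mu=2^{\eta_{2^{(-)}}}$ serves only to rewrite $\mu$.
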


\bibliographystyle{plainurl}
\bibliography{myrefs}
\end{document}